\theoremstyle{definition}
\newtheorem{theorem}{Theorem}[section]
\newtheorem{remark}[theorem]{Remark}
\newtheorem{prop}[theorem]{Proposition}
\newtheorem{corollary}[theorem]{Corollary}
\newtheorem{problem}[theorem]{Problem}
\newtheorem{example}[theorem]{Example}
\let\oldtocsection=\tocsection
\let\oldtocsubsection=\tocsubsection
\renewcommand{\tocsection}[2]{\hspace{0em}\oldtocsection{#1}{#2}}
\renewcommand{\tocsubsection}[2]{\hspace{1em}\oldtocsubsection{#1}{#2}}
\newcommand*\circled[1]{\tikz[baseline=(char.base)]{
            \node[shape=circle,draw,inner sep=2pt] (char) {#1};}}  % circled letters and numbers 
\title[Topological theories and automata]{Topological theories and automata}
\author{Mee Seong Im and Mikhail Khovanov}
 \address{Department of Mathematics, United States Naval Academy, Annapolis, MD 21402, USA}
 \email{\href{mailto:meeseongim@gmail.com}{meeseongim@gmail.com}}
 \address{Department of Mathematics, Columbia University, New York, NY 10027, USA}
 \email{\href{mailto:khovanov@math.columbia.edu}{khovanov@math.columbia.edu}}
\date{February 27, 2022}
\subjclass[2020]{Primary: 57K16, 68Q45, 18M10, 18M30;
Secondary: 06A12, 68Q70, 18B20}
\providecommand{\keywords}[1]{\textbf{\textit{Key words and phrases.}} #1}
\keywords{Regular language, automaton, topological theory, universal construction,  TQFT, Boolean semiring,  semilattice, lattice.}
\begin{document}

\def\gen{\mathsf{generators}}
\def\init{\mathsf{in}}
\def\t{\mathsf{t}}
\def\out{\mathsf{out}}
\def\I{\mathsf I}
\def\R{\mathbb R}
\def\Q{\mathbb Q}
\def\Z{\mathbb Z}
\def\mc{\mathcal{c}}
\def\N{\mathbb N} 
\def\C{\mathbb C}
\def\S{\mathbb S}
\def\SS{\mathbb S} 
\def\CP{\mathbb P}
\def\Ob{\mathsf{Ob}}
\def\op{\mathsf{op}}
\def\rat{\mathsf{rat}}
\def\rec{\mathsf{rec}}
\def\coev{\mathsf{coev}}
\def\ev{\mathsf{ev}}
\def\id{\mathsf{id}}
\def\Notation{\textsf{Notation}}
\def\circleft{\raisebox{-.18ex}{\scalebox{1}[2.25]{\rotatebox[origin=c]{180}{$\curvearrowright$}}}}
\renewcommand\SS{\ensuremath{\mathbb{S}}}
\newcommand{\kllS}{\kk\llangle  S \rrangle} %% power ser
\newcommand{\kllSS}[1]{\kk\llangle  #1 \rrangle}
\newcommand{\klS}{\kk\langle S\rangle}  % nc polynomials
\newcommand{\aver}{\mathsf{av}}  % average 
\newcommand{\ophana}{\overline{\phantom{a}}}
\newcommand{\Bool}{\mathbb{B}}
\newcommand{\dmod}{\mathsf{-mod}}
\newcommand{\pfmod}{\mathsf{-pfmod}}
\newcommand{\primitive}{\mathsf{irr}}
\newcommand{\Bmod}{\Bool\mathsf{-mod}}  % B-module 
\newcommand{\Bmodo}[1]{\Bool_{#1}\mathsf{-mod}}  
\newcommand{\Bfmod}{\Bool\mathsf{-fmod}} % finite B-modules 
\newcommand{\Bfpmod}{\Bool\mathsf{-fpmod}} % finite projective B-modules
\newcommand{\Bfsmod}{\Bool\mathsf{-}\underline{\mathsf{fmod}}}  % stable category 
\newcommand{\undvar}{\underline{\varepsilon}} %sequence of varepsilons, not using anymore
\newcommand{\undotimes}{\underline{\otimes}}
\newcommand{\sigmaacirc}{\Sigma^{\ast}_{\circ}} % equiv classes of words under rotation 
\newcommand{\cl}{\mathsf{cl}}
\newcommand{\PP}{\mathcal{P}} % powerset 
\newcommand{\wedgezero}{\{ \vee ,0\} } % semilattices 

% redefine emptyset symbol 
\let\oldemptyset\emptyset
\let\emptyset\varnothing

\newcommand{\undempty}{\underline{\emptyset}}
\def\basis{\mathsf{basis}}
\def\irr{\mathsf{irr}} % recognizable series 
\def\spanning{\mathsf{spanning}}
\def\elmt{\mathsf{elmt}}

\def\l{\lbrace}
\def\r{\rbrace}
\def\o{\otimes}
\def\lra{\longrightarrow}
\def\Ext{\mathsf{Ext}}
\def\mf{\mathfrak} 
\def\mcC{\mathcal{C}}
\def\Fr{\mathsf{Fr}}

\def\ovb{\overline{b}}
\def\tr{{\sf tr}} 
\def\det{{\sf det }} 
\def\tral{\tr_{\alpha}}
\def\one{\mathbf{1}}   % unit  object of category 

\def\lra{\longrightarrow}
\def\hooklra{\raisebox{.2ex}{$\subset$}\!\!\!\raisebox{-0.21ex}{$\longrightarrow$}}
\def\kk{\mathbf{k}}  %% base field  
\def\gdim{\mathsf{gdim}}  %% graded dimension 
\def\rk{\mathsf{rk}}
\def\undep{\underline{\epsilon}}
\def\mathM{\mathbf{M}}  % boolean matrix 

% cobordism categories 
\def\CCC{\mathcal{C}} % cat of cobordisms 

\def\complement{\mathsf{comp}}
\def\Rec{\mathsf{Rec}} % recognizable series  

\def\Cob{\mathsf{Cob}} 
\def\Kar{\mathsf{Kar}}   % Karoubi envelope 

\def\dmod{\mathsf{-mod}}   % modules  
\def\pmod{\mathsf{-pmod}}    % projective modules 

\newcommand{\alphai}{\alpha_I}  % alpha vertical
\newcommand{\alphac}{\alpha_{\circ}}  % alpha circle 
\newcommand{\alphap}{(\alphai,\alphac)} % alpha pair 

\newcommand{\brak}[1]{\ensuremath{\left\langle #1\right\rangle}}
\newcommand{\oplusop}[1]{{\mathop{\oplus}\limits_{#1}}}
\newcommand{\ang}[1]{\langle #1 \rangle } 
\newcommand{\ppartial}[1]{\frac{\partial}{\partial #1}} %partial derivative 

\newcommand{\mcA}{{\mathcal A}}
\newcommand{\cZ}{{\mathcal Z}}
\newcommand{\sq}{$\square$}
\newcommand{\bi}{\bar \imath}
\newcommand{\bj}{\bar \jmath}

\newcommand{\undn}{\mathbf{n}}
\newcommand{\undm}{\mathbf{m}}
\newcommand{\cob}{\mathsf{cob}} % cobordism 
\newcommand{\comp}{\mathsf{comp}} % complementary

\newcommand{\Aut}{\mathsf{Aut}}
\newcommand{\Hom}{\mathsf{Hom}}
\newcommand{\Ind}{\mbox{Ind}}
\newcommand{\Id}{\textsf{Id}}
\newcommand{\End}{\mathsf{End}}
\newcommand{\iHom}{\underline{\mathsf{Hom}}}

\newcommand{\drawing}[1]{
\begin{center}{\psfig{figure=fig/#1}}\end{center}}

\def\endomCempt{\End_{\mcC}(\emptyset_{n-1})}

\begin{abstract} The paper explains the connection between  topological theories for one-manifolds with defects and values in the Boolean semiring and automata and their  generalizations.  Finite state automata are closely related to regular languages. To each pair of a regular language and a circular regular language we associate a  topological theory for one-dimensional manifolds with zero-dimensional defects labelled by letters of the language. This theory takes values in the Boolean semiring. Universal construction of topological theories gives rise in this case to a monoidal category of Boolean semilinear combinations of  one-dimensional cobordisms with defects modulo skein relations. 
The latter category can be interpreted as a semilinear rigid monoidal closure of standard structures associated to a regular language, including minimal deterministic and nondeterministic finite state automata for the language and the syntactic monoid. The circular language plays the role of a regularizer, allowing to define the rigid closure of these structures. When the state space of a single point for a regular language describes a distributive lattice, there is a unique associated circular language such that the resulting theory is a Boolean  TQFT. 

\end{abstract}

\maketitle
\tableofcontents

%%%%%%%%%%%%%%%%%%%%%%%
%
%  Intro  
%
%%%%%%%%%%%%%%%%%%%%%%%

\section{Introduction}

\subsection{Universal construction}
In the \emph{universal construction} approach to topological theories~\cite{BHMV,Kh1,RW1,Kh2}, one
is given  an \emph{evaluation} $\alpha$ of closed $n$-dimensional objects $M$, such as $n$-manifolds, taking values in a commutative ground ring $R$ (often a field), 
\[
\alpha \ : \ \mathrm{closed} \ n\mathrm{-dimensional\ objects}\  \lra \ R.
\] 
 The $n$-dimensional objects  may be manifolds, manifolds with boundary or decorations (defects), embedded  manifolds or foams, etc. 
 
Map $\alpha$ is multiplicative on the disjoint union of objects, takes the empty $n$-dimensional manifold $\emptyset_n$ to $1$, and depends only on the isomorphism (diffeomorphism or homeomorphism) class of a manifold:
\begin{eqnarray}
     \alpha(M_1\sqcup M_2) & = &  \alpha(M_1)\alpha(M_2), \label{eq_1}\\
     \alpha(\emptyset_n) & = & 1, \label{eq_0_1}\\
     \alpha(M_1) & = & \alpha(M_2)\label{eq_2}\  \textrm{if}\ M_1\cong M_2. 
\end{eqnarray}
Condition \eqref{eq_0_1} is imposed to avoid degenerate maps, for instance taking all $M$ to $0$. 

From $\alpha$ one can define  state spaces $A(N)$ for $(n-1)$-dimensional objects $N$, by starting with a free $R$-module $\Fr(N)$ with a basis $\{[M]\}_{\partial M \cong N}$ given by formal symbols $[M]$ of all $n$-dimensional objects $M$ which have $N$ as boundary (with a fixed diffeomorphism $\partial M \cong N$).

On $\Fr(N)$ introduce a  bilinear pairing $(\hspace{1mm},\hspace{1mm})_N$ given on basis elements $[M_1],[M_2]$ with $\partial M_1 \cong N \cong \partial M_2$ by 
 coupling $M_1,M_2$ along the  boundary and evaluating  the resulting  closed object $M_1 \cup_N M_2$ via $\alpha$: 
 \[ ([M_1], [M_2])_N \ := \alpha(M_1 \cup_N M_2). 
 \] 
 Now define the state space $A(N)$ as the quotient of $\Fr(N)$ by the kernel of this bilinear form,
 \[
 A(N) \ := \ \Fr(N)/\ker((\hspace{1mm},\hspace{1mm})_N). 
 \]
 If keeping track of orientations, one would instead use $(-M_1) \cup_N M_2$, 
 introduce an involution $\overline{\phantom{a}}$ on $R$ and require 
 \begin{equation}
 \alpha(-M)=\overline{\alpha(M)}. \label{eq_3}
 \end{equation}  
 
 Examples of universal construction include 
 \begin{itemize}
 \item Original paper by Blanchet, Habegger, Masbaum and Vogel~\cite{BHMV}, who introduced that notion and considered the universal construction for the Witten--Reshetikhin--Turaev $SU(2)$ 3-manifold invariant.  Universal construction  was mentioned at about the same time by Kapranov~\cite[Section 2.5]{Ka}. 
 \item 
 The universal pairing theory of Freedman, Kitaev, Nayak, Slingerland, Walker and Wang~\cite{FKNSWW}, also see~\cite{CFW,W}, which constitutes a generic case of the universal construction. In the Freedman et al. universal pairing the ring $R$ is generated by closed $n$-dimensional manifolds, subject to the multiplicativity and isomorphism property above, orientation compatibility and no other relations. In this sense, the ring is the largest possible that one can build out of the given collection of objects. 
 \item Many combinatorially constructed link homology theories are based on state spaces or homology of planar graphs. One way to build these state spaces is via foam evaluation, see~\cite{Kh1,MV,EST,RW1,RW2,KK,AK}. 
 \item Some other examples of the  universal construction  in dimensions $1$ and $2$ were recently considered in~\cite{Kh2,KS3,Kh3,KKO,KQR,KL,IK,IZ}. Meir~\cite{Me} placed the universal construction in the framework of interpolation of monoidal categories. 
 \item 
 The present paper came out of extending~\cite{Kh3} to one-dimensional theories with values in the Boolean semifield $\Bool$ (rather than in a field $\kk$, as in~\cite{Kh3}).  
 \end{itemize} 
  Vector spaces or $R$-modules $A(N)$  that one  assigns to $(n-1)$-dimensional  objects in universal constructions usually fail  the Atiyah tensor product axiom $A(N_1\sqcup N_2)\cong A(N_1)\otimes A(N_2)$, see~\cite{A}. Instead, there is a homomorphism (injective when $R$ is a field)
\[
A(N_1)\otimes A(N_2) \lra   
A(N_1\sqcup N_2), 
\]
giving a sort of a lax  tensor structure on $A$.  

 \vspace{0.1in} 
 
 %%%%%%%%%%%%%%%%%%
 % Rigid symmetric categories 
 %%%%%%%%%%%%%%%%%%%
 
 \subsection
 {Universal construction and rigid symmetric categories}
 \label{subsec_rigid} 
 
 Following~\cite{Me,KKO}, it is clear that the universal construction can be done in any \emph{rigid symmetric monoidal category}. A symmetric monoidal category $\CCC$ is rigid~\cite{Mug} if for any object $X$ there is the ``dual" object $X^{\ast}$ together with morphisms 
 \begin{equation} \ev_X \ : \ X^{\ast}\otimes X\lra \one, \ \ 
 \coev_X \ : \ \one \lra X \otimes X^{\ast}
 \end{equation}  
 that induce adjunction isomorphisms 
 \[ \Hom_{\CCC}(X^{\ast} \otimes Y,Z)\cong \Hom_{\CCC} (Y, X \otimes Z) 
 \] 
 for $Y,Z\in \Ob(\CCC)$. That is, compositions
\[ 
\xymatrix@-0.5pc{
 X \ar[rr]^-{\coev_X\otimes \id} & &  
 (X \otimes X^{\ast}) \otimes X \cong X \otimes (X^{\ast}\otimes X) \ar[rr]^-{\id \otimes \ev_X} & & X,  \\ 
 X^{\ast} \ar[rr]^-{\id \otimes \coev_{X}} & &  X^{\ast} \otimes (X\otimes X^{\ast}) \cong (X^{\ast} \otimes X)\otimes X^{\ast} \ar[rr]^-{\ev_{X}\otimes \id } & &  X^{\ast} 
}
\] 
 are identities. These morphisms can be visualized as a ``cap" and a ``cup", see Figure~\ref{cupcap} and relations on them are the isotopy relations for arcs in the plane.  
 
 \vspace{0.1in} 
 
\input{cupcap}

% {\bf Cup and cap picture. 
% Relations on cups and caps.}
 
  Permutation morphism $X\otimes X^{\ast}\lra X^{\ast}\otimes X$, 
 which is part of the symmetric structure of $\CCC$ and denoted by a crossing of suitably oriented strands labelled $X$ and $X^{\ast}$, allows to define ``cap" $\ev'_X:X\otimes X^*\lra \one$  and ``cup" $\coev'_X:\one \lra X^*\otimes X$ for oppositely oriented arcs, see Figure~\ref{permutation-op-cupcap}.

\input{permutation-op-cupcap} 
 
 Cups and caps can then be used to define the dual $f^{\ast}:Y^{\ast}\lra X^{\ast}$ of a morphism $f:X\lra Y$, see Figure~\ref{closing-dual} on the right. 
 
\input{closing-dual}
 
 Informally, a rigid structure on $\CCC$ allows to ``bend" the morphisms any way we want. From this viewpoint, it would have been more natural to christen these \emph{flexible} categories rather than \emph{rigid} categories, since in these categories the morphisms are fully bendable. 
 
 In particular, a 
 rigid structure on $\CCC$ allows to extend any morphism $f:X\lra Y$ to many endomorphisms of the identity object $\one$ of $\CCC$, by closing up $f$ with any morphism $g:X^{\ast}\lra Y^{\ast}$ via a cup for $X$ and a cap for $Y^{\ast}$, see Figure~\ref{closing-dual} left, where this pairing between $f$ and $g$ is denoted 
 \begin{equation}
 \cl(f,g)\ := \ \ev'_Y\circ (f\otimes g) \circ \coev_X.
 \end{equation} 
 
 \vspace{0.05in} 
 
 \emph{Remark:} A rigid symmetric monoidal category is also called a \emph{compact closed category}  and a \emph{symmetric autonomous category}~\cite{KL80,Se}.

 \vspace{0.1in} 
 
 Let $\CCC$ be a rigid symmetric category. For the following construction it's best to assume that $\CCC$ is  ``set-theoretical" or ``discrete", so that homs between objects don't come with an extra structure, such as that of an abelian group, a module, a topological space, etc.  Consider the commutative monoid $\End_{\CCC}(\one)$ of endomorphisms of the identity object and choose a multiplicative homomorphism $\alpha:\End_{\CCC}(\one)\lra R$ 
 of this monoid into a commutative ring $R$ or even a commutative semiring (see Section~\ref{subsection:semiring-valued-univ-constr} for the definition of a semiring).  
 
 Form the $R$-linear closure $\CCC_R$ of $\CCC$ by extending morphisms to be finite linear combinations of morphisms in $\CCC$ with coefficients in $R$. Category $\CCC_R$ has the same objects of $\CCC$ and $\Hom_{\CCC_R}(X,Y)$ is $R^{\Hom_{\CCC}(X,Y)}$, the free $R$-module (or free $R$-semimodule) with basis $\Hom_{\CCC}(X,Y)$. Composition in $\CCC_R$ is extended from that in $\CCC$ by $R$-bilinearity.
 Category $\CCC_R$ is a rigid symmetric monoidal category. 
 
 Define the quotient category $\CCC_{\alpha}$ of $\CCC_R$ as follows. It has the same objects as $\CCC$ and $\CCC_R$. Introduce an equivalence relation $\sim_{\alpha}$ on  morphisms $f_1,f_2:X\lra Y$ in $\CCC_R$: 
 \begin{equation}\label{eq_sim_alpha} f_1 \sim_{\alpha} f_2 \ \ \mathrm{if} \ \  
 \alpha(\cl(f_1,g))=\alpha(\cl(f_2,g)) \ \ \forall g \in \Hom_{\CCC_R}(X^{\ast}, Y^{\ast}).
 \end{equation}
 In other words, for any $g$ as above, closures of $(f_1,g)$ and $(f_2,g)$ to endomorphisms of $\one$ evaluate to the same element of $R$ under $\alpha$,  see Figure~\ref{closed-evaluation}.
 Notice that it's enough to check the condition above for $g\in \Hom_{\CCC}(X^{\ast},Y^{\ast})$ rather than for $g$ in the larger category $\CCC_R$. 
 
 \vspace{0.1in} 

\input{closed-evaluation}

 Define the category $\CCC_{\alpha}$ as the quotient of $\CCC_R$ by the above equivalence relation. We may call $\CCC_{\alpha}$ the \emph{gligible quotient}, following the terminology in~\cite{KKO,Kh3}. 
 
 For each object $X\in \Ob(\CCC_{\alpha})$ define its state space 
 \begin{equation}\label{eq_A_space}  A_{\alpha}(X):=\Hom_{\CCC_{\alpha}}(\one,X)
 \end{equation} 
 as homs from the identity object to $X$. One can informally think of $A_{\alpha}(X)$ as diagrams (or their $R$-linear combinations) with top boundary $X$ and empty bottom boundary. There is a nondegenerate pairing 
 \begin{equation} \label{eq_A_pairing}   A_{\alpha}(X)\otimes_R A_{\alpha}(X^{\ast})\lra R,\ \    (f,g) \mapsto \alpha(\ev'_X\circ (f\otimes g) ),  
 \end{equation}
 depicted in Figure~\ref{emptybottom-ev}. It is nondegenerate in the sense that if $(f_1,g)=(f_2,g)$ for all $g\in A_{\alpha}(X^{\ast})$ then $f_1=f_2\in A_{\alpha}(X)$, and likewise for the other side of the pairing. 
 
\input{emptybottom-ev}
 
 \vspace{0.05in} 
 
 The quotient relation \eqref{eq_sim_alpha} above is similar to the quotient of a rigid symmetric category by negligible morphisms. If $R$ is a ring, we can instead define the ideal $I_{\alpha}$ which consists of morphisms $f$ such that closing with any $g$ evaluates to $0$ under $\alpha$. Such morphisms are called \emph{negligible}.  Then we quotient preadditive category $\CCC_R$ by that ideal. If $R$ is only a (commutative) semiring, one has to use the above equivalence relation instead. Categories $\CCC_R$ and $\CCC_{\alpha}$ are then \emph{presemiadditive} rather than preadditive.  
 
 \vspace{0.05in} 
 
 With the category $\CCC_{\alpha}$ at hand, one can form related categories, such the additive closure of $\CCC_{\alpha}$, by allowing finite direct sum of objects, the Karoubi closure, by further adding objects for idempotents and so on, as explained in~\cite{KS3} and further considered in~\cite{KQR,KKO,Kh3}.

\begin{remark}
In symmetric monoidal categories, rigidity is simplified compared to just monoidal categories. Universal construction can also be done in rigid monoidal categories that are not necessarily symmetric, see~\cite{KL} for examples. 
\end{remark} 
 
 Categories of (decorated) cobordisms provide examples of rigid symmetric monoidal categories. 
 Assume that we are given a category $\CCC$ of  $n$-dimensional cobordisms between $(n-1)$-dimensional objects. These objects may be manifolds, embedded or decorated manifolds, CW-complexes, and so on. This category is monoidal and often symmetric monoidal, with the tensor product denoted by $\sqcup$. Usually the tensor product is given directly by the disjoint union of $(n-1)$-dimensional objects, respectively $n$-dimensional cobordisms. The empty object is denoted $\emptyset_{n-1}$. The empty morphism (the identity endomorphism of $\emptyset_{n-1}$) is denoted $\emptyset_n$.
 In this setup two cobordisms that are homeomorphic (or diffeomorphic) relative to (rel) the boundary define the same morphism in the category since boundary is fixed during isotopy. 
 
 %%%%%%%%%%%%%%%%%%%
 % Semiring valued 
 %%%%%%%%%%%%%%%%%%%
 
 \subsection{Semiring-valued universal construction}
 \label{subsection:semiring-valued-univ-constr}
 In this paper we discuss topological theories when $\alpha$ takes values in a commutative semiring rather than a ring. A commutative semiring $R$ contains distinguished elements $0,1$ and two binary operations $(+,\cdot)$. It's an abelian monoid under each of these operations, with the unit element $0$, respectively $1$, and distributivity holds: $a(b+c)=ab+ac$. The axioms are weaker than those for a commutative ring, due to lack of subtraction: $a-b$ is not defined, in general. Any (commutative) ring is a (commutative) semiring, but not vice versa. An important example is the Boolean semiring $\Bool = \{0,1|1+1=1\}$ consisting of two elements. 
 
 We may equip $R$ with an involution $\overline{\phantom{a}}\ $ if our manifolds or other $n$-dimensional objects are oriented and require $\alpha(-M)=\overline{\alpha(M)}$, but in the present paper we omit this condition. Consequently, our bilinear form $(\hspace{1mm},\hspace{1mm})_N$ (or semibilinear form, if one prefers) is not symmetric, in general. 
   
 Recall that in our categories $\CCC$ of $n$-dimensional cobordisms the identity object $\one$ of $\CCC$ is the empty $(n-1)$-manifold denoted $\emptyset_{n-1}$. The identity endomorphism $\id_{\one}$ of $\one$ is the empty $n$-manifold, denoted $\emptyset_n$. We record this correspondence: 
 \begin{equation}
     \one  \ = \  \emptyset_{n-1}, \ \ \  \ 
     \id_{\one} \ = \ \emptyset_{n}.
 \end{equation}
 Endomorphisms $\End_{\CCC}(\one)=\endomCempt$ of $\one=\emptyset_{n-1}$ are called \emph{closed $n$-objects}
 (or closed cobordisms or closed $n$-manifolds). 
 Assume that we are given a function 
 \begin{equation}
     \alpha \  : \ \endomCempt \lra R 
 \end{equation}
 into a commutative semiring $R$ subject to conditions \eqref{eq_1}-\eqref{eq_2} above, and condition \eqref{eq_3} in the oriented case. For convenience, we list these conditions together below. 
\begin{eqnarray}
     \alpha(M_1\sqcup M_2) & = &  \alpha(M_1)\alpha(M_2), \label{eq1_1}\\
     \alpha(\emptyset_n) & = & 1, \label{eq1_2} \\
     \alpha(M_1) & = & \alpha(M_2)\label{eq1_3}\  \textrm{if}\ M_1\cong M_2, \\
     \alpha(-M) & = & \overline{\alpha(M)}\ \ (\textrm{oriented \ case\ with \ involution}). \label{eq1_4}  
\end{eqnarray}
Note that \eqref{eq1_3} is usually built into our definition of category $\CCC$, since diffeomorphic closed  $n$-manifolds $M_1, M_2$ define the same endomorphism of $\emptyset_{n-1}$. 
 
A semimodule $M$ over a commutative semiring $R$ is an abelian semigroup $(M,+)$ with $0$ as the unit element and a map $R\times M \lra M$ satisfying 
\begin{itemize}
\item $1m=m$ and $0m=0$ for $m\in M$, 
\item $a0=0$ for $a\in R$, 
\item $a(bm)= (ab)m$ for $a,b\in R$ and $m\in M$,
\item $a(m+n)=am+an$ and $(a+b)m=am+bm$ for $a,b\in R$ and $m,n\in M$,
\end{itemize}
see~\cite{G}, for instance. 
For each set $S$, there's the free semimodule $R^S$ of maps from $S$ to $R$. Denote by $R^m$ the standard free semimodule on $m$ generators. A semimodule $M$ is called finitely-generated if there exists a surjective semimodule homomorphism $R^m\lra M$ for some $m$. 

For an object $N$ of $\CCC$ denote by $\Fr(N)$ the free semimodule $R^{\Hom_{\CCC}(\emptyset_{n-1},N)}$ on the set of morphisms from the empty $(n-1)$-manifold $\emptyset_{n-1}$ to $N$. We think of elements of $\Hom_{\CCC}(\emptyset_{n-1},N)$ as ``$n$-manifolds with boundary $N$". Denote by $[M]$ the generator of $\Fr(N)$ associated to a manifold $M$. An element of $\Fr(N)$ has a presentation as a finite sum
\[ \sum_M  a_M [M] , \ \ a_M\in R,
\] 
over finitely many $M\in \Hom_{\CCC}(\emptyset_{n-1},N) $. This presentation is unique if we require that $a_M$'s are not zero (and $0\in \Fr(N)$ corresponds to the empty sum). 
 
Define a (semi)bilinear pairing 
\[  (\hspace{1mm},\hspace{1mm})_N \ : \ \Fr(N) \times \Fr(N) \lra R 
\]
via its values on basis elements
\[ ([M_1],[M_2])_N = \alpha(M_1^{\ast}M_2) \in R
\] 
and extending via semilinearity: 
\[ (\sum_i a_i[M_i],\sum_j b_j [M'_j] )_N = \sum_{i,j} a_i b_j ([M_i],[M'_j])_N = \sum_{i,j} a_i b_j \ 
\alpha(M_i^{\ast}M_j).  \] 
This (semi)bilinear form is symmetric. 

Define the state space $\alpha(N)$ as the quotient of $\Fr(N)$ by the kernel of the form. Working over the semiring and in the absence of subtraction we need to do that carefully. Define an equivalence relation on $\Fr(N)$ by
\[ x\sim y \ \ \mathrm{if} \ \ (x,z)=(y,z) \ \forall z\in \Fr(N), 
\] 
and define the state space as the quotient of $\Fr(N)$ by this equivalence relation, 
\[  \alpha(N) \ := \ \Fr(N)/\sim. 
\] 
The state space is naturally a semimodule over $R$. The semimodule assigned to the empty object is naturally isomorphic to $R$: 
\[ \alpha(\emptyset_{n-1}) \cong R [\emptyset_n]\cong R,
\]
with the generator being the symbol of the empty $n$-manifold; that is the identity morphism on the identity object. 

This construction extends to a functor from $\CCC$ to $R\dmod$, the category of $R$-semimodules. A cobordism $M$ with $\partial M = N_1\sqcup (-N_0)$ induces a natural map of state spaces
\begin{equation}\label{alpha_cob} \alpha(M) \ : \ \alpha(N_0)\lra \alpha(N_1) 
\end{equation}  
defined on symbols by 
\[  \alpha(M)([M_0]) = [M M_0], \ \ \mathrm{for} \  \partial(M_0)=N_0. 
\] 
These maps are compatible with the composition of cobordisms: if composition $MM'$ is defined then 
\[  \alpha(MM') = \alpha(M)\circ \alpha(M'),
\] 
extending $\alpha$ to a functor \[ \alpha \ : \ \CCC \lra R\dmod
\] 
that associates $R$-semimodule $\alpha(N)$ to object $N$ of $\CCC$ and semimodule homomorphism $\alpha(M)$ in  \eqref{alpha_cob} to a cobordism $M$. One usually wants to study corresponding functors for evaluations $\alpha$ with state spaces $\alpha(N)$ sufficiently small, for instance, requiring that $\alpha(N)$ is a finitely-generated $R$-semimodule for any $(n-1)$-manifold $N$. 

%%%%%%%%%%%%%%%%%%%%%%%%%%%%
% Summary 
%%%%%%%%%%%%%%%%%%%%%%%%%%%%

\subsection{Summary} 

In the present paper we concentrate on the case $n=1$ and evaluations $\alpha$ taking values in the Boolean semiring $\Bool$. Our one-manifolds are oriented and come with $0$-dimensional defects (points on one-manifolds) labelled by elements of a finite set $\Sigma$ (set of letters). We allow one-manifolds with boundary. 
Correspondingly, one-dimensional cobordisms may have components (intervals) with one of both endpoints strictly inside the cobordism (\emph{floating} boundary points). We relate resulting one-dimensional $\Bool$-valued topological theories with $0$-dimensional defects to regular languages and finite state automata. 

\vspace{0.05in} 

In Section~\ref{sec_regular} we review the classical notions of a regular language, deterministic and non-deterministic finite automata, and also consider the notion of \emph{circular automata}, for cyclicly invariant languages (\emph{circular languages}). Such languages naturally come out from our interpretation of languages as $\Bool$-valued evaluations of one-dimensional manifolds with defects.  Evaluations of intervals with defects correspond to languages, while those of circles with defects to \emph{circular languages}. Evaluation $\alpha$ of arbitrary decorated one-manifolds requires a pair of languages: language $L_I$ for evaluation of decorated intervals (the \emph{interval language} of $\alpha$) and a \emph{circular} language  $L_{\circ}$ for evaluation of decorated circles (the \emph{circle language} of $\alpha$),
\begin{equation}  \label{eq_ev_alpha} 
    \alpha \ = \ (\alpha_I,\alpha_{\circ}) \ = \ (L_I,L_{\circ}). 
\end{equation}
Here $L_I\subset \Sigma^{\ast}$ denotes a language, that is, a subset of the set of all words 
\begin{equation}
\Sigma^{\ast} \ := \ \sqcup_{n\ge 0} \{ a_1\cdots a_n \, | \, a_i\in \Sigma \} 
\end{equation}
in the alphabet $\Sigma$. Languages are in a bijection with Boolean functions $\alpha_I:\Sigma^{\ast}\lra \Bool$, where $L_I=\alpha_I^{-1}(1)$. Likewise, $L_{\circ}=\alpha_{\circ}^{-1}(1)$ denotes a circular language, for a map $\alpha_{\circ}: \Sigma^{\ast}_{\circ}\lra \Bool$. 
If a circular language contains a word $\omega$, it then contains all its cyclic rotations $\omega_2\omega_1$ (for $\omega=\omega_1\omega_2$), and the set $\Sigma^{\ast}_{\circ}=\Sigma^{\ast}/\sim $ consists of equivalence classes under this relation. 

\vspace{0.05in} 

Section~\ref{sec_oned} describes $\Bool$-valued one-dimensional topological theories with defects. Such a theory corresponds to evaluation $\alpha$ as in \eqref{eq_ev_alpha}. To $\alpha$ we assign a rigid symmetric monoidal category $\CCC_{\alpha}$ where hom spaces between objects are $\Bool$-semimodules. Objects are oriented 0-manifolds, encoded by their sign sequences $\varepsilon=(\varepsilon_1,\ldots, \varepsilon_k),$ $\varepsilon_i\in \{+,-\}$. Hom spaces are $\Bool$-semilinear combinations of $\Sigma$-decorated oriented 1-cobordisms, modulo relations coming from the universal construction for $\alpha$. 

We say that a theory $\alpha$ is \emph{rational} or \emph{finite} if hom spaces in $\CCC_{\alpha}$ are finitely-generated (equivalently, finite) $\Bool$-semimodules. We observe in Proposition~\ref{prop_when_regular}  that $\alpha$ is rational if and only if both languages $L_I$ and $L_{\circ}$ are regular, that is, described by finite state automata. 

\vspace{0.05in}

Section~\ref{subsec_semiduality} contains a review of $\Bool$-semimodules. $\Bool$-semimodule structure is equivalent to that of a semilattice with the $0$ element. Finite $\Bool$-semimodules correspond to finite semilattices, and the latter can be enhanced to finite  lattices, in a unique way. Finite projective $\Bool$-semimodule correspond to distributive 
 lattices. We observe in Proposition~\ref{prop_proj_rigid} that finite projective $\Bool$-semimodules and semimodule maps constitute a rigid symmetric monoidal category. In the diagrammatical interpretation, the identity morphism of any object can bend arbitrarily, with isotopy relations in Figure~\ref{cupcap} satisfied.  

\vspace{0.05in} 

Each evaluation $\alpha$ corresponds to a pair of languages: evaluation $\alpha_I$ of words on an interval encodes a language $L_I$, and  evaluation $\alpha_{\circ}$ of words on a circle encodes a circular language $L_{\circ}$. 
State spaces $A(+),A(-)$ of a single point carry information only about the interval language $L_I$. The state space $A(+-)$ keeps track of both languages (interval and circular languages) of $\alpha$ and their interactions. 

\vspace{0.05in} 

Section~\ref{sec_sspaces} considers state spaces $A(+), A(-), A(+-)$ of 0-manifolds which are a single point (with $+$ or $-$  orientation) or a pair of points for these topological theories. We explain how to extract the minimal \emph{deterministic} finite automaton (DFA) for a language from the state space $A(-)$  of a single point in the corresponding theory and the minimal DFA for the opposite language from $A(+)$. Taking the minimal free cover $\Bool^J\lra A(-)$ of the  $\Bool$-semimodule $A(-)$ together with a lifting of the action of the free word monoid $\Sigma^{\ast}$ from $A(-)$ to $\Bool^J$ describes all minimal \emph{non-deterministic} finite automata (NFA) for the language. States of a minimal non-deterministic automaton are enumerated by elements of $J$ or, equivalently, by the irreducible (indecomposable) elements of the $\Bool$-semimodule $A(-)$. 

Semimodules $A(+)$ and $A(-)$ are dual. More generally, $A(\varepsilon)$ and $A(\varepsilon^{\ast})$ are dual semimodules, where $\varepsilon^{\ast}=(\varepsilon_k^{\ast},\ldots, \varepsilon_1^{\ast})$ is the dual sequence ($+^{\ast}=-, -^{\ast}=+$). 

\vspace{0.05in} 

If $A(-)$ is a projective semimodule, there is a unique circular language $L_{\circ}$ build from $L_I$ with the property that the inclusion $A(+)
\otimes A(-)\subset A(+-)$ is an isomorphism of semimodules. Circular language $L_{\circ}$ is defined via the identity decomposition for $L_I$, possible exactly when $A(-)$ is projective. 

For such a pair $(L_I,L_{\circ})$ inclusions $A(\varepsilon)\otimes A(\varepsilon')\subset A(\varepsilon\varepsilon')$ are isomorphisms for all sign sequences $\varepsilon,\varepsilon'$, and the category $\CCC_{\alpha}$ gives rise to a $\Bool$-valued TQFT in the sense of Atiyah, see Section~\ref{subsec_decomp}. Such a structure exists if and only if $A(-)$ is a projective semimodule and $L_{\circ}$ is then determined by $L_I$,  so that $\CCC_{\alpha}$ is a TQFT. A semimodule is projective if and only if the lattice constructed from $A(-)$ is a \emph{distributive} lattice. A finite distributive lattice is isomorphic to the lattice of open sets of a finite topological space. 

\vspace{0.05in} 

Going back to the general case or regular languages $L_I,L_{\circ}$ and associated evaluation $\alpha$, 
the circular language $L_{\circ}$ can be thought of as a way to regularize an interval language $L_I$ and build a rigid symmetric monoidal category $\CCC_{\alpha}$ from the pair $\alpha=(\alpha_I,\alpha_{\circ})$.

\vspace{0.05in} 

In Section~\ref{sec_semimod_automata} we 
introduce a diagrammatic rigid monoidal category of decorated 1-cobordisms associated to a pairing between a Boolean semimodule $M$ and its dual. Cobordisms are decorated only at the endpoints that are strictly inside the cobordism (floating endpoints). Depending on orientation, one labels these endpoints by generators of $M$ or $M^{\ast}$, and such decorated interval is evaluated via the pairing. There are no dots on intervals or circles and evaluation of the circle is an additional Boolean-valued parameter. The resulting category can be thought of as a rigid monoidal closure of the pairing between $M$ and its dual. In the same section we combine this construction with  evaluations of the earlier type, when dots are allowed, corresponding to languages and circular languages.  A special case of this construction, considered in Section~\ref{subsec_measuring}, leads to the notion of a ``distance" between two regular languages that measures how much bigger the state space $A(-)$ is in the monoidal category that keeps track of both languages versus a category for a single language.

\vspace{0.05in} 

Section~\ref{section:examples} contains a number of examples to illustrate various constructions and results of the paper. 

\vspace{0.1in} 

{\bf Acknowledgments.} M.S.I. was partially supported by AMS-Simons Travel Grant and U.S. Naval Academy. M.K. was partially supported by NSF grant DMS-1807425. 
The authors are grateful to Kirill Bogdanov for interesting discussions and to the program Braids in Representation Theory and Algebraic Combinatorics at ICERM at Brown University for conducive working environment. 

\vspace{0.1in}
%%%%%%%%%%%%%%%%%%%
% Regular languages 
%%%%%%%%%%%%%%%%%%%

\section{Regular languages, finite state automata and circular automata}
\label{sec_regular} 

This section contains a brief review of regular languages and finite state automata, including deterministic finite automata (DFA) and nondeterministic finite automata (NFA). We also suggest how to define \emph{circular automata} that accept regular \emph{circular languages}. 

%%%%%%%%%%%%%%%%%%%%%
% Regular languages 
%%%%%%%%%%%%%%%%%%%%%

\subsection{Regular languages}
Denote by $\Sigma$ a finite set (the set of \emph{letters}) and by $\Sigma^{\ast}=\emptyset\sqcup \Sigma \sqcup \Sigma^2\sqcup \ldots$ the set of finite sequences of elements of $\Sigma$, that is, the set of \emph{words} in $\Sigma$. A subset $L\subset \Sigma^{\ast}$  is called a \emph{language}. 

A language $L$ correspond to a map $\alpha: \Sigma^{\ast}\lra \Bool$, with $L=\alpha^{-1}(1)$. Here 
\[
\Bool \ = \  \{0,1|1+1=1\}
\] is the Boolean semiring. We can formally write 
\[ L\ = \ \sum_{\omega\in L} \omega
\] 
and think of this sum as formal noncommutative power series, with $\Sigma$ the set of variables, and with coefficients in the Boolean semiring $\Bool$. Terms with coefficient $0$, that is, terms not in $L$, are omitted. 

A language $L$ is called \emph{finite} if $|L|<\infty$. A language is called \emph{regular} if it can be inductively obtained as follows:
\begin{itemize}
    \item A finite language is regular. 
    \item Union $L\cup L'$ and ordered product  $L L'$ of regular languages is regular.
  %  \item The complement $\Sigma^{\ast}\setminus L$ of a regular language is regular.
    \item The star closure $L^{\ast}=\emptyset \sqcup L \sqcup L^2 \sqcup \ldots$ of a regular language is regular. 
\end{itemize}

A language $L$ is regular if and only if the opposite language $L^{\op}$ is regular (the latter is given by reading all words in $L$ in the opposite direction). 

Introduce an equivalence $\sim_L$ on $\Sigma^{\ast}$ by $\omega\sim_L \omega'$ if and only if for any $x,y\in \Sigma^{\ast}$ words $x\omega y$ and $x\omega'y$ are either both in $L$ or both not in $L$. The set of equivalence classes $E_L = \Sigma^{\ast}/\sim_L$ is naturally a monoid under composition $\omega_1 \ast \omega_2 = \omega_1\omega_2$. The identity of $E_L$ is given by the equivalence class of the empty sequence $\emptyset$. Monoid $E_L$ is called the \emph{syntactic monoid} of $L$.

The following proposition is well-known~\cite[Proposition 10.1]{E1}.  

\begin{prop} Monoid $E_L$ is finite if and only if $L$ is a regular language. \end{prop}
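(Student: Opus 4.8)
The plan is to prove both implications by relating the syntactic monoid $E_L$ to finite-state automata, freely using the fact (recorded just above) that $\sim_L$ is a two-sided congruence on $\Sigma^{\ast}$, so that $E_L$ is indeed a monoid.

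For the implication ``$L$ regular $\Rightarrow$ $E_L$ finite'', I would fix a deterministic finite automaton accepting $L$, with finite state set $Q$, transition map $\delta$, initial state $q_0$ and accepting set $F\subset Q$. For each word $\omega$ let $\delta_\omega\colon Q\to Q$ be $q\mapsto\delta(q,\omega)$; then $\omega\mapsto\delta_\omega$ is a monoid homomorphism from $\Sigma^{\ast}$ into the finite monoid of self-maps of $Q$, with image the transition monoid $T$, of size at most $|Q|^{|Q|}$. The key point is that this homomorphism factors through $E_L$: if $\delta_\omega=\delta_{\omega'}$, then for all $x,y\in\Sigma^{\ast}$ one has $\delta(q_0,x\omega y)=\delta(q_0,x\omega' y)$, hence $x\omega y\in L\iff x\omega' y\in L$, i.e. $\omega\sim_L\omega'$. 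Therefore $E_L$ is a quotient of $T$, and in particular finite.

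For the converse ``$E_L$ finite $\Rightarrow$ $L$ regular'', I would build a deterministic automaton directly out of $E_L$: state set $E_L$, initial state the class $[\emptyset]$ of the empty word, transitions $\delta([\omega],a):=[\omega a]$ for $a\in\Sigma$, and accepting set $F:=\{[\omega] : \omega\in L\}$. The work is in checking that $\delta$ and $F$ are well defined on $\sim_L$-classes: right concatenation by $a$ respects $\sim_L$ (apply the definition of $\sim_L$ with the right context $y$ replaced by $ay$), and $F$ is a union of classes (take $x=y=\emptyset$ in the definition of $\sim_L$). One then reads off that the run of this automaton on a word $\omega$ ends in state $[\omega]$, which lies in $F$ exactly when $\omega\in L$; so it accepts $L$, and since $E_L$ is finite it is a finite automaton, whence $L$ is regular.

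I do not expect a genuine obstacle here: both directions are short once the automaton/monoid dictionary is set up. The only subtlety worth care is bookkeeping of left and right contexts in the well-definedness verifications, and the observation that it is precisely the \emph{two-sided} relation $\sim_L$ (not merely its right-compatible coarsening, which would suffice just to run the transitions) that both directions must use, since that is how the statement defines $E_L$.
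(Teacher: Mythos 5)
Your argument is correct, and it is essentially the classical proof: the paper itself gives no proof of this proposition, citing it as well-known from Eilenberg, and your two directions (quotienting the transition monoid of a DFA onto $E_L$, and building a DFA with state set $E_L$ after checking that right concatenation and membership in $L$ are well defined on $\sim_L$-classes) are exactly the standard route found in that reference. The only point worth flagging is a dependency, not a gap: since the paper defines ``regular'' via closure of finite languages under union, product and star, both of your directions silently invoke Kleene's theorem (regular $\Leftrightarrow$ DFA-recognizable), which the paper only states, also without proof, immediately after this proposition; if you want your proof to stand on the paper's definitions alone, you should say explicitly that you are using that equivalence, or reorder so that Kleene's theorem is available first.
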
 

This map from regular languages to (isomorphism classes of) finite monoids is easily seen to be surjective, if one allows finite sets of letters $\Sigma$ of arbitrary large size. 

\begin{remark}
The relation between finite monoids and regular languages (and finite automata, see later) to some degree answers the question why an  undergraduate course on modern algebra often has a whole semester dedicated to finite groups while the notion of a monoid may not even appear in a year-long course on the topic. On one hand, finite groups bear close relation to number theory, starting with classification of finite cyclic groups and Sylow theorems, while finite monoids are not structurally as beautiful. Their number (up to isomorphism) grows quickly with size, and no similar relation to number theory is in sight. Second, finite monoids are studied implicitly in an introductory course on formal languages in any computer science department. Each finite automaton gives rise to the language $L$ that it accepts (see below) and, in turn, to the finite monoid $E_L$. In this sense, finite monoids are implicitly encoded in finite state automata and (implicitly) appear in courses in computer science departments. 
\end{remark}

%%%%%%%%%%%%%%%%%%%%
% Finite state automata
%%%%%%%%%%%%%%%%%%%%

\subsection{Finite state automata}
\label{subset_fsa} 
A \emph{deterministic} finite automaton (DFA) $F$ consists of a finite set of states $Q$, the subset $Q_{\t}\subset Q$ of \emph{terminal} or \emph{accepting} states, the initial state $q_{\init}\in Q$ 
%(or a subset $Q_{\init}\subset Q$ of initial states) 
and a map $\delta:\Sigma\times Q\lra Q$. The latter map describes which state  to go to from a state $s$ of an automaton upon reading letter $a\in \Sigma$, for all $s$ and $a$. Given a word $\omega\in \Sigma^\ast$, start with the initial state $q_{\init}$ and travel through the states of $F$: if we're in a state $s$ and the next letter of $\omega$ is $a$, we go to state $\delta(a,s)$. If at the end of the word we're in one of the accepting states (from the subset $Q_{\init}$), word $\omega$ is in the language $L_F$ associated to automaton $F$. Otherwise, $\omega\notin L_F$. 
%If instead there's a subset $Q_{\init}$ of initial states, $\omega\in L_F$ if starting with some initial state in $Q_{\init}$ and upon reading the word we'll end up in a state in $Q_{\t}$. Using a subset $Q_{\init}$ instead of a single starting state $q_{\init}$ restores the symmetry between initial and terminal states, see also~\cite{E1}. 
The following result holds, see~\cite{E1,HU,U}.

\begin{prop}[Kleene's theorem] A language $L_F$ associated to a deterministic finite automaton $F$ is regular. Any regular language $L$ can be obtained from some DFA.
\end{prop}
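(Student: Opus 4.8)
The plan is to establish the two assertions separately: that $L_F$ is regular for every DFA $F$, and conversely that every regular language arises as $L_F$.

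\textbf{From a DFA to a regular language.} Given a DFA $F$ with state set $Q=\{q_1,\dots,q_n\}$, I would run the classical ``Kleene algebra'' dynamic programming argument over intermediate states. For states $q_i,q_j$ and $0\le k\le n$, let $R^{(k)}_{ij}\subset\Sigma^{\ast}$ be the set of words $\omega$ such that reading $\omega$ from $q_i$ ends in $q_j$ while passing only through intermediate states among $q_1,\dots,q_k$ (the two endpoints excepted). The base language $R^{(0)}_{ij}$ is finite: it consists of the letters $a$ with $\delta(a,q_i)=q_j$, together with the empty word $\emptyset$ when $i=j$. The recursion
\[
R^{(k)}_{ij}\ =\ R^{(k-1)}_{ij}\ \cup\ R^{(k-1)}_{ik}\,\bigl(R^{(k-1)}_{kk}\bigr)^{\ast}\,R^{(k-1)}_{kj}
\]
holds because a path with intermediate states in $\{q_1,\dots,q_k\}$ either avoids $q_k$ or decomposes at its successive visits to $q_k$. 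By induction on $k$, using closure of regular languages under union, ordered product and star, each $R^{(k)}_{ij}$ is regular, and then $L_F=\bigcup_{q_j\in Q_{\t}}R^{(n)}_{q_{\init}\,j}$ is a finite union of regular languages, hence regular.

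\textbf{From a regular language to a DFA.} Here I would induct on the inductive definition of regularity, but it is convenient to widen the class of machines first: allow nondeterministic automata, possibly with $\varepsilon$-transitions, i.e.\ a finite state set, an initial state, a set of accepting states, and a transition relation $\delta\subset(\Sigma\sqcup\{\varepsilon\})\times Q\times Q$. A single word has an obvious ``path'' automaton, and a finite language is a finite union of such, handled by the union step. For $L\cup L'$, $LL'$ and $L^{\ast}$ one uses the Thompson constructions on NFAs with $\varepsilon$-moves: a new start branching by $\varepsilon$ into the two machines (union); wiring the accept of the first into the start of the second (product); an $\varepsilon$ feedback loop together with an $\varepsilon$ bypass (star). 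Each is checked to accept exactly the named language. Finally the \emph{subset construction} turns any NFA $N$ on states $Q$ into an equivalent DFA on states $\PP(Q)$, whose transition sends a subset $T$ on letter $a$ to the set of states reachable from $T$ by an $a$-move followed by $\varepsilon$-moves, with accepting subsets those meeting the accepting set of $N$; one verifies by induction on word length that this DFA accepts $L_N$. Composing these steps expresses every regular language as $L_F$ for a suitable DFA $F$.

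\textbf{Main obstacle.} The genuinely non-formal point is that product and especially star do not preserve determinism, so the induction cannot stay inside DFAs; the real content is therefore the detour through NFAs and the correctness of the subset construction — the determinization step, which is also the source of the exponential $|Q|\mapsto 2^{|Q|}$ blow-up. In the first direction there is no comparable obstacle: only the bookkeeping in the recursion for $R^{(k)}_{ij}$ requires care (remembering the empty word in the base cases, and that $\bigl(R^{(k-1)}_{kk}\bigr)^{\ast}$ already includes the empty path), but it introduces no new idea.
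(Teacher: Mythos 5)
Your proposal is correct: it is the classical textbook argument (the $R^{(k)}_{ij}$ recursion of McNaughton--Yamada for the DFA-to-regular direction, and Thompson's constructions followed by the subset construction for the converse), which is exactly the proof found in the references the paper cites (Eilenberg, Hopcroft--Ullman); the paper itself states the theorem without proof. The only routine detail to add is that in the determinization of an NFA with $\varepsilon$-moves the initial state of the resulting DFA must be the $\varepsilon$-closure of the initial state(s), matching the closure you already apply after each letter; otherwise the sketch is complete and correctly identifies determinization as the one nontrivial step.
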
 

 Note that there are infinitely many pairwise nonisomorphic DFAs giving rise to any given regular language $L$. We obtain surjective many-to-one map 
 
 \vspace{0.1in} 
 
 \begin{center}
 (Isomorphism classes of) DFA with alphabet $\Sigma \quad \lra \quad$ regular languages $L\subset \Sigma^{\ast}$.   
\end{center}

 \vspace{0.1in} 

Given a language $L$, consider an equivalence relation $\sim_{\ell L}$ on $\Sigma^{\ast}$ with $\omega\sim_{\ell L}\omega'$ if and only if $\forall x\in \Sigma^{\ast}$ words $\omega x$ and $\omega'x$ are either both in $L$ or both not in $L$. Denote the set of equivalence classes by $X_L$.  
The syntactic monoid $E_L$ acts faithfully on the set $X_L$ by right multiplication (concatenation) by equivalence classes in $E_L$, giving us a right action
\[ X_L \times E_L \lra X_L . 
\] 

\begin{remark} \label{rem_actions} In modern mathematics we tend to use left actions. In the theory of finite state automata the convention is to have $\Sigma^{\ast}$ and $L$ act on the right on the set of states of an automaton. We will keep this difference in mind throughout the paper.
\end{remark}

Likewise, define equivalence relation $\sim_{r L}$ by writing $x$ to the left of $\omega$ and $\omega'$, so that $\sim_{r L} = \sim_{\ell L^{\op}}$. Denote the set of equivalence classes for $\sim_{r L}$ by ${}_L X$. Monoid $E_L$ acts on ${}_L X$ on the left by multiplication (concatenation):
\[ E_L \times {}_L X \lra {}_L X. 
\] 

\begin{prop}
 Set $X_L$ is finite if and only if the language $L$ is regular. 
\end{prop}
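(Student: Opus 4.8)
The plan is to prove that $X_L$ is finite if and only if $L$ is regular by relating $X_L$ to objects already shown to detect regularity, namely the syntactic monoid $E_L$ (finite iff $L$ is regular by the quoted Proposition~10.1 of~\cite{E1}) and the Myhill--Nerode equivalence. The cleanest route is a direct Myhill--Nerode-style argument together with a comparison to $E_L$.

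\textbf{Step 1: $L$ regular $\Rightarrow$ $X_L$ finite.} Assume $L$ is regular. By Kleene's theorem there is a DFA $F=(Q,Q_\t,q_{\init},\delta)$ with $L_F=L$. Extend $\delta$ to words $\hat\delta:\Sigma^\ast\times Q\lra Q$ in the usual way, $\hat\delta(\emptyset,s)=s$ and $\hat\delta(a\omega,s)=\hat\delta(\omega,\delta(a,s))$ (being careful with the right-action convention of Remark~\ref{rem_actions}). The key observation is that if $\hat\delta(\omega,q_{\init})=\hat\delta(\omega',q_{\init})$, then for every $x\in\Sigma^\ast$ the states reached by $\omega x$ and $\omega' x$ from $q_{\init}$ coincide, hence $\omega x\in L\iff \omega' x\in L$, so $\omega\sim_{\ell L}\omega'$. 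Thus $\omega\mapsto \hat\delta(\omega,q_{\init})$ descends to a surjection from $\Sigma^\ast/{\sim_{\ell L}}=X_L$ onto a subset of the finite set $Q$, giving $|X_L|\le |Q|<\infty$.

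\textbf{Step 2: $X_L$ finite $\Rightarrow$ $L$ regular.} Here I would either (a) build the minimal DFA directly: take $Q:=X_L$ with $q_{\init}$ the class of $\emptyset$, accepting states the classes of words in $L$ (well-defined since $\sim_{\ell L}$ refines membership in $L$ by taking $x=\emptyset$), and transition $\delta(a,[\omega]):=[\omega a]$ (well-defined by the definition of $\sim_{\ell L}$); then $L_F=L$ and $L$ is regular by Kleene; or (b) invoke the quoted Proposition~10.1 via the faithful right action $X_L\times E_L\lra X_L$ noted just before the statement. For (b): faithfulness of the action means the map $E_L\lra X_L^{X_L}$ is injective, so $|E_L|\le |X_L|^{|X_L|}<\infty$, whence $L$ is regular by Proposition~10.1. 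Route (a) is more self-contained and also explains the appearance of $X_L$ as the state set of the minimal automaton, which fits the paper's later discussion in Section~\ref{sec_sspaces}; I would present (a) and remark on (b).

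\textbf{Main obstacle.} There is no serious analytic obstacle; the only thing requiring care is the bookkeeping around the right-action convention (Remark~\ref{rem_actions}) so that the two one-sided equivalences $\sim_{\ell L}$ and $\sim_{r L}$, and the corresponding left/right DFA conventions, are used consistently — in particular checking that $\delta(a,[\omega]):=[\omega a]$ (appending on the correct side) is the well-defined map making $X_L$ the state set, rather than the mirror-image version which would instead compute $L^{\op}$. A secondary point worth a sentence is well-definedness of the accepting set and the transition map, both of which follow immediately from the defining property of $\sim_{\ell L}$ by specializing the quantified word $x$. Everything else is the standard Myhill--Nerode argument.
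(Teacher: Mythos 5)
Your proof is correct and is essentially the argument the paper intends: the proposition is the classical Myhill--Nerode fact, stated in the paper without proof, and the paper immediately builds the canonical automaton $F(X_L)$ with state set $X_L$ and right multiplication by letters, which is exactly your route (a). One small wording fix in Step 1: since reaching the same state implies $\sim_{\ell L}$-equivalence, the induced map is a surjection from the set of states reachable in $Q$ onto $X_L$ (not a map from $X_L$ into $Q$, which can fail for a non-minimal DFA), and this still yields $|X_L|\le |Q|<\infty$.
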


Set $X_L$ gives rise to a canonical automaton $F(X_L)$ accepting a regular language $L$. States of $F(X_L)$ are elements of $X_L$, an element $a$ of $\Sigma$ acts on equivalence classes in $X_L$ by right multiplication by $a$. Automaton $F(X_L)$ is universal among automata accepting $L$, as follows. Assume that an automaton $F$ accepts $L$ and any state of $F$ is reachable from the initial state. Then there's a unique surjective map $F\lra F(X_L)$, that is, a surjective map from the states of $F$ to the states of $F(X_L)$ that respects the structure of automata: initial state goes to the initial state, the map commutes with the action of $\Sigma$, and accepting states go to accepting states. 

\begin{prop} Automaton $F(X_L)$ is the unique, up to an isomorphism, deterministic automaton with the smallest number of states accepting $L$.  
\end{prop}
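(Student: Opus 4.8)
The plan is to establish the standard Myhill--Nerode minimality result for the automaton $F(X_L)$. The key point is that $X_L$ was defined as the set of equivalence classes for the relation $\sim_{\ell L}$ on $\Sigma^{\ast}$, so its states are precisely the ``Nerode residuals'' $L.\omega := \{ x \in \Sigma^\ast \mid \omega x \in L\}$ in disguise. First I would set up $F(X_L)$ precisely as an automaton: the initial state is the class $[\emptyset]$ of the empty word, the action of a letter $a$ sends $[\omega]$ to $[\omega a]$ (well-defined by construction of $\sim_{\ell L}$), and the accepting states are those classes $[\omega]$ with $\omega \in L$ (again well-defined, taking $x = \emptyset$ in the definition of $\sim_{\ell L}$). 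One checks that the word $\omega$ drives $F(X_L)$ from $[\emptyset]$ to $[\omega]$, so $\omega$ is accepted iff $[\omega]$ is accepting iff $\omega \in L$; hence $F(X_L)$ accepts $L$. Finiteness of the state set is exactly the preceding proposition ($X_L$ finite iff $L$ regular), which I may assume.

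Next I would prove the universality statement quoted just before this proposition, from which minimality follows immediately: if $F$ is any automaton accepting $L$ all of whose states are reachable from $q_{\init}$, define $\phi\colon F \to F(X_L)$ by sending a state $s$ to $[\omega]$ for any word $\omega$ leading from $q_{\init}$ to $s$. This is well-defined: if $\omega$ and $\omega'$ both lead to $s$, then for every $x$ the words $\omega x$ and $\omega' x$ both lead to the same state, hence are simultaneously accepted or not, i.e. $\omega \sim_{\ell L} \omega'$. The map $\phi$ is surjective since every class $[\omega]$ is hit by the state reached from $q_{\init}$ by reading $\omega$; it sends $q_{\init}$ to $[\emptyset]$, commutes with the $\Sigma$-action by construction, and sends accepting states to accepting classes because $F$ accepts $L$. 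A surjection of automata cannot increase the number of states, so any reachable automaton for $L$ has at least $|X_L|$ states, and $F(X_L)$ attains this bound.

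For uniqueness up to isomorphism, suppose $F$ is another automaton with $|X_L|$ states accepting $L$; without loss of generality all its states are reachable (otherwise delete the unreachable ones, which strictly decreases the count, a contradiction). Then $\phi\colon F \to F(X_L)$ is a surjection between finite sets of the same cardinality, hence a bijection, and it respects initial state, accepting states, and the $\Sigma$-action, so it is an isomorphism of automata. This also shows $F(X_L)$ is, up to isomorphism, the \emph{unique} minimal one.

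The main obstacle is not conceptual difficulty but bookkeeping care about conventions: the paper uses \emph{right} actions of $\Sigma^\ast$ on states (Remark~\ref{rem_actions}), and the relation $\sim_{\ell L}$ appends the test word $x$ on the \emph{right} of $\omega$, so I must be careful to phrase ``the word $\omega$ drives the automaton to the state $[\omega]$'' consistently with that right-action convention and with the definition of the initial state and accepting set. A secondary point is to verify well-definedness of the accepting set of $F(X_L)$ and of the letter-action on classes --- both follow by plugging suitable choices of $x$ (namely $x=\emptyset$ and $x = a x'$) into the definition of $\sim_{\ell L}$, but these checks should be stated explicitly since they are exactly what makes $X_L$ an automaton in the first place.
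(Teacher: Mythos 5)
Your argument is correct and is exactly the route the paper relies on: the statement is the classical Myhill--Nerode fact, proved via the universality property of $F(X_L)$ described in the paragraph preceding the proposition (the paper leaves it unproved, citing the standard literature), and your surjection-from-a-trim-automaton argument is the same one the paper later writes out in full for the analogous minimality statement about $Q^-\subset A(-)$ in Section 4. The only micro-point to add at the end is that the bijection $\phi$ also \emph{reflects} acceptance (a non-accepting state of $F$ is reached only by words outside $L$, hence maps to a non-accepting class), which follows at once from the same observation you already used, so $\phi$ is indeed an isomorphism of automata.
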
 

Likewise, the set ${}_LX$ gives rise to a canonical (and minimal) automaton accepting the opposite language $L^{\op}$.

\vspace{0.1in}

A nondeterministic finite automaton    $(Q,\delta,Q_{\init},Q_{\t})$ over a finite set $\Sigma$ consists of  
\begin{enumerate}
\item a finite set $Q$ of states,
\item a transition function $\delta:Q\times \Sigma\lra \PP(\Sigma)$, where $\PP(\Sigma)$ is the set of subsets of $\Sigma$,
\item A nonempty subset of \emph{initial} states $Q_{\init}\subset Q$ and a subset of \emph{terminal} states $Q_{\t}\subset Q$. 
\end{enumerate}
A nondeterministic automaton accepts a following language $L$. Word $\omega=a_1\cdots a_n$ is in $L$ if and only if there exists a sequence of states $(q_0,q_1,\ldots, q_n)$ such that $q_0\in Q_{\init},q_n\in Q_{\t}$ and $q_{i+1}\in \delta(q_i,a_i)$ for $0\le i\le n-1$. The following is another basic result on automata~\cite{E1,HU,BR2}. 

\begin{prop} 
A language is accepted by a nondeterministic automaton if and only if it is regular. 
\end{prop}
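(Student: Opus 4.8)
The plan is to prove the equivalence of NFA-acceptability and regularity by a two-way argument, both directions of which are standard but which I would present carefully. First I would show that every NFA accepts a regular language. The clean way to do this is the subset (powerset) construction: given an NFA $(Q,\delta,Q_{\init},Q_{\t})$, build a DFA whose state set is $\PP(Q)$, whose initial state is $Q_{\init}$, whose transition map sends a subset $T\subset Q$ and a letter $a$ to $\bigcup_{q\in T}\delta(q,a)$, and whose accepting states are those subsets meeting $Q_{\t}$. One checks by induction on word length that after reading $\omega=a_1\cdots a_n$ the DFA sits in precisely the set of states $q_n$ for which an accepting run $(q_0,\dots,q_n)$ of the NFA exists with $q_0\in Q_{\init}$; hence the DFA accepts exactly $L$. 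By Kleene's theorem (the proposition quoted above), the DFA-accepted language $L$ is regular.

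For the converse, that every regular language is accepted by some NFA, I would simply observe that every DFA is in particular an NFA: given a DFA $(Q,\delta,q_{\init},Q_{\t})$ with $\delta:\Sigma\times Q\lra Q$, set $Q_{\init}=\{q_{\init}\}$ and replace $\delta$ by the singleton-valued map $(q,a)\mapsto\{\delta(a,q)\}$; the acceptance condition for this NFA is exactly the DFA acceptance condition, so the two automata accept the same language. Since by Kleene's theorem every regular language arises from a DFA, every regular language is NFA-accepted. Combining the two directions gives the proposition. (One could alternatively construct NFAs directly from the inductive definition of regular languages — base case finite languages, then closure under union, concatenation, and star, using $\varepsilon$-moves or the nonempty-set-of-initial-states formulation — but the DFA-as-NFA shortcut is shorter and uses only results already in hand.)

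I do not anticipate a genuine obstacle here, since this is a textbook result and the excerpt itself cites \cite{E1,HU,BR2} for it; the only thing needing care is bookkeeping around conventions. In particular the paper's NFA definition has the transition function landing in $\PP(\Sigma)$ as written, whereas acceptance is phrased with $q_{i+1}\in\delta(q_i,a_i)$, which only makes sense if the codomain is $\PP(Q)$ — so I would state the subset construction for $\delta:Q\times\Sigma\lra\PP(Q)$, matching the acceptance condition, and note the typo if appropriate. The other small point is the right-action convention flagged in Remark~\ref{rem_actions}: the powerset DFA should be set up so that letters act on the right on subsets of $Q$, consistent with the rest of the paper, but this is purely cosmetic and does not affect the argument.
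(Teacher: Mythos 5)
Your proposal is correct. The paper itself gives no proof of this proposition, citing the standard references instead, and your argument — the powerset construction to turn an NFA into an equivalent DFA (hence regularity via Kleene's theorem), together with the observation that a DFA is a special NFA for the converse — is exactly the textbook proof those references supply; your note that the transition function's codomain should read $\PP(Q)$ rather than $\PP(\Sigma)$ is also a correct catch.
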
 

In Section~\ref{subsec_plus_minus} we will interpret nondeterministic automata for a language $L$ via free semimodule covers of the state space $A(-)$ for the topological theory which has $L$ as the interval language. 

\vspace{0.1in} 

\begin{example}  \label{ex_L_b}
Consider the language $L=(a+b)^{\ast}b(a+b)$ that consists of words with the second to last letter $b$. The minimal DFA for $L$, shown in Figure~\ref{new-file-003}, has four states, with $q_0$ the initial state. States $q_2,q_3$ are accepting, shown with thick border. 

\input{new-file-003}

For this language, the set ${}_L X$ has four elements, in bijection with the states  of the minimal automaton. These elements can represented by words $\emptyset,b,b^2,ba$. These paths go to the states $q_0,q_1,q_2,q_3$, respectively, from the initial state $q_0$.

\vspace{0.05in} 

A nondeterministic finite automaton with the minimal number of states for $L$ is given in Figure~\ref{new-file-002}.

\input{new-file-002}

The dual language $L^{\op}=(a+b)b(a+b)^{\ast}$ consists of words whose second letter is $b$. Its minimal DFA is shown in  Figure~\ref{new-file001}, with states $q_0',q_1',q_2',q_3'$, where  $q_0'$ is the initial state. 
 
\input{new-file001}

The syntactic monoid $E_L$ for this language is easy to write down due to the specifics of the language $L$.  Syntactic monoid consists of seven elements, that can be represented by words of length at most two, $E_L=\{\emptyset,a,b,aa,ab,ba,bb\}$. The multiplication is given by concatenation followed by truncating a word to leave at most two rightmost letters. For instance, $(ab)a=aba\sim ba$. 
We will revisit this example later in Example~\ref{ex:2nd-last-b} in  Section~\ref{section:examples}. 
\end{example}

The theory of  finite state automata  can be found in many textbooks on the field, including treatises by Eilenberg~\cite{E1,E2} and Conway~\cite{Co}, see also~\cite{EK,Ch,BR2,HU,KuS,SS,U}. 

Any monoid $E$ gives rise to the bialgebra $\kk[E]$ with $\kk$ a field and comultiplication $\Delta(g)=g\otimes g$ for $g\in E$. Finite state automata with associated syntactic monoids are considered from this viewpoint in  Underwood~\cite[Chapter 2]{U}.

%%%%%%%%%%%%%%%%%%%%%
% Circular languages 
%%%%%%%%%%%%%%%%%%%%%

\subsection{Circular languages and automata}
\label{subsec_circular} 
A language $L$ is called \emph{circular} if $\omega_1\omega_2\in L$ whenever $\omega_2\omega_1\in L$, so that together with every word in it $L$ contains all cyclic rotations of that word. Denoting the set of equivalence classes of words in $\Sigma^{\ast}$ by $\sigmaacirc$, we can write $L\subset \sigmaacirc$ for a circular language. 

It appears hard to directly modify the notion of finite state automata to describe those that accept circular languages. The reason is that an automaton gives a local construction of a regular language: we read one letter and move around in the automaton correspondingly. Circularity is a global condition that can be tested only when an entire word is available. Pictorially, and that's closely related to the topic of our paper, one can image a word $\omega$ as being written along an oriented interval, with orientation allowing to distinguish between the word $\omega$ and its opposite $\omega^{\op}$. One moves across the interval while reading the word, and the current position of the reader can be depicted by a mark between two consecutive letters, see Figure~\ref{circular-lang-01}. 

\vspace{0.1in} 

\input{circular-lang-01}

\input{circ-abab-02}

%\vspace{0.1in} 

A circular word can be naturally written along an oriented circle, see Figure~\ref{circular-lang-01} and an example in Figure~\ref{circ-abab-02} of equal circular words.

If one mark is placed on a circle between two consecutive letters and moved as we read the word, we might not recognize when we come back to the starting position. A way around that difficulty is to place two marks, first next to each other between two consecutive letters, and then read the word in two opposite directions, moving the marks past the letters as that happens and increasing portion of the circle between the two marks until they come together again somewhere on the circle, see Figure~\ref{circular-lang-02}. 

\input{circular-lang-02}

A deterministic \emph{circular} finite automaton (DCFA) consists of a finite set of letters $\Sigma$, a finite set $Q$ of states, a subset $Q_{\t}$ of terminating (or accepting) states, the initial state $q_{\init}\in Q$,
%(or a subset $Q_{\init}\subset Q$ of initial states) 
and two maps 
\[ \delta_{\ell}, \delta_r \ : \  \Sigma \times Q \lra Q
\] 
subject to the following conditions. For each $a\in \Sigma$ we also write the maps $\delta_{\ell}(a,\bullet), \delta_r(a,\bullet): Q \lra Q$ as $\delta_{\ell,a}, \delta_{r,a}$, respectively. 
\begin{enumerate}
    \item\label{circular-first} For all $a,b\in\Sigma$, maps $\delta_{\ell,a},\delta_{r,b}:Q\lra Q$ commute: 
    \[ \delta_{\ell,a}\delta_{r,b} \ = \ \delta_{r,b}\delta_{\ell,a}. 
    \]
    \item\label{circular-second} $\delta_{\ell}(a,q_{\init}) = \delta_r(a,{q_{\init}})$ for all $a\in\Sigma$. 
    \item\label{circular-third} States $\delta_{\ell}(a,q),\delta_r(a,q)$ are either both accepting (both in $Q_{\t}$) or not accepting (both in $Q\setminus Q_{\t}$), for any $q\in Q,a\in \Sigma$. 
\end{enumerate}
%If $Q_{\init}$ is a subset rather than a single state, condition (2) should be replaced by 
%\[ \delta_{\ell}(a,q) = \delta_r(a,q) \ \ \mathrm{for} \ \mathrm{any} \ \ q\in Q_{\init}, \ a\in\Sigma. \] 
This definition can be motivated as follows. The initial state $q_{\init}$ 
%(or subset $Q_{\init}$)  
is the state of the automaton 
when it starts reading a circular word and lands on a circle in some interval $I_0$ between two consecutive letters in a circular word $\omega$. At that moment the two marks are on the same interval $I_0$. ``Left" transition function $\delta_{\ell}(a,q)$ gives the next state of the automaton from state $q$ when the mark going clockwise along the circle encounters letter $a$ (a dot labelled $a$ in our pictures). ``Right" transition function $\delta_r(a,q)$ describes the next state of the automaton when the mark going counterclockwise along the circle encounters letter $a$. When two marks together traverse the circle and meet at some interval between two consecutive dots (letters), circular word $\omega$ is accepted if and only if the automaton is an accepting state (in subset $Q_{\t}$). 

Commutativity condition~\eqref{circular-first} is needed to make sure that the order of passing a label $a$ on the left and a label $b$ on the right does not matter, see Figure~\ref{circular-lang-03}.

\input{circular-lang-03}

Condition~\eqref{circular-second} says that the state of an arc with a single label $a$ is the same whether the automaton starts with two marks to the left of $a$ or to the right of $a$, see Figure~\ref{circular-lang-04}.

\input{circular-lang-04}

Condition~\eqref{circular-third} is dual to \eqref{circular-second} and says that if only one letter is left unread, then crossing it with either of the two marks does not matter for the accept or reject decision, see Figure~\ref{circular-lang-05}.

\vspace{0.1in} 

\input{circular-lang-05}

We view a circular language $L$ as a subset $L\subset \sigmaacirc$. Denote by $\widetilde{L}=\pi^{-1}(L)\subset \Sigma^{\ast}$ the associated language. Here $\pi: \Sigma^{\ast}\lra \sigmaacirc$ is the quotient map from the set of $\Sigma$-sequences  to circular $\Sigma$-sequences. 

A circular language $L$ is called \emph{c-regular}, or \emph{circular regular} if it's accepted by some finite deterministic circular automaton. 

\begin{prop} \label{prop_c_reg} A circular language $L$ is c-regular if and only if the language $\widetilde{L}$ is regular. 
\end{prop}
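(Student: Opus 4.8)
The plan is to prove the two implications separately; in each direction the point is to trade the pair of transition functions $(\delta_{\ell},\delta_r)$ of a circular automaton for left and right multiplication in a suitable finite monoid. For the forward direction, suppose $L$ is accepted by a DCFA $(Q,\delta_{\ell},\delta_r,q_{\init},Q_{\t})$. The three axioms of a DCFA are exactly what is needed to make the accept/reject verdict of a circular word independent of the interval in which the two marks are placed and of the schedule by which the marks are pushed apart: condition~\eqref{circular-first} lets one interleave left- and right-moves freely; a short induction on the starting interval, using condition~\eqref{circular-second}, shows the reached state does not depend on where the marks start once the meeting interval is fixed; and condition~\eqref{circular-third} shows the verdict does not depend on the meeting interval either. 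Granting this, I would evaluate a word $\omega=a_1\cdots a_n$ by placing both marks in the interval preceding $a_1$, keeping the right mark fixed, and pushing the left mark once around the circle across $a_1,\dots,a_n$ in turn; the state reached is $\delta_{\ell,a_n}\circ\cdots\circ\delta_{\ell,a_1}(q_{\init})$, so $\omega\in\widetilde L$ if and only if this state lies in $Q_{\t}$. Thus $\widetilde L$ is exactly the language of the ordinary DFA $(Q,\delta_{\ell},q_{\init},Q_{\t})$ obtained from the DCFA by discarding $\delta_r$, and so $\widetilde L$ is regular by Kleene's theorem.

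For the converse, assume $\widetilde L$ is regular. Note first that $\widetilde L=\pi^{-1}(L)$ is invariant under cyclic rotation of words: if $uv\in\widetilde L$ then $\pi(vu)=\pi(uv)\in L$, hence $vu\in\widetilde L$. Let $E=E_{\widetilde L}$ be the syntactic monoid of $\widetilde L$, which is finite because $\widetilde L$ is regular, let $\mu:\Sigma^{\ast}\to E$ be the canonical surjection, and put $P=\mu(\widetilde L)$; since the syntactic congruence saturates $\widetilde L$ we have $\widetilde L=\mu^{-1}(P)$. Cyclic invariance of $\widetilde L$ together with surjectivity of $\mu$ gives conjugation invariance of $P$: $xy\in P\iff yx\in P$ for all $x,y\in E$. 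I would then take the DCFA with state set $Q=E$, initial state $q_{\init}=1_E$, accepting set $Q_{\t}=P$, and transition functions
\[
\delta_{\ell,a}(q)=q\,\mu(a),\qquad \delta_{r,a}(q)=\mu(a)\,q\qquad (a\in\Sigma,\ q\in E),
\]
and verify the axioms: \eqref{circular-first} holds because $\delta_{\ell,a}\delta_{r,b}(q)=\mu(b)\,q\,\mu(a)=\delta_{r,b}\delta_{\ell,a}(q)$; \eqref{circular-second} holds because $\delta_{\ell,a}(1_E)=\mu(a)=\delta_{r,a}(1_E)$; and \eqref{circular-third} is exactly conjugation invariance of $P$, applied with $x=q$ and $y=\mu(a)$. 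It remains to check that this DCFA accepts $L$: running it on a circular word $b_1\cdots b_n$, the left mark accumulates by right multiplication the product of the $\mu(b_i)$ it crosses and the right mark accumulates by left multiplication the product of the remaining ones, and since left and right moves commute the state reached when the marks meet is $\mu$ of a cyclic rotation of $b_1\cdots b_n$. By $\widetilde L=\mu^{-1}(P)$ and cyclic invariance of $\widetilde L$, this state lies in $P$ precisely when $b_1\cdots b_n\in\widetilde L$, i.e. precisely when $\pi(b_1\cdots b_n)\in L$. Hence $L$ is c-regular.

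The step I expect to require the most care is the well-definedness claim underlying the forward direction — that the three DCFA axioms really do force the accept/reject verdict to be independent of where the two marks are placed and how they are moved. This is precisely where conditions~\eqref{circular-second} and~\eqref{circular-third} get used, and where one must be careful with the bookkeeping: which mark crosses which letter, on which side the corresponding element of $E$ multiplies, and in what order the maps compose. The same bookkeeping recurs in the verification that the monoid DCFA built in the converse direction accepts $L$. Once this is pinned down, both constructions and all the axiom checks are routine.
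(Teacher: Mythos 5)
Your proof is correct and follows essentially the same route as the paper: one direction by freezing the right mark and discarding $\delta_r$ to get an ordinary DFA for $\widetilde{L}$, the other by building a circular automaton on the finite syntactic monoid $E_{\widetilde{L}}$ with left/right multiplication as the two commuting transition functions, cyclic invariance of $\widetilde{L}$ giving conditions \eqref{circular-second}--\eqref{circular-third}. Your added care about well-definedness of the accept/reject verdict and the explicit axiom checks simply fill in details the paper leaves implicit.
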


\begin{proof} Given a circular automaton for $L$, forgetting the right action $q_r$ gives a deterministic finite automaton for $\widetilde{L}$, showing implication in one direction. In other words, we only move the left mark, keeping the right mark fixed on the circle. 

In the other direction, suppose that $\widetilde{L}$ is regular. Then the syntactic monoid $E_{\widetilde{L}}$ is finite. From that syntactic monoid we can build the circular automaton for $L$ using $E_{\widetilde{L}}$ as the set of states, empty word as the initial state, and left and right commuting actions of $a\in \Sigma$ via left and right concatenation with $a$, viewed as maps from $E_{\widetilde{L}}$ to itself. Invariance of $\widetilde{L}$ under rotations of words insures that the resulting automaton is indeed circular and satisfies the above properties \eqref{circular-first}-\eqref{circular-third}. 
\end{proof} 

The proposition shows that c-regular circular languages are just regular languages which are rotationally invariant: $\omega_1\omega_2\in L\Leftrightarrow \omega_2\omega_1 \in L$.   

\vspace{0.05in} 

\begin{remark} We use the word \emph{circular} to describe a language as above, since the word \emph{cyclic} is already reserved in the literature~\cite{BR1} for languages that are both circular and together with each word $\omega$ contain all its powers and roots $\sqrt[k]{\omega}$ (when any such exist). A possible alternative is to call circular languages \emph{central languages}, following that usage in noncommutative power series~\cite{Re}.
\end{remark}

{\it Cyclic derivatives.}
  Rota--Sagan--Stein's~\cite{RSS} cyclic derivative $\partial/\partial a$, where $a\in \Sigma$, takes a cyclic word $\omega$ and sends it to the sum of ordinary (non-cyclic) words given by removing all occurrences of letter $a$ in $\omega$, for instance,
  \[ \frac{\partial}{\partial a}(babbcaa) \ = \ bbcaab + ababbc + babbca 
  \] 
  (note that removing a letter breaks up the cycle of a circular word), see Figure~\ref{sect02-001}. 
  Rota, Sagan and Stein showed that cyclic derivative of a rational noncommutative power series is rational.  
  Cyclic derivative operation is important in the study of Calabi--Yau algebras and noncommutative geometry~\cite{Ko93,Gin07}. 
  
  \input{sect02-001}
  
  We note that cyclic derivative $\partial/\partial a$ of a circular language $L$ can be defined as follows: derivative $\partial/\partial a(\omega)$ of a circular word is the $\Bool$-semilinear combination of (usual) words given by removing each occurrence of $a$ in $\omega$. For instance,  
  \[ \ppartial{a}(baba) = bab+bab=bab.
  \] 
  Thinking of a language (or a circular language) as a formal sum $L=\sum_{\omega\in L} \omega$ of words (or circular words) in it, define the derivative of a circular language by 
  \begin{equation}
      \ppartial{a}(L) := \sum_{\omega\in L} \ppartial{a}(\omega). 
  \end{equation}
  \begin{prop} Partial derivative $\ppartial{a}(L)$ of a c-regular circular language is c-regular. 
  \end{prop}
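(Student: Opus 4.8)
By Proposition~\ref{prop_c_reg}, the hypothesis that $L\subset\sigmaacirc$ is c\nobreakdash-regular is equivalent to the associated ordinary language $\widetilde{L}=\pi^{-1}(L)\subset\Sigma^{\ast}$ being regular, so this is what I will use. Since deleting a letter from a circular word produces an \emph{ordinary} word, $\ppartial{a}(L)$ is an ordinary language, and the content of the statement is that it is regular (and hence, closing under rotations, that it generates a c\nobreakdash-regular circular language). The plan has two steps: (i) express $\ppartial{a}(L)$ in closed form in terms of $\widetilde{L}$; (ii) conclude regularity from the standard closure properties of regular languages.

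The first step is the combinatorial identity
\[
\ppartial{a}(L) \ = \ \{\, vu \ : \ u,v\in\Sigma^{\ast},\ uav\in\widetilde{L}\,\}.
\]
To prove it, place a word $c\in L$ on an oriented circle as in Figure~\ref{circular-lang-01}; an occurrence of $a$ in $c$ is a marked position on the circle carrying the label $a$, and deleting it cuts the circle and reads off a linear word. If one writes a linear representative of $c$ as $uav\in\widetilde{L}$ with the displayed $a$ being exactly that position, then the linear word left after deletion is $vu$, and one checks this is independent of which such representative is chosen, so it is well defined on $\sigmaacirc$; this gives $\supseteq$. Conversely, every word occurring in $\ppartial{a}(\omega)$ for $\omega\in L$ arises in this way from some occurrence of $a$ in $\omega$ and some linear representative, giving $\subseteq$. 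Working over $\Bool$, repeated summands collapse, so the right-hand side is literally the displayed set.

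For the second step, fix a DFA $F=(Q,\delta,q_{\init},Q_{\t})$ accepting $\widetilde{L}$, which exists by Kleene's theorem. For $p,q\in Q$ let $L_{p,q}\subset\Sigma^{\ast}$ be the regular language of words $w$ for which reading $w$ starting in state $p$ ends in state $q$. Then $uav\in\widetilde{L}$ holds exactly when, for some $p\in Q$ and some $q\in Q_{\t}$, one has $u\in L_{q_{\init},p}$ and $v\in L_{\delta(a,p),q}$. Combining this with the identity above,
\[
\ppartial{a}(L) \ = \ \bigcup_{p\in Q}\ \bigcup_{q\in Q_{\t}} L_{\delta(a,p),q}\cdot L_{q_{\init},p},
\]
a finite union of concatenations of regular languages, hence regular; by Proposition~\ref{prop_c_reg} the circular language it generates is c\nobreakdash-regular. (Equivalently, one may run the bookkeeping inside the finite syntactic monoid: if $P\subset E_{\widetilde L}$ is the set of syntactic classes of words in $\widetilde L$ and $L_g$ denotes the regular language of words of syntactic class $g$, then $\ppartial{a}(L)=\bigcup_{g,h:\, g[a]h\in P} L_h\cdot L_g$.)

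The argument is essentially bookkeeping, and the one place that needs genuine care is the first step: setting up the bijection between occurrences of the letter $a$ in a circular word $c$ and decompositions $uav$ of linear representatives of $c$, verifying that deletion at such an occurrence really yields $vu$, and that this descends to a well-defined operation on $\sigmaacirc$. Once that dictionary is in place, closure of the regular languages under finite unions and products (Section~\ref{sec_regular}) completes the proof.
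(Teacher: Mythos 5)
Your proof is correct, and it takes a more explicit route than the paper's. Both arguments begin the same way, reducing via Proposition~\ref{prop_c_reg} to showing that an ordinary language attached to $\ppartial{a}(L)$ is regular and then exploiting an automaton for $\widetilde{L}$. But from there the paper only sketches an automaton-surgery argument: take an NFA for $\widetilde{L}$, duplicate it, and allow a jump from the first copy to the second that simulates the deleted letter $a$, leaving the details to the reader. You instead prove the closed-form identity $\ppartial{a}(L)=\{\,vu : uav\in\widetilde{L}\,\}$ and then write this as the finite union $\bigcup_{p\in Q}\bigcup_{q\in Q_{\t}} L_{\delta(a,p),q}\cdot L_{q_{\init},p}$ of concatenations of the state-pair languages of a DFA, so regularity follows from closure under union and product rather than from a bespoke construction. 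One genuine advantage of your version is that it keeps track of the cyclic reading order: deleting an occurrence of $a$ from a circular word yields $vu$, not $uv$, and your decomposition produces exactly that, whereas the paper's two-copy construction as described yields the in-place deletion language $\{uv: uav\in\widetilde{L}\}$, which agrees with the cyclic derivative only up to rotation (harmless for the circular statement, but glossed over there). The one point you assert rather than prove is the final passage from regularity of $\ppartial{a}(L)$ as an ordinary language to c-regularity of the circular language it generates; this needs that the rotation closure of a regular language is regular, which is standard (and in your setup also follows directly, since $vu$ and $uv$ are rotations of each other and the same state-pair decomposition handles all rotations), but a sentence to that effect would close the argument completely.
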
 
  \begin{proof} By Proposition~\ref{prop_c_reg}, it suffices for show that the ordinary language $\widetilde{L_a}$ associated to the circular language $L_a:=\ppartial{a}(L)$ is regular. One can start with a nondeterministic finite automaton $F$ that recognizes $\widetilde{L}$ and modify it to an automaton for $\widetilde{L_a}$
  by creating two copies of $F$ with a transition from the first to the second copy possible if the letter being read is $a$. Acceptable states need to be suitably changed (removed from the first copy of $F$). We leave details to an interested reader. 
  \end{proof}

  \begin{remark}
  The Hessian of a circular language can be defined by analogy with Ginzburg~\cite{Gin07}. 
  It would be interesting to understand the analogue of the quotient 
  \[ 
  \C\langle x_1,\ldots, x_n\rangle/(\!\!(\partial \Psi/\partial x_i)\!\!)_{i=1,\ldots, n}
  \] 
  of the free algebra by the two-sided ideal of the partial derivatives of the potential $\Psi$ in our case, with $\Psi$ replaced by a cyclic regular language $L$. As a first step, one can look for the analogue of the Poincar\'e lemma~\cite[Proposition~1.5.13]{Gin07}  for languages. 
  \end{remark}

%%%%%%%%%%%%%%%%%%%%%
%
%  One-dimensional case 
%
%%%%%%%%%%%%%%%%%%%%%%

\section{One-dimensional topological theories with defects} 
\label{sec_oned} 

\subsection{One-dimensional topological theories valued in \texorpdfstring{$\Bool$}{Bool}}
\label{subsec_valued} 

We refer to \cite{Kh3,IZ} for some background on one-dimensional topological theories. 

We fix a finite set $\Sigma$ of labels and consider oriented one-manifolds, possibly with boundary and with zero-dimensional submanifolds (defects) labelled by elements of $\Sigma$. We may write $\Sigma$ as $\{s_1,\ldots, s_m\}$ or as $\{a,b,\ldots\}$.  Defects live strictly inside the manifold and not on its boundary. Any such decorated 1-manifold $M$ is a union of intervals and circles. 

For a given connected component $M_i$ of $M$ that is an interval, its homeomorphism (or diffeomorphism) type is determined by the word $\omega$ one reads while traversing $M_i$ in the direction of its orientation, $\omega=a_1a_2\cdots a_n$ for $a_i\in \Sigma$. Denote an interval labelled by the word $\omega$ by  $I(\omega)$. See Figure~\ref{fig_03-01} left. 

\vspace{0.1in} 

\input{fig_03-01}

\vspace{0.1in} 

For a connected component that is a circle $\SS^1$, traveling along the component produces a word $\omega$, well-defined up to cyclic order. Denote a circle with label $\omega$ by $\SS^1(\omega)$. Due to cyclic invariance,  $\SS^1(\omega_1\omega_2)\cong \SS^1(\omega_2\omega_1)$, where $\omega_1,\omega_2\in \Sigma^{\ast}$ are words in $\Sigma$. Define an equivalence relation $\sim_{\circ}$ on $\Sigma^{\ast}$ via $\omega_1\omega_2\sim_{\circ} \omega_2\omega_1$ for 
 $\omega_1,\omega_2\in \Sigma^{\ast}$ and call it \emph{the circular equivalence}. Denote by 
 \[
 \Sigma^{\ast}_{\circ}\ := \  \Sigma^{\ast}/\sim_{\circ}
 \]
 the set of equivalence classes. 
We call a word $\omega\in \Sigma^{\ast}$ up to circular equivalence a \emph{circular word}. 

These two types of connected components are shown in Figure~\ref{fig_03-01}. 

\vspace{0.1in} 

Consider the category $\CCC$ of oriented one-dimensional $\Sigma$-decorated cobordisms. Its objects are sequences of signs $\varepsilon=(\varepsilon_1,\ldots, \varepsilon_k)$, $\varepsilon_i\in\{+,-\}$, including the empty sequence, denoted $\emptyset$. Morphisms are $\Sigma$-decorated oriented cobordisms. Components of cobordisms are allowed to end in the middle and not at either of the two sign sequences. Two cobordisms represent the same morphism if they are diffeomorphic rel boundary. Figure~\ref{generalized-ex-01} depicts an example of a morphism in this category.  

\input{generalized-ex-01}

This morphism has three ``floating" components (with all boundary points strictly inside the cobordism). Two of these components are intervals, with words $\emptyset$ and $b$ on them, and one component is a circle, with the cyclic word $acb$. Four other components each have one end on the boundary (top or bottom) of the cobordism and one end inside the cobordism, and carry words $\emptyset$, $a$, $ab$, and $\emptyset$ respectively. The four remaining components each have both ends on the boundary of the cobordism. Endpoints on the boundary of the cobordism can be called \emph{outer}, while endpoints strictly inside the cobordism are called \emph{inner}. Composition of morphisms is given by concatenation of cobordisms.

This category is rigid strict symmetric monoidal, with the tensor product of morphisms given by placing them next to each other. The dual of a sequence 
$\underline{\varepsilon}=(\varepsilon_1,\ldots, \varepsilon_k)$ is given by reversing its signs $+\leftrightarrow -$ and reversing their order, 
\[\underline{\varepsilon}^{\ast}\ :=\ (\overline{\varepsilon_k},\ldots, \overline{\varepsilon_1}), \hspace{4mm} \mbox{ where } 
\hspace{4mm} 
\overline{+}=-, \ \overline{-}=+. 
\] 
An example of the evaluation morphism for this rigid structure is shown below. Note that we're using the convention $(N_1\otimes N_2)^{\ast}=N_2^{\ast}\otimes N_1^{\ast}$, with the duality reversing the order of objects in the tensor product. This is made to avoid having lines in the duality morphisms intersect, which would add to the graphical complexity of the pairing between $N$ and $N^{\ast}$, when $N$ is a tensor product of several terms. 
See Figure~\ref{parallel-cups-01}.

\vspace{0.1in}

\input{parallel-cups-01}

\vspace{0.1in} 

The empty 0-manifold (the empty sequence $\emptyset$) is the identity object. 
For more details on $\widetilde{\CCC}$ we refer to~\cite[Section 3.1]{Kh3}, where this category is denoted $\widetilde{\CCC}$ and $S$ rather than $\Sigma$ is used for the set of letters.  

\vspace{0.1in} 

A Boolean evaluation $\alpha$ for the category $\CCC$ 
is a monoid homomorphism $\alpha:\End_{\CCC}(\emptyset)\lra \Bool$. It is determined by its values on homeomorphism classes of connected closed objects. The latter are of two types: decorated intervals and circles. Decorated intervals $I(\omega)$, up to homeomorphism, are classified by $\omega\in \Sigma^{\ast}$. Decorated circles $\SS^1(\omega)$ are classified by circular words $\omega\in \Sigma^{\ast}_{\circ}$. 

We see that $\alpha$ is determined by two maps 
\[  \alphai \ : \ \Sigma^{\ast} \lra \Bool, 
\hspace{4mm} 
\alpha_{\circ} \ : \ \Sigma^{\ast}_{\circ} \lra \Bool. 
\] 

These maps are classified by subsets of $\Sigma^{\ast}$ and $\Sigma^{\ast}_{\circ}$, respectively. We call a subset $L_I = \alphai^{-1}(1)\subset \Sigma^{\ast}$ \emph{the interval language} of $\alpha$ and $L_{\circ}= \alphac^{-1}(1)\subset \Sigma^{\ast}_{\circ}$ \emph{the circle language} of $\alpha$. We call the pair $L=(L_I,L_{\circ})$ the language of $\alpha$.

%%%%%%%%%%%%%%%
%  Semimodules 
%%%%%%%%%%%%%%%

\subsection{\texorpdfstring{$\Bool$}{Bool}-semimodules and duality} 
\label{subsec_semiduality} 
Given a semimodule $M$ over $\Bool$, any $a\in M$ satisfies the idempotence property $a+a=a$, since $1+1=1$ in $\Bool$. Note that $M$ has the zero element $0$. A $\Bool$-semimodule is the same as an idempotented abelian monoid. 

$\Bool$-semimodules are also known as \emph{semilattices} or \emph{join semilattices} with \emph{the minimal element}, see~\cite{CC,HMS} and references therein. Namely, $M$ has a partial order with $a\le b$ if and only if $a+b=b$. Element $0\in M$ is the minimal element of the semilattice, and the symmetric, associative operation $(a,b)\mapsto a+b$ is taking the ``supremum" $\sup(a,b)$ of $a$ and $b$, usually written as $a\vee b$. Properties of $\vee$ are then 
\begin{itemize}
    \item $a\vee b=b\vee a$ and $(a\vee b)\vee c = a \vee (b\vee c)$ for $a,b,c\in M$, 
    \item $0 \vee a = a$ for $a\in M$. 
\end{itemize}
Such semilattices with $0$ are also called \emph{$\wedgezero$-semilattices}, see~\cite{GW99}. 
In this paper, by a \emph{semilattice} we mean a semilattice with $0$, thus a \emph{$\wedgezero$-semilattice}.  

$\Bool$-semimodule $M$ is naturally a semilattice under $+$ operation, with the minimal element $0$. 
Denote by $M^{\vee}$ or just $M$ the semimodule $M$ viewed as a semilattice.
%Throughout the paper, we sometimes refer to a $\Bool$-semimodule as a $\Bool$-module as well.  

A nonzero element $a\in M$ is called \emph{irreducible} if it cannot be represented as  a sum of two  elements neither one of which is $a$. Denote by $\primitive(M)$ the set of irreducible elements of $M$.

For a $\Bool$-semimodule $M$ and  
 $S\subset M$ a set of generators of $M$, there's a surjective homomorphism $\Bool^S\lra M$ from a free semimodule to $M$ taking generators of $\Bool^S$ to those of $M$.
A semimodule $M$ over $\Bool$ is called \emph{finitely-generated} if there exists a surjective semimodule homomorphism $\Bool^n\lra M$ for some $n$. Here $\Bool^n$ is the free $\Bool$-semimodule on $n$ generators, which we can denote $v_1, \ldots, v_n$. An arbitrary element of $\Bool^n$ can be written as 
\[  v_I = \sum_{i\in I} v_i, 
\]
for a unique subset $I\subset \{1,\ldots, n\},$ with the addition rule $v_I+v_J = v_{I\cup J}$. Alternatively,  elements of $\Bool^n$ can be written as column ``vectors'' with $n$ coefficients in $\Bool$. 
A $\Bool$-semimodule is finitely-generated if and only if it's a finite set. 

\begin{prop}
\label{prop:semimod-gen}
A finitely-generated $\Bool$-semimodule $M$ is generated by its set of irreducible elements $\primitive(M)$. 
Any set $V$ of generators of $M$ contains $\primitive(M)$.
\end{prop}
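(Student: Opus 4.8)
The plan is to prove the two assertions in sequence, both by exploiting the finiteness of $M$ together with the idempotence/partial-order structure of a $\wedgezero$-semilattice. For the first assertion, that $M$ is generated by $\irr(M)$, I would argue by induction on the partial order $\le$ on $M$ (which is well-founded since $M$ is finite). Given $a\in M$ nonzero, either $a$ is irreducible, in which case it is trivially in the span of $\irr(M)$, or else by definition $a = b\vee c$ with $b\ne a\ne c$; since $b\vee c = a$ forces $b\le a$ and $c\le a$, and since $b,c\ne a$, we get $b < a$ and $c < a$ strictly. By the induction hypothesis $b$ and $c$ are each $\Bool$-linear combinations (i.e.\ finite joins) of elements of $\irr(M)$, hence so is $a = b\vee c$. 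The element $0$ is the empty join. This shows $\irr(M)$ generates $M$.

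For the second assertion, that every generating set $V$ contains $\irr(M)$, I would take an arbitrary irreducible element $a\in\irr(M)$ and show $a\in V$. Since $V$ generates $M$, we can write $a = \sum_{j\in J} v_j$ for some finite $J$ and $v_j\in V$; this is a join $a = \bigvee_{j\in J} v_j$, so each $v_j \le a$. Now use irreducibility: I claim one of the $v_j$ must equal $a$. If $|J|=1$ then $a = v_j\in V$ and we are done. If $|J|\ge 2$, pick any index $j_0\in J$ and write $a = v_{j_0}\vee\bigl(\bigvee_{j\in J\setminus\{j_0\}} v_j\bigr) =: v_{j_0}\vee c$; since $a$ is irreducible and $a = v_{j_0}\vee c$, one of the two summands must equal $a$. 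If $v_{j_0}=a$ we are done; otherwise $c = a$, and we repeat the argument with the strictly smaller index set $J\setminus\{j_0\}$. Since $J$ is finite this terminates with some $v_j = a$, proving $a\in V$. Hence $\irr(M)\subseteq V$.

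The main subtlety — really the only place one must be slightly careful — is the step "$a = b\vee c$ irreducible $\Rightarrow$ $b=a$ or $c=a$", which is exactly the definition of irreducible given in the excerpt ("cannot be represented as a sum of two elements neither one of which is $a$"), so it is a direct unwinding rather than an obstacle; and the well-foundedness needed for the induction in part one, which is immediate from $M$ being finite. One should also note that an expression $\sum_{j\in J} v_j$ in a $\Bool$-semimodule is genuinely just the join $\bigvee_{j\in J} v_j$ with no multiplicities mattering (because $1+1=1$), so peeling off one summand at a time in part two is legitimate. No deeper input is required; the two halves together say precisely that $\irr(M)$ is the unique minimal generating set of $M$.
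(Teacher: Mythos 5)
Your proof is correct and follows essentially the same route as the paper: your strict-descent induction on the finite poset is exactly the paper's ``impossibility of loops in sum decompositions'' made explicit (the computation $a=b+c$, $b=a+d\Rightarrow a=b$ is just the antisymmetry you invoke to get $b,c<a$), and your peeling argument spells out the second claim, which the paper dismisses as clear. Nothing is missing.
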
 

\begin{proof}
 The second statement is clear. For the first statement, we only need to check the impossibility of loops in sum decompositions. Namely, if $a=b+c$ and $b=a+d$ in $M$, then 
 \[  a = b+c = b+b+c = b+ a + d + c = a + a + d = a+ d = b. 
 \] 
This proves the proposition. 
\end{proof}
Proposition~\ref{prop:semimod-gen} tells us that in a finitely-generated $\Bool$-semimodule $M$, the set $\primitive(M)$ is the unique minimal set of generators. 
 When $M$ is finitely-generated (equivalently, finite), for any surjection $\Bool^X\lra M$ there exists a finite subset $J\subset X$ with $|J|=|\primitive(M)|$ such that generators in $J$ are mapped bijectively to the elements of $\primitive(M)$. In particular, a minimal free covering $\Bool^J\lra M$ is unique, with $|J|=|\primitive(M)|$.

 \vspace{0.1in} 
 
 {\it Category of $\Bool$-semimodules.}
 A morphism $f:M\lra N$ of  $\Bool$-semimodules is map of sets such that $f(a+b)=f(a)+f(b)$ for $a,b\in M$ and $f(0)=0$.   
 
 Denote by $\Bmod$ the category of $\Bool$-semimodules and semimodule maps, and by $\Bfmod$ its full subcategory of finitely-generated (equivalently, finite) $\Bool$-semimodules. Hom sets in $\Bmod$ are naturally $\Bool$-semimodules. Endomorphisms $\End_{\Bmod}(M)$ of a $\Bool$-semimodule constitute an idempotented semiring.

 Note that the free semimodule $\Bool^n$ has few automorphisms: $\Aut_{\Bmod}(\Bool^n)\cong S_n$,  the permutation group on $n$ elements (automorphisms of a free semimodule are in bijection with permutations of irreducible elements). 
 More generally, for $M\in \Ob(\Bfmod)$, the group $\Aut(M)\subset S_{\primitive(M)}$ is usually a proper subgroup. Any automorphism permutes the set of irreducible elements, giving that inclusion of groups. Lack of automorphisms (compared to the linear case) is one feature of semimodules. 
 
 \vspace{0.05in} 
 
 A $\Bool$-semimodule is the same as a $\wedgezero$-semilattice, and the category $\Bmod$ is equivalent to the category whose objects are $\wedgezero$-semilattices and morphisms are maps taking $0$ to $0$ and intertwining the $\vee$-operation. Recall that a semilattice in this paper stands for a $\wedgezero$-semilattice. 
 
 \vspace{0.1in} 
 The dual semimodule of $M$ is given by $M^{\ast}:= \Hom_{\Bmod}(M,\Bool)$. Suppose that $M$ is finitely-generated. An element $f\in M^{\ast}$ is determined by the subset $N_f\subset M$ of elements that it takes to $0$. Let $a_f=\sum_{a\in N_f} a$, $a_f\in M$ be the sum of all elements of $N_f$. Then $N_f=\{b\in M|b\le a_f\}$. Thus, each element $a\in M$ gives rise to a functional $f_a\in M^{\ast}$ with $f_a(b)=0 \Leftrightarrow b\le a$. Vice versa, each functional $f\in M^{\ast}$ comes from a unique element $a_f\in M$. We obtain a bijection of sets $M \Leftrightarrow M^{\ast}$, for finite $\Bool$-semimodules $M$, so that $|M|=|M^{\ast}|$.  
 This bijection does not preserve the addition in $M$ and is order-reversing. It takes $0\in M$ to $\sum_{f\in M^{\ast}}f$, the largest element in $M^{\ast}$, and vice versa. 
 
 Furthermore, elements of $M^{\ast}$ separate elements of $M$. Namely, if $a< b$ (so that $a+b=b$) and $f(b)=0$ then $f(a)=0$. To such a pair $(a,b)$ we can assign homomorphism $g\in M^{\ast}$ such that $g(x)=0 \Leftrightarrow x\le a$. Then $g(a)=0, g(b)=1$, and $g$ separates $a$ and $b$. 
  Suppose now that $a\nleq b$.  Then $f_b\in M^{\ast}$ satisfies $f_b(b)=0, f_b(a)=1$. Thus, the natural pairing $M\times M^{\ast}\lra \Bool$ is nondegenerate (separating).  
 
 The above implies that the natural homomorphism $M\lra M^{\ast\ast}$ is an isomorphism in $\Bfmod$. We obtain the following result~\cite{St} and see~\cite[Proposition 2.3]{CC} for a generalization to arbitrary $\Bool$-semimodules. 
 
 \begin{prop} For any finite $\Bool$-semimodule $M$ the natural bijection of sets $M\cong M^{\ast}$, described above, is order-reversing. In the natural pairing $M\times M^{\ast}\lra \Bool$ elements of $M^{\ast}$ separate elements of $M$ (and vice versa), and the natural map $M\lra M^{\ast\ast}$ is an isomorphism of $\Bool$-semimodules. The assignment $M\mapsto M^{\ast}$ extends to a contravariant involution on $\Bfmod$.
 \end{prop}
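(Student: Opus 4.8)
The plan is to assemble the observations already made in the paragraph preceding the proposition and to supply the two verifications that were left implicit: that the assignment $a\mapsto f_a$ genuinely lands in $M^{\ast}$, and that the additive evaluation map into the double dual is surjective.

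First I would set up the set bijection carefully. Given $f\in M^{\ast}$, the fiber $f^{-1}(0)$ is downward closed (since $f$ is order preserving) and closed under $\vee$ (since $f(a\vee b)=f(a)\vee f(b)$), so, $M$ being finite, it has a largest element $a_f:=\bigvee_{a\in f^{-1}(0)}a$ and equals $\{b\mid b\le a_f\}$. Conversely, for $a\in M$ the rule $f_a(b)=0\Leftrightarrow b\le a$ defines a semimodule homomorphism: it sends $0$ to $0$, and because in $\Bool$ one has $x\vee y=0$ iff $x=y=0$, the identity ``$b\vee c\le a \Leftrightarrow (b\le a$ and $c\le a)$'' is precisely $f_a(b\vee c)=f_a(b)\vee f_a(c)$. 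The maps $f\mapsto a_f$ and $a\mapsto f_a$ are mutually inverse, which already gives $|M|=|M^{\ast}|$; and since $f\le g$ in $M^{\ast}$ iff $g^{-1}(0)\subseteq f^{-1}(0)$, the bijection is order-reversing, carrying $0\in M$ to the top of $M^{\ast}$ (the functional vanishing only at $0$).

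Next I would deduce separation and the double-dual isomorphism. If $a\ne b$ in $M$, then after possibly interchanging them $a\nleq b$, whence $f_b(a)=1\ne 0=f_b(b)$, so $M^{\ast}$ separates points of $M$. The evaluation map $\ev\colon M\to M^{\ast\ast}$, $a\mapsto(f\mapsto f(a))$, is a semimodule homomorphism (immediate from $f(a\vee b)=f(a)\vee f(b)$ and $f(0)=0$), and separation makes it injective. Since $|M|=|M^{\ast}|=|M^{\ast\ast}|$, an injective map between finite sets of equal size is bijective; and a bijective $\Bool$-semimodule homomorphism is an isomorphism, its inverse being a homomorphism by a one-line check ($f(a)=x,f(b)=y$ forces $f(a\vee b)=x\vee y$, and $f(0)=0$). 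Hence $\ev$ is an isomorphism, and separation of $M^{\ast}$ by $M$ follows by transporting along $M\cong M^{\ast\ast}$.

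Finally, for functoriality I would send $\varphi\colon M\to N$ to $\varphi^{\ast}\colon N^{\ast}\to M^{\ast}$, $g\mapsto g\circ\varphi$, check it is a semimodule map with $(\psi\varphi)^{\ast}=\varphi^{\ast}\psi^{\ast}$ and $(\id_M)^{\ast}=\id_{M^{\ast}}$, giving a contravariant endofunctor of $\Bfmod$; and verify it is an involution through the naturality square $\ev_N\circ\varphi=\varphi^{\ast\ast}\circ\ev_M$, both composites sending $a\in M$ and then $g\in N^{\ast}$ to $g(\varphi(a))$. I expect no real obstacle; the only point needing a moment's care is conceptual rather than computational: the order-reversing set bijection $M\cong M^{\ast}$ is \emph{not} the identification $M\cong M^{\ast\ast}$ asserted in the statement — the latter is the additive evaluation map, and it is exactly the cardinality equality coming from the former, non-additive, bijection that forces the evaluation map to be onto.
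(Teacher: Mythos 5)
Your proposal is correct and follows essentially the same route as the paper, which establishes this proposition in the discussion immediately preceding it: the order-reversing bijection $a\mapsto f_a$, separation via $f_b$ when $a\nleq b$, and then the double-dual isomorphism. You merely make explicit what the paper leaves implicit (the homomorphism check for $f_a$, surjectivity of the evaluation map from $|M|=|M^{\ast}|=|M^{\ast\ast}|$ plus injectivity, and the naturality square for the involution), and your closing caution that the order-reversing bijection $M\cong M^{\ast}$ is distinct from the additive isomorphism $M\cong M^{\ast\ast}$ is exactly the right point to flag.
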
 
 
 \begin{remark}
 Finite $\Bool$-semimodules $M$ are the same as finite $\wedgezero$-semilattices. The above arguments show that a finite $\wedgezero$-semilattice $M$ is a lattice as well: define $a\wedge b =\sum_{c\in J} c$, where $J=\{ c| c\le a, c\le b\}$. The largest element of $M$ is $1:=\sum_{a\in M} a$, and the smallest is $0$. Thus, isomorphism classes of finite $\Bool$-semimodules are in a bijection with isomorphism classes of finite lattices (see monographs~\cite{Birk,Grat} for the theory of lattices). We denote this lattice enhancement of a finite semilattice $M$ by $M^{\vee}$ or just by $M$ when there's no possibility of confusion ($M^{\vee}$ is commonly denoted $\mathrm{Id}\, M$ in the literature). For infinite semilattices $\mathrm{Id}\, M$ is defined as the lattice of ideals of $M$, see~\cite[Section~3.15]{Grat}. 
 \end{remark}
 
 \begin{prop} A morphism $f:M\lra N$  in $\Bmod$ or $\Bfmod$ is injective (surjective) in the categorical sense if and only if the map of underlying sets is injective (surjective). 
 \end{prop}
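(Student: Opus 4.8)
The plan is to handle the four implications separately, using throughout the identification (made in the preceding paragraphs) of a $\Bool$-semimodule with a $\vee$-semilattice having least element $0$, so that a semimodule map is just a map preserving $\vee$ and $0$. Two of the implications are formal: if $f\colon M\to N$ is injective on underlying sets and $fg_1=fg_2$, then $f(g_1(x))=f(g_2(x))$ for every $x$ forces $g_1=g_2$, so $f$ is a monomorphism; dually, surjectivity of $f$ on underlying sets forces $f$ to be an epimorphism, since every $n\in N$ is some $f(m)$. For the converse "monomorphism $\Rightarrow$ injective", I would probe $f$ with maps out of the free semimodule $\Bool^1$ on one generator, which is $\Bool$ itself: a semimodule map $\Bool\to M$ may send the generator to an arbitrary element of $M$, so if $f(a)=f(b)$ with $a\ne b$, the maps $g_1,g_2\colon\Bool\to M$ sending the generator to $a$ and to $b$ are distinct but satisfy $fg_1=fg_2$, and $f$ is not mono. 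All of these arguments apply verbatim in $\Bfmod$, since $\Bool$ is finite.

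The one implication needing genuine work is "epimorphism $\Rightarrow$ surjective", which I would prove by contraposition using the correspondence between semimodule maps $N\to\Bool$ and \emph{ideals} of $N$ (subsets containing $0$, downward closed, and closed under finite joins): a map $\chi$ gives the ideal $\chi^{-1}(0)$, and an ideal $I$ gives the map $\chi_I$ with $\chi_I(x)=0\Leftrightarrow x\in I$. Assume $f$ is not surjective, put $N':=f(M)$ (a subsemilattice of $N$ containing $0$), and pick $n_0\in N\setminus N'$. Let $I_0:=\{x\in N: x\le n_0\}$ be the principal ideal, and let $I_1$ be the downward closure of $N'\cap I_0$; since $N'\cap I_0$ (the intersection of the subsemilattice $N'$ with a principal ideal) is closed under $\vee$, $I_1$ is an ideal, and $I_1\subseteq I_0$. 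One then checks $I_1\cap N'=I_0\cap N'$, while $n_0\in I_0\setminus I_1$ — membership $n_0\in I_1$ would put $n_0$ below an element of $N'\cap I_0\subseteq I_0$, forcing $n_0\in N'$. Hence $\chi_{I_0}$ and $\chi_{I_1}$ agree on $N'$ but differ at $n_0$, so $\chi_{I_0}\circ f=\chi_{I_1}\circ f$ with $\chi_{I_0}\ne\chi_{I_1}$, and $f$ is not an epimorphism. This argument is purely order-theoretic and uses no finiteness, so it settles $\Bmod$ and $\Bfmod$ at once.

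The main obstacle is precisely this last step: epimorphisms in algebraic categories need not be surjective (there is no cokernel to fall back on), so one is forced to use a structural feature of semilattices — here, the fact that ideals, equivalently maps to $\Bool$, separate an element from any subsemilattice not containing it. As an alternative in the finitely-generated case, I could instead deduce "epi $\Rightarrow$ surjective" from "mono $\Rightarrow$ injective" by invoking the contravariant self-duality $M\mapsto M^{\ast}$ established above: being an anti-equivalence with $M\cong M^{\ast\ast}$, it interchanges monomorphisms with epimorphisms, and by nondegeneracy of the pairing $M\times M^{\ast}\to\Bool$ (functionals separate points, and $\Bool$ is an injective $\Bool$-semimodule) it interchanges injections with surjections. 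I would nonetheless present the direct ideal argument, since it also disposes of the non-finitely-generated case in one stroke.
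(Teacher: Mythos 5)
Your proof is correct, and it is worth noting that the paper itself does not prove this proposition at all: it simply cites \cite[Proposition~2.7]{CC}, so your argument supplies a self-contained proof where the paper defers to the literature. The two easy directions and the implication ``mono $\Rightarrow$ injective'' (probing with the free semimodule $\Bool$, using $\Hom_{\Bmod}(\Bool,M)\cong M$ thanks to idempotence) are exactly the routine steps one expects. The genuine content is ``epi $\Rightarrow$ surjective,'' and your ideal-theoretic separation argument is sound: the correspondence between semimodule maps $N\lra\Bool$ and ideals (down-closed, join-closed subsets containing $0$) is correct, $I_0=\{x\le n_0\}$ and the down-closure $I_1$ of $N'\cap I_0$ are both ideals with $I_1\cap N'=I_0\cap N'$, and $n_0\in I_1$ would force $n_0\le z\le n_0$ for some $z\in N'$, hence $n_0=z\in N'$ by antisymmetry of the semilattice order, a contradiction; so $\chi_{I_0}$ and $\chi_{I_1}$ witness that $f$ is not epi. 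Since the only test objects used are $\Bool$ itself, the argument indeed works verbatim in both $\Bmod$ and $\Bfmod$, and it sharpens the separation statement the paper proves for finite $M$ (functionals separate points) to separating a point from a subsemimodule, with no finiteness hypothesis. Your alternative route via the duality $M\mapsto M^{\ast}$ is also fine, but, as you say, only in $\Bfmod$, and it quietly uses injectivity of $\Bool$ to convert injections into surjections under $(-)^{\ast}$; the direct ideal argument is the better choice to present.
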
 
 
 This is shown, for instance, in~\cite[Proposition~2.7]{CC}. $\square$ 
 
 \vspace{0.1in} 
 
 Dualizing a minimal surjection $\Bool^n\lra M^{\ast}$ shows that any finite $M$ is a subsemimodule of a free $\Bool$-semimodule $\Bool^n$, where $n$ is the smallest number of generators of $M^{\ast}$. Let $M$ have $m$ generators (where we take the minimal set). Writing them in the basis of $\Bool^n$ as column vectors gives a $\Bool$-valued $n\times m$ matrix $\mathbb{M}$. This is a matrix of $0$'s and $1$'s. It has the following property:   
 \begin{equation}
 \label{eq:property-coln-row} 
 \begin{split}
 \mbox{ no column is a $\Bool$-semilinear combination of some other columns } \\
 \mbox{ and no row is a $\Bool$-semilinear combination of some other rows. }   
 \end{split}
 \end{equation} 
 Taking all $\Bool$-semilinear combinations of columns gives a semimodule isomorphic to $M$. Taking all $\Bool$-semilinear combinations of rows gives a $\Bool$-semimodule isomorphic to $M^{\ast}$. (It's easy to see that the row $\Bool$-semimodule is a subsemimodule of $M^{\ast}$. The fact that it is all of $M^{\ast}$ follows, for instance, from~\cite[Theorem 1.2.3]{Ki} and equality of cardinalities $|M|=|M^{\ast}|$.)  This is a rather explicit way to realize $M$ and its dual semimodule $M^{\ast}$ as subsemimodules of the free semimodules $\Bool^n$ and $\Bool^m$. Canonical pairing $M\times M^{\ast}\lra \Bool$ is given by $\mathbb{M}$ on irreducible elements of $M$ and $M^{\ast}$.  
 
 Below is an example of a matrix with property~\eqref{eq:property-coln-row}. We see that $n$ and $m$ may be different. In this example $M$ has 6 generators and $M^{\ast}$ has 4 generators; the columns are all possible elements of $\Bool^4$ with two 1's. 
 \[\begin{pmatrix}
1 & 1 & 1 & 0 & 0 & 0 \\
1 & 0 & 0 & 1 & 1 & 0 \\
0 & 1 & 0 & 1 & 0 & 1 \\
0 & 0 & 1 & 0 & 1 & 1 \\
\end{pmatrix}
\]

\begin{remark}
Semimodules $M,M^{\ast}$ are free if $n=m$ and $\mathbb{M}$ is a permutation matrix with a single 1 in each row and column.  For a matrix with property \eqref{eq:property-coln-row}, if $n\le 3$ or $m\le 3$ then $n=m$. Nonsquare matrices 
with \eqref{eq:property-coln-row} require $n,m\ge 4$. Square matrices with $n\le 3$ and property \eqref{eq:property-coln-row} are classified below (up to permutation of rows and columns and skipping the identity matrix): 
\begin{equation}\label{eq_4_matrices}
   A_1: \ \begin{pmatrix}
0 & 1 \\
1 & 1 
\end{pmatrix}, \quad \quad  A_2: \ 
\begin{pmatrix}
0 & 0 & 1 \\
0 & 1 & 0 \\
1 & 1 & 1 
\end{pmatrix}, \quad \quad  A_3: \ 
\begin{pmatrix}
0 & 0 & 1 \\
0 & 1 & 1 \\
1 & 1 & 1 
\end{pmatrix}, \quad \quad  A_4: \ 
\begin{pmatrix}
1 & 1 & 0 \\
1 & 0 & 1 \\
0 & 1 & 1 
\end{pmatrix}.
\end{equation}
Isomorphism classes of finite $\Bool$-semimodules $M$ are in a bijection with Boolean matrices $\mathbf{M}$ with property \eqref{eq:property-coln-row} up to permutations of rows and columns. Transposing the matrix gives the dual semimodule. See Figure~\ref{proj-nonfree-mod-01}. 

\input{proj-nonfree-mod-01}

\end{remark} 

 An arbitrary $n\times m$ $\Bool$-matrix $\mathbb{M}$ that does not necessarily satisfy \eqref{eq:property-coln-row} determines a pair of dual semimodules $M$ and $M^{\ast}$. Semimodule $M\subset \Bool^n$ consists of $\Bool$-semilinear combinations of columns of $\mathbb{M}$. Dual semimodule $M^{\ast}\subset \Bool^m$ consists of $\Bool$-semilinear combinations of rows of $\mathbb{M}$, and the matrix describes the pairing between $M$ and $M^{\ast}$. Any matrix $\mathbb{M}$ can be reduced to a unique, up to permutation of rows and columns, matrix with property $\eqref{eq:property-coln-row}$ by inductively removing rows and columns that are (semi)linear combinations of other remaining rows and columns. This reduction does change the associated semimodules $M$ and $M^{\ast}$. 
 
 \vspace{0.1in} 
 
 {\it Projective semimodules.}
 A $\Bool$-semimodule $P$ is called \emph{projective} if it has a lifting property: any surjective map $M\lra N$ induces a surjective map $\Hom(P,M)\lra \Hom(P,N)$. Applying this to a surjection $p:\Bool^J\lra P$ from a free $\Bool$-semimodule to $P$ shows there is a lifting map $\iota: P\lra \Bool^J$ into a free semimodule such that $p\circ\iota = \id$,
 \begin{equation}
     \Bool^J \stackrel{p}{\lra} P \stackrel{\iota}{\lra} \Bool^J, \ \  p\circ \iota = \id_P. 
 \end{equation}
 
 Vice versa, having a pair of maps $(\iota,p)$ with this property implies that $P$ is projective. We see that a $\Bool$-semimodule is projective if and only if it's a \emph{retract} of a free semimodule. 
 
 Not every such retract is a free semimodule. For instance, the semimodule $M=\{x,y|x+y=y\}$ associated to the matrix $A_1$ in \eqref{eq_4_matrices} is a projective but not a free semimodule. A surjection 
\[ p:\Bool^2\lra M, \ \  p(v_1)=x, \ p(v_2)=y
\]
admits a section 
\[ \iota: M\lra \Bool^2, \ \ \iota(x)=v_1, \ \iota(y)=v_1+v_2
,\] 
where $\{v_1,v_2\}$ is a basis of $\Bool^2$. Semimodules given by matrices $A_2$ and $A_3$ are projective as well, but not the semimodule described by $A_4$. 
 
 \vspace{0.1in} 
 
 Injective objects in $\Bmod$ and $\Bfmod$ are defined similarly. Duality exchanges projectives and injectives in $\Bfmod$. 
 Furthermore, a finite projective $\Bool$-semimodule $P$ is a retract of a finite free semimodule $\Bool^n$. Dualizing the maps $(\iota,p)$ gives a retraction $(p^{\ast},\iota^{\ast})$ for the dual semimodule $P^{\ast}$. Consequently, such $P$ is projective if and only if it's injective. We obtain (see also~\cite[Theorem~3.4]{HMS})
 
 \begin{prop} \label{prop_frobenius} Projective and injective objects in $\Bfmod$ coincide. They are exactly retracts of finite free $\Bool$-semimodules. 
 \end{prop}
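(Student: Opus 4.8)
The plan is to prove the stronger, symmetric statement that, for a finite $\Bool$-semimodule $P$, the three conditions ``$P$ is projective'', ``$P$ is injective'', and ``$P$ is a retract of a finite free $\Bool$-semimodule'' are pairwise equivalent; the equivalence of the first and the third is already available above, so the whole task is to weave injectivity into this chain. First I would isolate the one genuinely new ingredient: that $\Bool$ itself is an injective object of $\Bfmod$. Given an inclusion $M\subseteq N$ of finite $\Bool$-semimodules and a map $g\colon M\lra \Bool$, the preimage $g^{-1}(0)$ is closed under $+$ and downward closed in $M$ (if $b\le a$ and $g(a)=0$ then $g(b)\le g(a)=0$); since $M$ is finite this forces $g^{-1}(0)$ to be the principal ideal $\{a\in M : a\le a_0\}$ with $a_0:=\sum_{g(a)=0}a$. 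Then the function $\widetilde g\colon N\lra \Bool$ defined by $\widetilde g(x)=0$ precisely when $x\le a_0$ is a semimodule map extending $g$ --- the only point to check is additivity, which follows because $x+y\le a_0$ forces both $x\le a_0$ and $y\le a_0$. From injectivity of $\Bool$ I would deduce that every finite free semimodule $\Bool^n$ is injective, being a finite product of copies of $\Bool$ and using that a finite product of surjections is a surjection, and then that any retract of a $\Bool^n$ is injective, by extending a map along an inclusion after composing with the section and then composing back with the retraction. This gives ``retract of finite free $\Rightarrow$ injective'', hence ``projective $\Rightarrow$ injective''.

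For the reverse implication I would use the fact recorded earlier that every finite $\Bool$-semimodule, in particular an injective one $I$, embeds into some finite free semimodule $\Bool^n$ (dualize a minimal surjection $\Bool^n\lra I^{\ast}$ and use $I^{\ast\ast}\cong I$). Applying injectivity of $I$ to this embedding $\iota\colon I\hookrightarrow \Bool^n$ and to $\id_I\colon I\lra I$ yields a map $r\colon \Bool^n\lra I$ with $r\circ\iota=\id_I$, so $I$ is a retract of a finite free semimodule and therefore projective by the characterization of projectives already in place. Combining the two directions gives the coincidence of projectives and injectives in $\Bfmod$, both being exactly the retracts of finite free $\Bool$-semimodules. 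As an alternative to proving ``$\Bool$ is injective'' by hand, one could argue by duality, since $P$ is a retract of $\Bool^n$ iff $P^{\ast}$ is a retract of $(\Bool^n)^{\ast}\cong\Bool^n$, so the contravariant involution $(-)^{\ast}$ fixes this class of semimodules while exchanging projectives with injectives; but that route conceals the same content, namely that $(-)^{\ast}$ carries monomorphisms to epimorphisms, which is just a restatement of the injectivity of $\Bool$.

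The step I expect to be the real obstacle is exactly the injectivity of $\Bool$ --- that is, the extension lemma for $\Bool$-valued functionals defined on a subsemimodule of a finite semimodule. Everything after that is formal: finite products of injectives, retracts, and the already-established embedding of a finite semimodule into a free one. Within that step the only substantive small computation is checking that the candidate extension $\widetilde g$ is additive, and finiteness of the semimodules enters precisely to guarantee that $g^{-1}(0)$ is principal so that $a_0$ makes sense.
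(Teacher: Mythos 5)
Your proof is correct, and it takes a genuinely more self-contained route than the paper. The paper disposes of this proposition in two lines: it asserts that the duality $(-)^{\ast}$ exchanges projectives and injectives in $\Bfmod$ (with a pointer to Horn--Kimura-type results, \cite[Theorem 3.4]{HMS}), notes that a finite projective $P$ is a retract $(\iota,p)$ of some $\Bool^n$, and dualizes the pair to get a retraction $(p^{\ast},\iota^{\ast})$ exhibiting $P^{\ast}$ as a retract of $(\Bool^n)^{\ast}\cong\Bool^n$, from which the coincidence of projectives and injectives follows. You instead prove the hidden ingredient outright: the extension lemma showing $\Bool$ is injective in $\Bfmod$ (your verification that $g^{-1}(0)$ is a principal down-set $\{a\le a_0\}$ and that $\widetilde g$ is additive because $x+y\le a_0$ forces $x\le a_0$ and $y\le a_0$ is exactly right), then injectivity of $\Bool^n$ as a finite product, injectivity of retracts, and for the converse the embedding $I\hookrightarrow\Bool^n$ (already available in the paper via dualizing a minimal surjection onto $I^{\ast}$) split by injectivity, giving $I$ as a retract of a finite free module and hence projective. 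Your closing remark is also accurate: the paper's appeal to ``duality exchanges projectives and injectives'' is equivalent to the statement that $(-)^{\ast}$ sends monomorphisms to epimorphisms, which is precisely the injectivity of $\Bool$ you prove by hand; so your argument buys a fully self-contained proof at the cost of a short explicit lemma, while the paper's buys brevity by outsourcing that content to the duality formalism and the cited reference.
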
 
 
Note that a semimodule injective (or projective) in $\Bfmod$ is also injective (or projective) in the bigger category $\Bmod$. 

\begin{remark}
The semilinear category $\Bfmod$ of finite $\Bool$-semimodules is \emph{Frobenius} -- injective objects in it coincide with projectives. In particular, one can form the stable category $\Bfsmod$ of finitely-generated $\Bool$-semimodules. It's a quotient category of $\Bfmod$, and morphisms $f_0,f_1\in \Hom_{\Bfmod}(M,N)$ are equal in $\Bfsmod$ if there exist morphisms $g_0,g_1\in \Hom_{\Bfmod}(M,N)$ that factor through a free semimodule such that $f_0+g_0= f_1+g_1$. It's possible, however, that a better notion of the stable category should involve doubling of morphisms, similar to the constructions in~\cite{CC}. 
\end{remark}

As mentioned earlier, any finite semilattice with $0$ is necessarily a finite lattice, in a unique way. Retracts of free semimodules are distinguished among objects of $\Bfmod$ by the condition that the corresponding lattice is distributive, see~\cite[Section~3]{HMS}.   Birkhoff's representation theorem states that elements of any finite distributive lattice can be represented as finite sets,  so that the lattice operations correspond to unions and intersections of sets~\cite{Birk,Grat}. In particular, any finite distributive lattice can be realized as the lattice of open sets of a finite topological space, and all such lattices are distributive. 

The earlier proposition can be enhanced to the following statement, see \cite[Theorem~3.4]{HMS}.

\begin{prop}
\label{prop_many_char}
Let $P$ be a finite $\Bool$-semimodule. The following conditions are equivalent:
\begin{enumerate}
    \item $P$ is projective in $\Bfmod$. 
    \item $P$ is injective in $\Bfmod$. 
    \item $P$ is a retract of a free $\Bool$-semimodule. 
    \item The lattice $P^{\wedge}$ associated to $P$ is distributive. 
    \item  $P$ is the semilattice of open sets of a finite topological space (with $U+V:=U \cup V$). 
    \item Endomorphism semiring $\End(P)$ is generated, as a semimodule, by maps that factor through $\Bool$ (maps $P\lra \Bool \lra P$).  
\end{enumerate}
\end{prop}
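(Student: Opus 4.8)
The plan is to establish the chain of equivalences by combining results already available (Propositions~\ref{prop_frobenius} and, implicitly, the remarks on distributive lattices and Birkhoff's theorem) with a direct analysis of the endomorphism semiring. The equivalence of (1), (2), (3) is already contained in Proposition~\ref{prop_frobenius}, so the new content is linking these to the lattice-theoretic and topological conditions (4), (5) and to the endomorphism condition (6). First I would dispatch (4)$\Leftrightarrow$(5) using Birkhoff's representation theorem: a finite distributive lattice is precisely the lattice of down-sets of its poset of join-irreducibles, and the down-sets of a finite poset $X$ are exactly the open sets of $X$ equipped with the Alexandrov topology; conversely the open-set lattice of any finite topological space is distributive since unions and intersections distribute. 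Under the dictionary $M\mapsto M^{\wedge}$ from finite $\Bool$-semimodules to finite lattices, the semilattice operation $+$ corresponds to $\vee$, which matches $\cup$ of open sets, so (4) and (5) say the same thing about $P$.

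Next I would prove (3)$\Leftrightarrow$(4), which is the substantive topological/lattice step and can be cited from~\cite[Section~3]{HMS} but is worth sketching. For (3)$\Rightarrow$(4): a retract of a free semimodule $\Bool^n$ is a sublattice of the Boolean lattice $2^{[n]}$ closed under the retraction, and one checks the retraction $r=p\circ\iota$ (an idempotent endomorphism) has image closed under both $\vee=\cup$ and the induced $\wedge$; since $2^{[n]}$ is distributive and distributivity passes to this retract-sublattice, $P^{\wedge}$ is distributive. For (4)$\Rightarrow$(3): if $P^{\wedge}$ is distributive, realize it by Birkhoff as the lattice of down-sets of the poset $\primitive(P)$ of irreducibles; embedding $\primitive(P)$ into $\{1,\dots,n\}$ and sending a down-set to its indicator vector realizes $P$ as a subsemimodule of $\Bool^n$, and the "closure of a subset to its generated down-set" map gives the retraction $\Bool^n\to P$. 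Equivalently, and more in the spirit of the paper, one uses the matrix $\mathbb{M}$ with property~\eqref{eq:property-coln-row}: distributivity of $P^{\wedge}$ is exactly the condition that $\mathbb{M}$ admits such a retraction.

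Finally I would handle (3)$\Leftrightarrow$(6), the endomorphism-semiring criterion. For (3)$\Rightarrow$(6): if $P$ is a retract of $\Bool^n$ via $\Bool^n\xrightarrow{p}P\xrightarrow{\iota}\Bool^n$ with $p\iota=\id_P$, write $\id_{\Bool^n}=\sum_{k=1}^n e_k$ where $e_k$ is the rank-one idempotent picking out the $k$-th coordinate (which factors as $\Bool^n\to\Bool\to\Bool^n$ through the $k$-th projection and inclusion); then $\id_P=p\,\id_{\Bool^n}\,\iota=\sum_k (p\,e_k\,\iota)$, and each summand $p\,e_k\,\iota$ factors through $\Bool$; since $\End(P)$ is a semimodule and any $f\in\End(P)$ satisfies $f=f\circ\id_P=\sum_k f\,p\,e_k\,\iota$ with each term factoring through $\Bool$, condition (6) follows. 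For (6)$\Rightarrow$(3): if $\id_P$ is a finite $\Bool$-semilinear combination $\id_P=\sum_{k=1}^n \beta_k$ of maps $\beta_k: P\xrightarrow{g_k}\Bool\xrightarrow{h_k}P$, assemble the $g_k$ into a map $g=(g_1,\dots,g_n):P\to\Bool^n$ and the $h_k$ into $h:\Bool^n\to P$ (sending the $k$-th generator to $h_k(1)$); then $h\circ g=\sum_k h_k g_k=\sum_k\beta_k=\id_P$, exhibiting $P$ as a retract of $\Bool^n$. I expect the main obstacle to be the (4)$\Rightarrow$(3) direction — precisely, verifying that the Birkhoff embedding of a distributive $P^{\wedge}$ into a power set genuinely respects the semimodule structure (not merely the order) and that the "down-set closure" is semilinear; this is where the distributivity hypothesis is actually consumed, since for a non-distributive lattice the analogous closure map fails to be a semimodule homomorphism (as the matrix $A_4$ in~\eqref{eq_4_matrices} illustrates). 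The remaining implications are essentially bookkeeping once the dictionary between semimodules, lattices, matrices with property~\eqref{eq:property-coln-row}, and finite topological spaces is in place.
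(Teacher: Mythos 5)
Your proposal is correct in outline, and it is genuinely more self-contained than the paper's treatment: the paper establishes (1)$\Leftrightarrow$(2)$\Leftrightarrow$(3) in Proposition~\ref{prop_frobenius} and then simply cites \cite[Theorem~3.4]{HMS} for the remaining equivalences, whereas you supply a Birkhoff argument for (3)$\Leftrightarrow$(4)$\Leftrightarrow$(5) and a direct argument for (3)$\Leftrightarrow$(6). Your treatment of (6) is the nicest part: writing $\id_{\Bool^n}=\sum_k e_k$ with each $e_k$ factoring through $\Bool$ and transporting this along a retraction, and conversely reassembling a relation $\id_P=\sum_k h_k\circ g_k$ into maps $g:P\lra\Bool^n$, $h:\Bool^n\lra P$ with $h\circ g=\id_P$, is exactly the mechanism that reappears later in the paper as the coevaluation element and the decomposition of the identity for a language, so your route makes the role of condition (6) transparent rather than leaving it inside a citation. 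Your (4)$\Rightarrow$(3) step (down-sets of the poset of irreducibles, with the down-closure map $2^{X}\lra \mathrm{Down}(X)$ as the retraction, which is indeed semilinear since down-closure commutes with unions and sends $\emptyset$ to $\emptyset$) is also fine as written.

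Two local corrections. First, with the paper's conventions $p:\Bool^n\lra P$, $\iota:P\lra\Bool^n$, $p\circ\iota=\id_P$, the relevant idempotent on $\Bool^n$ is $e=\iota\circ p$, not $p\circ\iota$. Second, and more substantively, in (3)$\Rightarrow$(4) the image of $e$ is only a sub-join-semilattice of $\Bool^n$: it need not be closed under intersection, so it is not a sublattice of $2^{[n]}$ and the phrase ``distributivity passes to this retract-sublattice'' does not apply literally. For instance, $e(v_1)=v_1+v_2$, $e(v_2)=0$, $e(v_3)=v_2+v_3$ is idempotent on $\Bool^3$, and its image contains $v_1+v_2$ and $v_2+v_3$ but not their intersection $v_2$. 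The conclusion still holds, and the repair is short: verify directly the semilattice distributivity condition recalled in Remark~\ref{remark_distributive} (with $\vee=+$). If $x,a,b\in P$ and $x\le a+b$, then inside $\Bool^n$ one has $x=(x\cap a)+(x\cap b)$, and applying $e$ (which fixes $P$ pointwise and is monotone) gives $x=e(x\cap a)+e(x\cap b)$ with $e(x\cap a)\le a$, $e(x\cap b)\le b$, both lying in $P$; by the lemma quoted in that remark this is equivalent to distributivity of $P^{\wedge}$. Alternatively one can fall back on \cite[Section~3]{HMS}, as you suggest. With these repairs your argument goes through.
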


\begin{remark} \label{remark_distributive} A $\wedgezero$-semilattice $M$ is called \emph{distributive} if for any $x\le a \wedge b$ there exist $a'\le a,b'\le b$ such that $x=a'\wedge b'$. Lemma~184 in~\cite[Section 5.1]{Grat}, restricted to finite semilattices, implies that a finite $\wedgezero$-semilattice $M$ is distributive if and only if the corresponding lattice $M^{\wedge}$ is distributive. This gives yet another characterization of projective finite $\Bool$-semimodules. 
\end{remark} 

Finite projective semimodules are further singled out in Proposition~\ref{prop_proj_coev} and Corollary~\ref{cor_proj_rigid} below.

\begin{remark} Finite topological spaces naturally appear from stratified spaces. A stratified space $S$ with finitely many strata gives rise to a finite space $X(S)$ whose points are the  strata of $S$ and open sets - unions of strata which are open in $S$. Finite space $X(S)$ is just the quotient of $S$ by the equivalence relation given by the decomposition into strata. Note that a stratified space gives rise to the categories of constructible sheaves and perverse sheaves on it. O.~Viro discussed finite topological spaces in the second part of his talk ``Compliments to Bad Spaces''~\cite{Viro}.
\end{remark} 

{\it Tensor product.}
The hom space $\Hom_{\Bmod}(M,N)$ of two semimodules is naturally a semimodule, so that categories $\Bmod$ and $\Bfmod$ have internal homs. Tensor product $M\otimes N$ of $\Bool$-semimodules is defined to consist of finite sums $\sum_i m_i\otimes n_i$ modulo the equivalence  relation generated by 
\begin{eqnarray*}
0 \otimes n & \sim & 0 \ \  \sim \ \ m\otimes 0, \\
(m_1+m_2)\otimes n & \sim &  m_1\otimes n + m_2\otimes n, \\
m\otimes (n_1+n_2) & \sim &  m\otimes n_1 + m \otimes n_2,
\end{eqnarray*}
where $0$ is the empty sum. To define the tensor product more intrinsically (see~\cite{GLQ}), call a map $f:M\times N \lra K$, for $K\in \Ob(\Bmod)$,   \emph{bisemilinear} if for each $m\in M$ and $n\in N$ the maps 
$f(m,-)\in \Hom_{\Bmod}(N,K)$ and $f(-,n)\in \Hom_{\Bmod}(M,K)$. 

The tensor product $M\otimes N$ is determined by the property that 
there exist a bisemilinear map $\phi: M\times N \lra M\otimes N$ such that any bisemilinear map $f:M\times N\lra K$ into a $\Bool$-semimodule $K$ extends to a unique semimodule homomorphism $g:M\otimes N\lra K$, $f=g\phi$.

 In addition to~\cite{GLQ}, we also refer the reader to related papers on tensor products of semilattices~\cite{BM78,AK78, GW99,GW00} and references there. 
(Fraser~\cite{Fra76} defined and studied the tensor product in the related category of semilattices without zero.) 

\vspace{0.05in} 

Tensor product $\otimes$ turns $\Bmod$ into a symmetric monoidal category with the identity object $\one = \Bool$. There is a natural isomorphism 
\begin{equation} \label{eq_adjunction} \Hom_{\Bmod}(M\otimes N,K)\cong \Hom_{\Bmod}(M, \Hom_{\Bmod}(N,K))
\end{equation}
making the tensor product functor left adjoint to the internal homs functor. Together with the dualizing object $\Bool$, this makes $\Bfmod$ into a $\ast$-autonomous category~\cite{Bar79,Bar91} (these references describe $\star$-autonomous structure on the larger category of complete semilattices). For finite $M,N$ there are also natural isomorphisms
 \begin{equation} \label{eq_other}
 M\otimes N \cong \Hom_{\Bmod}(M,N^{\ast})^{\ast}, \hspace{4mm}   
 \Hom_{\Bmod}(M,N)\cong (M\otimes N^{\ast})^{\ast},
 \end{equation} 
 obtained by setting $K=\Bool$ in \eqref{eq_adjunction} and dualizing. 
 
\vspace{0.1in}

{\it Failure of duality.}
There is a natural homomorphism 
\begin{equation}\label{map_not_iso}
    \psi_{M,N}\ : \ M^{\ast}\otimes N\lra \Hom(M,N), \ \ \ \  (m^{\ast}\otimes n)(m') \:= m(m')n, \ \ m'\in M, m^{\ast}\in M^{\ast},n\in N.
\end{equation}

An important point is that, in general, 
the map $\psi_{M,N}$ is not an isomorphism even for finite $M,N$. A counterexample is furnished by taking $M=N$ to be the semimodule given by the matrix $A_4$ in \eqref{eq_4_matrices}. Equivalently, $M=N=\{x,y,z|x+y=x+z=y+z\}$. Then, under \eqref{map_not_iso}, no element of $M^{\ast}\otimes M$ is mapped to the identity endomorphism of $M$, which can be checked directly. Consequently, the category $\Bfmod$ is not compact closed with respect to the above maps $\psi_{M,N}$.

Via isomorphisms \eqref{eq_other}, failure of $\psi_{M,N}$ to be isomorphisms is equivalent to 
natural maps below not being isomorphisms either, for general $M,N$.
\begin{equation}\label{eq_dual_not_iso} 
 \ M^{\ast}\otimes N^{\ast}  \lra (N\otimes M)^{\ast}, \ \ 
 \Hom(M, N)^{\ast}  \lra \Hom(M^{\ast},N^{\ast}). 
\end{equation}
Barr~\cite[Section 6]{Bar79} shows the absence of any compact closed structure on this monoidal category by checking that the cardinalities of sets 
$\Hom(M,N)$ and $\Hom(M^{\ast},N^{\ast})$ are different for two particular finite semilattices $M,N$ (a finite semilattice has the same cardinality as its dual). 

\begin{prop}[Barr~\cite{Bar79}] 
 The category $\Bfmod$ is a $\ast$-autonomous category but not a compact closed category (not a rigid symmetric monoidal category). 
\end{prop}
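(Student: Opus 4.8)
The plan is to treat the two halves of the statement separately; in both cases the work is mostly organizing facts already assembled above rather than proving anything genuinely new.

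\emph{$\ast$-autonomy.} The tensor product $\otimes$ makes $\Bfmod$ a symmetric monoidal category with unit $\one=\Bool$, and the adjunction \eqref{eq_adjunction} says exactly that $\Hom_{\Bmod}(N,-)$ is right adjoint to $-\otimes N$, so $\Bfmod$ is symmetric monoidal closed with internal hom $\Hom_{\Bmod}(M,N)$. Take the unit $\Bool$ as the dualizing object, so that $M^{\ast}=\Hom_{\Bmod}(M,\Bool)$. The only remaining axiom of a $\ast$-autonomous category is that the canonical map $M\to M^{\ast\ast}$ be an isomorphism for every finite $M$, which is precisely the duality proposition above. (If one prefers the $\Hom$-formulation, combining \eqref{eq_adjunction} with \eqref{eq_other} and $K^{\ast\ast}\cong K$ yields the natural isomorphism $\Hom_{\Bmod}(M\otimes N,K^{\ast})\cong\Hom_{\Bmod}(M,(N\otimes K)^{\ast})$.) Assembling these pieces gives the first assertion, following Barr~\cite{Bar79}.

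\emph{Failure of compact closure.} Here I would argue by contradiction, exploiting the fact that a compact closed refinement of a fixed symmetric monoidal closed category leaves essentially no freedom. Suppose $\Bfmod$, with the tensor $\otimes$ and unit $\Bool$ above, were compact closed with duals $M\mapsto M^{\vee}$. The internal hom of a symmetric monoidal closed category is the right adjoint of $-\otimes N$, hence unique up to canonical natural isomorphism, so it must be $\Hom_{\Bmod}(M,N)$. In a compact closed category there is a natural isomorphism $\Hom_{\Bmod}(M,N)\cong M^{\vee}\otimes N$; specializing to $N=\Bool$ forces $M^{\vee}\cong\Hom_{\Bmod}(M,\Bool)=M^{\ast}$, so the dual must be the one already defined. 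A standard mate computation then identifies the comparison isomorphism $M^{\vee}\otimes N\to\Hom_{\Bmod}(M,N)$ with the canonical map $\psi_{M,N}$ of \eqref{map_not_iso}. Thus, were $\Bfmod$ compact closed, $\psi_{M,N}$ would be an isomorphism for all finite $M,N$. This is false: taking $M=N$ to be the semimodule $\{x,y,z\mid x+y=x+z=y+z\}$ attached to the matrix $A_4$ in \eqref{eq_4_matrices}, no element of $M^{\ast}\otimes M$ is carried by $\psi_{M,M}$ to $\id_M$, so $\psi_{M,M}$ is not even surjective. Alternatively one recovers Barr's original cardinality argument: if $\psi$ were an isomorphism then, using \eqref{eq_other} and $|K|=|K^{\ast}|$ for finite $\Bool$-semimodules, $|\Hom_{\Bmod}(M,N)|=|M^{\ast}\otimes N|=|\Hom_{\Bmod}(M^{\ast},N^{\ast})|$ for all finite $M,N$, which Barr~\cite[Section~6]{Bar79} disproves with an explicit pair of finite semilattices. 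Either route shows $\Bfmod$ carries no rigid (compact closed) symmetric monoidal structure refining $\otimes$.

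The delicate point, and the step I would write out most carefully, is the forcing argument: one must check that a hypothetical compact closed structure genuinely pins down both the dual ($M^{\vee}\cong M^{\ast}$) and the comparison morphism (as $\psi_{M,N}$), so that the already-known failure of the \emph{particular} map $\psi_{M,N}$ to be an isomorphism is enough to exclude \emph{every} such structure. This uses only uniqueness of adjoints and the mate calculus behind $\Hom_{\Bmod}(M,N)\cong M^{\vee}\otimes N$, but it is the one place where care is needed; the $\ast$-autonomy half is pure bookkeeping.
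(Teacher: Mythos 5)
Your argument is correct, but it takes a genuinely different route from the paper on the second half. The paper gives no proof of its own: the $\ast$-autonomy is assembled exactly as you do, from the adjunction \eqref{eq_adjunction}, the dualizing object $\Bool$ and $M\cong M^{\ast\ast}$, while the non-existence of \emph{any} compact closed structure is simply delegated to Barr~\cite[Section 6]{Bar79}, whose argument compares the cardinalities of $\Hom(M,N)$ and $\Hom(M^{\ast},N^{\ast})$ for an explicit pair of finite semilattices. Indeed the paper is careful to say that the failure of $\psi_{M,N}$ in \eqref{map_not_iso} only shows $\Bfmod$ is not compact closed \emph{with respect to those particular maps}, and then quotes Barr for the stronger statement. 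Your contribution is precisely to close that gap internally: by uniqueness of right adjoints of $-\otimes M$ and the transpose computation, any hypothetical duality $(-)^{\vee}$ on $(\Bfmod,\otimes,\Bool)$ forces $M^{\vee}\cong M^{\ast}$ and forces the comparison isomorphism $M^{\vee}\otimes N\to\Hom_{\Bmod}(M,N)$ to factor as $\psi_{M,N}$ composed with the isomorphism $M^{\vee}\otimes N\cong M^{\ast}\otimes N$, so $\psi_{M,N}$ itself would be invertible; the $A_4$ semimodule of \eqref{eq_4_matrices}, where $\id_M$ is not in the image of $\psi_{M,M}$, then kills every such structure. What each approach buys: yours is self-contained given the paper's own counterexample and makes explicit why one fixed natural transformation suffices to exclude all rigid structures, at the cost of the mate/uniqueness-of-adjoint bookkeeping (which you rightly flag as the only delicate step, and which does go through); Barr's route avoids that bookkeeping entirely by reducing to a cardinality count, but needs his separate pair of semilattices rather than the $A_4$ example. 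Your closing remark recovering the cardinality criterion from \eqref{eq_other} and $|K|=|K^{\ast}|$ is also correct. One small caution in the write-up: state the identification as ``the comparison map equals $\psi_{M,N}$ up to the induced isomorphism $M^{\vee}\otimes N\cong M^{\ast}\otimes N$'' rather than literally equal to $\psi_{M,N}$, since the equality only holds after transporting along $\iota_M:M^{\vee}\to M^{\ast}$.
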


{\it Projective semimodules and coevaluation maps.}
A finite projective $\Bool$-semimodule $P$ is a retract of a free semimodule $\Bool^n$, via the maps $P\stackrel{\iota}{\lra}\Bool^n\stackrel{p}{\lra}P$ with $p\circ\iota = \id_P$.  Form the diagonal 
\begin{equation}
    d = \sum_{i=1}^n e_i \otimes e_i^{\ast}\in \Bool^n\otimes (\Bool^n)^{\ast},
\end{equation}
where $(e_1,\ldots, e_n)$ is the standard basis of $\Bool^n$. Composing it with maps $p:\Bool^n\lra P$ and $\iota^{\ast} :(\Bool^n)^{\ast}\lra P^{\ast}$, respectively, gives an element 
\begin{equation}\label{eq_coev_map}
    \coev_{P} \ := \ \sum_{i=1}^n p(e_i) \otimes \iota^{\ast}(e_i^{\ast})\in P \otimes P^{\ast}. 
\end{equation}
Relation $p\circ\iota = \id_P$ together with the definition of the dual morphism imply that $\coev_{P}$ satisfies the isotopy relation in Figure~\ref{cupcap} (with $X$ replaced by $P$ there), and likewise for $P^{\ast}$ and $\coev_{P^\ast}$ in place of $P$ or $\coev_P$.  The ``cap'' morphism  $\ev_P: P^{\ast}\otimes P\lra \Bool$ is the usual evaluation map $\ev_P(y^{\ast}\otimes y)=y^{\ast}(y).$

\begin{prop}\label{prop_proj_coev} For a finite projective $\Bool$-semimodule $P$ coevaluation and evaluation maps $\coev_P$ and $\ev_P$ satisfy the isotopy relations in Figure~\ref{cupcap}.
 Map $\psi_{P,P}:P^{\ast}\otimes P\lra \End(P)$ given by \eqref{map_not_iso} is an isomorphism.  The identity $\id_P$ is in the image of $\psi_{P,P}$.
\end{prop}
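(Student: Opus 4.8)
The plan is to deduce everything from the single identity $p\circ\iota=\id_P$ for the retract $P\stackrel{\iota}{\lra}\Bool^n\stackrel{p}{\lra}P$ that witnesses projectivity of $P$, bearing in mind throughout that $\Bool$ has no subtraction, so that all arguments must proceed through semimodule homomorphisms and their values on generators rather than through kernels. At the level of principle the statement is an instance of the fact that a retract of a dualizable object in a symmetric monoidal category is again dualizable: $\Bool^n$ is self-dual with the obvious (co)evaluation, $P$ is a retract of it, and \eqref{eq_coev_map} is precisely the induced coevaluation, so $P$ acquires a dual, which forces $\psi_{P,P}$ to be an isomorphism $P^{\ast}\otimes P\cong\End(P)$. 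Still, since the explicit inverse is needed in later sections, I would carry out the verifications by hand.

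For the isotopy relations of Figure~\ref{cupcap} (with $X=P$) I would compute the two zigzag composites directly. Given $x\in P$, write $\iota(x)=\sum_j c_j e_j$ with $c_j\in\Bool$. Using the definition \eqref{eq_coev_map} of $\coev_P$, the definition of the dual morphism $\iota^{\ast}$ (so that $(\iota^{\ast}(e_i^{\ast}))(x)=e_i^{\ast}(\iota(x))=c_i$), and $\ev_P(y^{\ast}\otimes y)=y^{\ast}(y)$, the first composite sends $x$ to $\sum_i c_i\,p(e_i)=p(\iota(x))=x$; the second, applied to $y^{\ast}\in P^{\ast}$, produces $\sum_i y^{\ast}(p(e_i))\,\iota^{\ast}(e_i^{\ast})$, which evaluated on an arbitrary $x$ equals $\sum_i y^{\ast}(p(e_i))\,e_i^{\ast}(\iota(x))=y^{\ast}(p(\iota(x)))=y^{\ast}(x)$ by $\Bool$-linearity of $y^{\ast}$ and $p$, hence equals $y^{\ast}$. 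So both composites are identities.

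For the isomorphism claim I would write down an explicit inverse: using the symmetry of $\Bmod$, set $\coev'_P=\sum_i\iota^{\ast}(e_i^{\ast})\otimes p(e_i)\in P^{\ast}\otimes P$ and define the semimodule map $\phi\colon\End(P)\lra P^{\ast}\otimes P$ by $\phi(f)=\sum_i\iota^{\ast}(e_i^{\ast})\otimes f(p(e_i))$. Then $\psi_{P,P}\circ\phi=\id_{\End(P)}$ follows by evaluating on $m'\in P$ and substituting $\iota(m')=\sum_i c_i e_i$, while $\phi\circ\psi_{P,P}=\id_{P^{\ast}\otimes P}$ is checked on a simple tensor $m^{\ast}\otimes n$, where it reduces to the identity $\sum_i m^{\ast}(p(e_i))\,\iota^{\ast}(e_i^{\ast})=m^{\ast}$ in $P^{\ast}$ (again from $p\iota=\id$), and is then extended by additivity since simple tensors generate $P^{\ast}\otimes P$. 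Finally $\id_P$ lies in the image because $\id_P=\psi_{P,P}(\coev'_P)$ (equivalently $\coev'_P=\phi(\id_P)$). The computations are elementary; the one genuine obstacle is doing the bookkeeping correctly in the absence of subtraction and keeping straight the conventions for the order of factors in $P\otimes P^{\ast}$ versus $P^{\ast}\otimes P$ and for the placement of $\ev_P$ consistent with Figure~\ref{cupcap}.
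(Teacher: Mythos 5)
Your proof is correct and follows essentially the same route as the paper, which defines $\coev_P$ from the retract $P\stackrel{\iota}{\lra}\Bool^n\stackrel{p}{\lra}P$ and observes that $p\circ\iota=\id_P$ together with the definition of the dual morphism yields the isotopy (zigzag) relations. The paper asserts the isomorphism of $\psi_{P,P}$ without exhibiting an inverse; your explicit map $\phi(f)=\sum_i\iota^{\ast}(e_i^{\ast})\otimes f(p(e_i))$ is the standard bending inverse, and your verifications (including checking $\phi\circ\psi_{P,P}=\id$ on simple tensors and extending by additivity, which suffices since they generate $P^{\ast}\otimes P$) are sound.
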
 

Consequently, endofunctors of tensoring with $P$ and $P^{\ast}$ in the category $\Bmod$ are adjoint (even biadjoint due to the symmetric structure). Adjointness and biadjointness property fails if we substitute $P$ by a finite non-projective semimodule $M$. To see this, assume given a coevaluation map 
\begin{equation}\label{coev_open_sets_3} 
    \coev_M \ : \ \Bool \lra M\otimes M^{\ast}, \ \ 
    \coev_M(1) = \sum_{i=1}^n a_i \otimes b_i.
\end{equation}

Elements $b_i\in M^{\ast}$ are maps $M\lra\Bool$ which assemble into a semimodule map $b:M\lra \Bool^n$. Elements $a_i\in M$ give a map $a:\Bool^n\lra M$ with $b(e_i)=a_i$. Isotopy relation on $\ev$ and $\coev$ says that 
\begin{equation}
    m\longmapsto \sum_i a_i \otimes b_i\otimes m \lra \sum_i b_i(m) a_i  = m ,  
\end{equation}
that is, $a\circ b = \id_M$. Consequently, $b,a$ realize $M$ as a retract of $\Bool^n$, and $M$ is a projective $\Bool$-semimodule. 

One can further assume that $a_1,\ldots, a_n$ in \eqref{coev_open_sets_3} constitute the set of distinct irreducible elements of $M$. This gives a formula for $\coev_M$ with the smallest possible $n$ (the cardinality of $\irr(M)$). 

We see that, at least for the standard evaluation map $\ev_M$, coevaluation map exists if and only if $M$ is a finite projective $\Bool$-semimodule.

\vspace{0.1in} 

 For a finite $\Bool$-semimodule $M$ denote by $\Bmodo{M}$ the full subcategory of $\Bmod$ monoidally generated by $M$ and $M^{\ast}$ (so that objects of $\Bmodo{M}$ are tensor products of $M$ and its dual).   

\begin{corollary} \label{cor_proj_rigid} For a finite projective $\Bool$-semimodule $P$ the category $\Bmodo{P}$ is a rigid symmetric monoidal category.  
\end{corollary}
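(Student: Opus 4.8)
The plan is to deduce this from Proposition~\ref{prop_proj_coev}, which already equips the generating object $P$ (and its dual) with evaluation and coevaluation maps satisfying the triangle identities, and then to propagate rigidity to arbitrary tensor products by the standard categorical fact that a monoidal subcategory generated by a dual pair of objects is automatically rigid. First I would note that $P^{\ast}$ is again a finite projective $\Bool$-semimodule: if $\iota\colon P\to\Bool^n$ and $p\colon\Bool^n\to P$ exhibit $P$ as a retract of a free semimodule, then $\iota^{\ast}$ and $p^{\ast}$ exhibit $P^{\ast}$ as a retract of $(\Bool^n)^{\ast}\cong\Bool^n$; this is Proposition~\ref{prop_frobenius}. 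Hence Proposition~\ref{prop_proj_coev} applies both to $P$ and to $P^{\ast}$, so that $\ev_P,\coev_P$ and $\ev_{P^{\ast}},\coev_{P^{\ast}}$ satisfy the isotopy relations of Figure~\ref{cupcap}; combined with the canonical isomorphism $P^{\ast\ast}\cong P$, this exhibits $P$ and $P^{\ast}$ as a dual pair inside $\Bmod$, each being the dual of the other.

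Next I would invoke the elementary fact, valid in any symmetric monoidal category, that a tensor product of dualizable objects is dualizable: if $X$ has dual $X^{\ast}$ with $(\ev_X,\coev_X)$ and $Y$ has dual $Y^{\ast}$ with $(\ev_Y,\coev_Y)$, then $X\otimes Y$ has dual $Y^{\ast}\otimes X^{\ast}$ (matching the paper's convention $(X\otimes Y)^{\ast}=Y^{\ast}\otimes X^{\ast}$), with evaluation
\[
\ev_{X\otimes Y}\;=\;\ev_Y\circ(\id_{Y^{\ast}}\otimes\ev_X\otimes\id_Y)
\]
and coevaluation $\coev_{X\otimes Y}=(\id_X\otimes\coev_Y\otimes\id_{X^{\ast}})\circ\coev_X$ (inserting the evident associators), the triangle identities for $X\otimes Y$ following formally from those of $X$ and $Y$ and the coherence axioms; the unit $\one=\Bool$ is self-dual with $\ev_{\one}=\coev_{\one}=\id_{\Bool}$. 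Applying this by induction on the number of tensor factors, any object $X=X_1\otimes\cdots\otimes X_k$ of $\Bmodo{P}$ with each $X_i\in\{P,P^{\ast}\}$ has a dual $X_k^{\ast}\otimes\cdots\otimes X_1^{\ast}$, which (using $P^{\ast\ast}\cong P$) again has all tensor factors in $\{P,P^{\ast}\}$ and so lies in $\Bmodo{P}$; moreover the evaluation and coevaluation morphisms just constructed are composites of $\ev_P,\coev_P,\ev_{P^{\ast}},\coev_{P^{\ast}}$, identities, associators and symmetries, hence are morphisms of $\Bmodo{P}$. Since $\Bmodo{P}$ is by construction a full monoidal subcategory of $\Bmod$ containing the unit and closed under $\otimes$, the braiding of $\Bmod$ restricts to it, and we conclude that $\Bmodo{P}$ is a rigid symmetric monoidal category.

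The substantive point is entirely contained in the first step --- that $P$, and therefore $P^{\ast}$, is genuinely dualizable --- and this has already been established in Proposition~\ref{prop_proj_coev}; it cannot be bypassed, since by Barr's result the ambient category $\Bfmod$ is \emph{not} rigid, so one cannot simply inherit a rigid structure from it. The remaining steps are formal, and the only thing requiring a little care is the bookkeeping: verifying that the dual object and all the cups and caps assembled for a multi-factor tensor product stay inside the subcategory $\Bmodo{P}$, which is immediate from the inductive construction together with the self-duality $P^{\ast\ast}\cong P$.
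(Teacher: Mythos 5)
Your proof is correct and follows essentially the same route as the paper, which simply observes that the rigid structure on $\Bmodo{P}$ is given by the evaluation and coevaluation maps of Proposition~\ref{prop_proj_coev} (for $P$ and $P^{\ast}$) together with their tensor products. You have merely spelled out the standard bookkeeping --- duals of tensor products, the triangle identities, and closure of the duality data inside the subcategory --- which the paper leaves implicit.
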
 
The rigid structure is given by the evaluation and coevaluation maps above and their tensor products. $\square$ 

\vspace{0.1in}

{\it Finite topological spaces and projective semimodules.}
It follows from Birkhoff's theorem that a finite projective $\Bool$-semimodule $P$ can be realized as the semilattice $\mc{O}(X)$ of open sets of a finite topological space $X$.  Assume that $X$ is minimal, in the sense that for any two points $x_1,x_2\in X$ there exists an open set that contains only one of these two points. For an open set $U$ denote by $[U] $ the corresponding element of $P$. Each $x\in X$ determines the smallest open set $U_x$ that contains $x$ (the intersection of all open sets that contain $x$). Sets $U_x$, $x\in X$,  are pairwise distinct and constitute the set of smallest non-empty open subsets of $X$. 

Elements $[U_x],x\in X$ are exactly the irreducible elements of the semilattice $\mc{O}(X)$, and $X$ can be reconstructed from $P$ canonically, by taking its set of points as the set of irreducible elements $\irr(P)$ of $P$. The smallest open set that contains $x\in \irr(P)$ consists of all $y\in \irr(P)$ such that $x+y=x$. 

The dual $\Bool$-semimodule  $P^{\ast}\cong\mc{O}(X)^{\ast}$ is isomorphic to the lattice of all closed sets in $X$, with the union of closed sets as the addition operation. (Any closed set $V\subset X$ defines a semilinear map $P\lra \Bool$, taking $[U]$ to $1$ if and only if $U\cap V\not= \emptyset$. Since $P$ and $P^{\ast}$ have the same cardinality, this describes all elements of $P^{\ast}$.) Equivalently, $ \mc{O}(X)^{\ast}$ is the lattice of open sets of the dual finite space $X^{\ast}$.  Let $V_x$ be the smallest closed set that contains $x$ and denote by $[V_x]$ the corresponding element of $P^{\ast}$. Note that $U_x\cap V_x=\{x\}$ since we chose $X$ minimal. Form the element 
\begin{equation}
    c_P \ : \ \sum_{x\in X} [U_x]\otimes [V_x] \in P \otimes P^{\ast} 
\end{equation}
and consider the corresponding homomorphism
\begin{equation}\label{coev_open_sets} 
    \coev_P \ : \ \Bool \lra P\otimes P^{\ast}.
\end{equation}
 It is straightforward to check that $\coev_P$ satisfies the isotopy relation in Figure~\ref{cupcap}. The map $\coev_P$ is canonically associated to $P$, since minimal $X$ is unique, up to homeomorphism, given $P$.

\vspace{0.1in} 

For two finite projective $\Bool$-semimodules $P_1,P_2$ with associated topological spaces $X_1,X_2$ the tensor product $P_1\otimes P_2$ has associated topological space $X_1\times X_2$. This implies compatibility of coevaluation maps 
$\coev_{P_1\otimes P_2}=\coev_{P_1}\otimes \coev_{P_2}$. Evaluation maps are naturally compatible with the tensor product. Furthermore, $
\coev_{P^{\ast}}= \sigma \circ \coev_P$, where $\sigma:P\otimes P^{\ast}\lra P^{\ast}\otimes P$ is the transposition map.  

\vspace{0.1in} 

{\it Rigid monoidal structure.}
Denote by $\Bfpmod$ the full monoidal subcategory of $\Bfmod$ with finite projective semimodules as objects.

\begin{prop}\label{prop_proj_rigid} The category $\Bfpmod$ is  a rigid symmetric monoidal category.  
\end{prop}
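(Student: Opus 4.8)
The plan is to exhibit $\Bfpmod$ as a full symmetric monoidal subcategory of $\Bfmod$ that is closed under passing to duals, and then to invoke Proposition~\ref{prop_proj_coev} for the isotopy relations. Since $\Bfpmod$ is full by definition, once I know it is closed under $\otimes$, contains $\one$, and is closed under $M \mapsto M^{\ast}$, rigidity reduces to checking the zigzag identities on generators.

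First I would check that $\Bfpmod$ is a symmetric monoidal subcategory. The unit object $\one = \Bool = \Bool^1$ is free, hence projective, so $\one \in \Ob(\Bfpmod)$. For closure under $\otimes$, let $P_1, P_2$ be finite projective; by Proposition~\ref{prop_frobenius} each $P_i$ is a retract of a finite free semimodule, say $P_i \stackrel{\iota_i}{\lra} \Bool^{n_i} \stackrel{p_i}{\lra} P_i$ with $p_i \iota_i = \id_{P_i}$. Bifunctoriality of $\otimes$ on $\Bmod$ then gives $(p_1 \otimes p_2)(\iota_1 \otimes \iota_2) = \id_{P_1 \otimes P_2}$, so $P_1 \otimes P_2$ is a retract of $\Bool^{n_1} \otimes \Bool^{n_2}$; and $\Bool^{n_1} \otimes \Bool^{n_2} \cong \Bool^{n_1 n_2}$ (for instance from the adjunction~\eqref{eq_adjunction}), a finite free semimodule. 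Hence $P_1 \otimes P_2$ is finite projective, again by Proposition~\ref{prop_frobenius}. The associativity, unit, and symmetry constraints are inherited from $\Bfmod$, which is already symmetric monoidal.

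Second I would check closure under duality. For $P$ finite projective, $P^{\ast}$ is finite since $|P^{\ast}| = |P|$, and dualizing the retraction above yields $P^{\ast} \stackrel{p^{\ast}}{\lra} (\Bool^n)^{\ast} \stackrel{\iota^{\ast}}{\lra} P^{\ast}$ with $\iota^{\ast} p^{\ast} = (p\iota)^{\ast} = \id_{P^{\ast}}$; since a finite free semimodule is self-dual, $(\Bool^n)^{\ast} \cong \Bool^n$, so $P^{\ast}$ is a retract of a finite free semimodule and hence an object of $\Bfpmod$. This is exactly the statement that duality exchanges projectives and injectives, which coincide in $\Bfmod$ by Proposition~\ref{prop_frobenius}.

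Finally, for each $P \in \Ob(\Bfpmod)$ I would take $\ev_P \colon P^{\ast} \otimes P \lra \Bool$ the canonical evaluation and $\coev_P \colon \Bool \lra P \otimes P^{\ast}$ the map~\eqref{eq_coev_map}; Proposition~\ref{prop_proj_coev} gives the zigzag relations of Figure~\ref{cupcap}, and the opposite cap and cup $\ev'_P, \coev'_P$ are obtained from them via the symmetry. A general object $P_1 \otimes \cdots \otimes P_k$ has dual $P_k^{\ast} \otimes \cdots \otimes P_1^{\ast}$, again in $\Bfpmod$, with evaluation and coevaluation assembled from the $\ev_{P_i}, \coev_{P_i}$ and symmetry morphisms in the usual way (using the compatibility $\coev_{P_1 \otimes P_2} = \coev_{P_1} \otimes \coev_{P_2}$ up to symmetry, noted earlier in the excerpt); the zigzag identities for these composites follow formally from those for the factors. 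Thus every object of $\Bfpmod$ admits a two-sided dual inside $\Bfpmod$, so $\Bfpmod$ is a rigid symmetric monoidal category. I expect the one step requiring genuine care to be the first — that the tensor product of two projective semimodules is projective; everything there hinges on bifunctoriality of $\otimes$ together with freeness of $\Bool^m \otimes \Bool^n$, and the remaining steps are either routine or already recorded in the excerpt.
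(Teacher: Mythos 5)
Your proof is correct and takes essentially the same route as the paper: rigidity comes from the evaluation and coevaluation maps of Proposition~\ref{prop_proj_coev} together with their compatibility with the tensor product, which is exactly what the paper's one-line proof invokes. The only minor difference is that you verify closure of $\Bfpmod$ under $\otimes$ via retracts of free semimodules and $\Bool^{n_1}\otimes\Bool^{n_2}\cong\Bool^{n_1n_2}$, whereas the paper obtains this (and the identity $\coev_{P_1\otimes P_2}=\coev_{P_1}\otimes\coev_{P_2}$) from the identification of $P_1\otimes P_2$ with the lattice of open sets of the product $X_1\times X_2$ of the associated finite topological spaces.
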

This follows from compatibility of evaluation and coevaluation maps with the tensor product of projective semimodules.
$\square$

\vspace{0.05in} 

 Note that a rigid symmetric monoidal category is the same as a compact closed category. 

\vspace{0.05in} 

This category can be interpreted diagrammatically (see Section~\ref{subsec_pairing}).  A closed circle labelled by  any nonzero $P$ evaluates to $1$, since $\ev_{P^{\ast}}\circ \coev_P(1)=1$ (equivalently, $\dim(P)=1\in \Bool$, for any $P\not= 0$). 

\begin{remark}
  Here is a similar example in the linear world. Let $R$ be a commutative ring which is an integral domain. 
  The category $R\pfmod$ of finitely-generated projective $R$-modules is a rigid symmetric category, with the usual tensor product $P\otimes_A Q$ of modules and the dual module $P^{\ast}=\Hom_A(P,A)$. 
  The rigid structure is lost if one passes to the larger category of all finitely-generated $R$-modules. 
  Instead, one can work with the homotopy category of bounded complexes of objects in $R\pfmod$ to keep the duality and the rigid structure. 
\end{remark}

Tensor product and internal homs turn category $\Bmod$ into a cartesian closed category. As explained earlier, restricting to the full subcategory of finite projective semimodules $\Bfpmod$ gives a rigid symmetric monoidal category, while such structure is absent on the larger category of all finite $\Bool$-semimodules. 

\vspace{0.05in} 

We refer the reader to~\cite[Sections 2,3]{CC} and \cite{HMS} for more information on  $\Bool$-semimodules and to~\cite{Ki} for the theory of Boolean matrices.

{\it Reduced tensor product.} 
 Given an injective morphism $N_0\stackrel{f}{\lra} N_1$, tensoring it with $M$ results in a morphism 
 \begin{equation*} 
 M\otimes N_0\stackrel{\id\otimes f}{\lra}M\otimes N_1
 \end{equation*} 
 which is not injective, in general~\cite{GW99}. 
 A $\Bool$-semimodule $M$ is called \emph{flat} if the above morphism $\id\otimes f$ is injective for any injective $f$.

 Consider finite lattices $M_3$ and $N_5$ depicted in Figure~\ref{figure03-02} and underlying $\wedgezero$-semilattices, which we also denote by $M_3$ and $N_3$. (They are given by forgetting part of the structure, the $\mathrm{inf}$ operation, in a lattice.) 

 \input{figure03-02}
 
 Let $\iota: M_3\hooklra \Bool^3$ and $\iota': N_5 \hooklra \Bool^3$ be embeddings of these semilattices into the free rank three semilattice given by 
 
% \vspace{0.1in} 
 \[
 \bordermatrix{
 & x_1 & x_2 & x_3 \cr
 & 1 & 1 & 0 \cr
 & 1 & 0 & 1 \cr
 & 0 & 1 & 1 \cr 
 }
 \hspace{1cm}
 \mbox{ and } 
 \hspace{1cm} 
 \bordermatrix{
 & a & b & c \cr
 & 1 & 0 & 1 \cr 
 & 0 & 1 & 0 \cr 
 & 1 & 1 & 0 \cr 
 },
 \] 
respectively.  
 
% \vspace{0.1in} 
 
 Gr\"atzer and Wehburg~\cite{GW99} show the following.  
 \begin{itemize}
     \item 
 If the map $\iota\otimes \id_M: M_3\otimes M \lra \Bool^3\otimes M$ is injective, lattice $ M$ does not contain $M_3$ as a sublattice.
 \item 
 If the map $\iota'\otimes \id_M: N_5\otimes M \lra \Bool^3\otimes M$ is injective, lattice $ M$ does not contain $N_5$ as a sublattice.
  \end{itemize}
  A lattice is distributive if and only if it does not contain any sublattices isomorphic to $M_3$ or $N_5$. This leads to the following result.  
  
  \begin{theorem}[Gr\"atzer-Wehburg~\cite{GW99}] 
  Let $M$ be a $\wedgezero$-semilattice. The following are equivalent: 
  \begin{enumerate}
      \item $M$ is flat. 
      \item Both homomorphisms $ \iota\otimes \id_M$ and $ \iota\otimes \id_M$ are injective. 
      \item $M$ is a distributive semilattice. 
  \end{enumerate}
  \end{theorem}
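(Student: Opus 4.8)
The plan is to prove the cycle of implications $(1)\Rightarrow(2)\Rightarrow(3)\Rightarrow(1)$, leaning on the two bulleted Gr\"atzer--Wehburg lemmas quoted above for the middle implication and on Proposition~\ref{prop_many_char} for the last one. (In condition $(2)$ the two homomorphisms intended are $\iota\otimes\id_M$ and $\iota'\otimes\id_M$, for the embeddings $\iota\colon M_3\hookrightarrow\Bool^3$ and $\iota'\colon N_5\hookrightarrow\Bool^3$ displayed before Figure~\ref{figure03-02}.) The implication $(1)\Rightarrow(2)$ is immediate from the definition of flatness: $\iota$ and $\iota'$ are injective morphisms of $\wedgezero$-semilattices, so if $M$ is flat then $\iota\otimes\id_M$ and $\iota'\otimes\id_M$ are injective.

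For $(2)\Rightarrow(3)$ I would apply the two Gr\"atzer--Wehburg lemmas in the form stated: injectivity of $\iota\otimes\id_M$ guarantees that $M$ contains no sublattice isomorphic to $M_3$, and injectivity of $\iota'\otimes\id_M$ that $M$ contains no sublattice isomorphic to $N_5$. By the classical criterion recalled above --- a lattice is distributive exactly when it contains neither $M_3$ nor $N_5$ as a sublattice --- together with its counterpart for $\wedgezero$-semilattices (for a finite semilattice this is Lemma~184 of~\cite{Grat} via Remark~\ref{remark_distributive}, and the general case reduces to the finite one), it follows that $M$ is a distributive semilattice.

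The substantive implication is $(3)\Rightarrow(1)$. When $M$ is finite I would argue as follows: by Remark~\ref{remark_distributive} the associated lattice $M^{\wedge}$ is distributive, so by Proposition~\ref{prop_many_char} the semimodule $M$ is projective, i.e.\ a retract of a finite free $\Bool$-semimodule, witnessed by maps $\iota_0\colon M\to\Bool^n$ and $p\colon\Bool^n\to M$ with $p\circ\iota_0=\id_M$. Free semimodules are flat, since $\Bool^n\otimes(-)$ is naturally isomorphic to the $n$-fold product functor, which preserves injections; and flatness passes to retracts by a short diagram chase (given $x,y\in M\otimes N_0$ with $(\id_M\otimes f)(x)=(\id_M\otimes f)(y)$, push forward along $\iota_0\otimes\id_{N_0}$, apply injectivity of $\id_{\Bool^n}\otimes f$, and pull back along $p\otimes\id_{N_0}$ to conclude $x=y$). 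Hence finite distributive $M$ is flat. For a general distributive $M$ I would reduce to the finite case by writing $M$ as a directed colimit of finite distributive semilattices; since $\otimes_{\Bool}$ commutes with directed colimits and a directed colimit of injective semimodule maps is injective, $\id_M\otimes f$ is then a directed colimit of injections, hence injective.

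The step I expect to be the main obstacle is the handling of distributivity for honest $\wedgezero$-semilattices, which enters both in $(2)\Rightarrow(3)$ and in the reduction to finite $M$ in $(3)\Rightarrow(1)$. The difficulty is that the meet operation on a $\vee$-semilattice is only a derived, partially defined operation, and it is not preserved under passing to sub-$\vee$-semilattices (a sub-$\vee$-semilattice of a distributive semilattice need not be distributive), so neither the semilattice version of the $M_3$/$N_5$ criterion nor the presentation of a distributive semilattice as a directed colimit of \emph{finite distributive} subsemilattices is formal; both rest on the structure theory of distributive semilattices in~\cite{GW99,Grat}. The remaining ingredients --- flatness of free $\Bool$-semimodules, stability of flatness under retracts and directed colimits, and $(1)\Rightarrow(2)$ --- are routine.
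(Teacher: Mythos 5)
First, a point of comparison: the paper does not prove this theorem at all; it is quoted from Gr\"atzer--Wehrung \cite{GW99}, and the two bulleted statements preceding it are likewise citations, so your reconstruction can only be measured against that source, not against an in-paper argument. Your reading of item (2) (the second map is meant to be $\iota'\otimes\id_M$) is the intended one, $(1)\Rightarrow(2)$ is indeed immediate, and your route for $(3)\Rightarrow(1)$ is sound: for finite $M$ distributivity gives projectivity (Proposition~\ref{prop_many_char}), the isomorphism $\Bool^n\otimes(-)\cong(-)^n$ shows free semimodules are flat, flatness passes to retracts by your diagram chase, and the passage to arbitrary $M$ works as you describe once one knows that every distributive $\wedgezero$-semilattice is the directed union of its finite distributive $\wedgezero$-subsemilattices (Pudl\'ak's lemma) --- a genuine imported theorem that you are right to flag.

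The genuine gap is in $(2)\Rightarrow(3)$, and it is not quite where you place it. The direction of the exclusion criterion you actually need --- a non-distributive $\wedgezero$-semilattice contains a $\vee$-closed subset order-isomorphic to $M_3$ or $N_5$ --- is in fact easy: a failure $a\le b_0\vee b_1$ admitting no decomposition persists in the finite $\wedgezero$-subsemilattice generated by $\{0,a,b_0,b_1\}$, and Birkhoff's $M_3$/$N_5$ criterion applied to the associated finite lattice (Remark~\ref{remark_distributive}) produces such a copy. The problem is that with this join-subsemilattice reading of ``contains $M_3$ as a sublattice'' the quoted lemmas are false, so they cannot carry your argument: $\Bool^3$ is free, hence flat, so $\iota\otimes\id_{\Bool^3}$ is injective, yet $\Bool^3$ contains the $\vee$-closed copy $\iota(M_3)=\{(0,0,0),(1,1,0),(1,0,1),(0,1,1),(1,1,1)\}$ of $M_3$ --- exactly the phenomenon you yourself note when observing that sub-$\vee$-semilattices of distributive semilattices need not be distributive. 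If instead ``sublattice'' is read in the meet-and-join-closed sense (as the phrase ``lattice $M$'' in the bullets suggests), the lemmas become plausible, but then ``no such copy $\Rightarrow M$ distributive'' is unjustified for a semilattice that need not have meets, and the finite reduction only yields copies whose meets are computed inside a finite subsemilattice, not in $M$. So neither reading closes the chain. In \cite{GW99} the implication corresponding to $(2)\Rightarrow(3)$ is proved directly: from a non-distributivity witness one constructs two distinct elements of $M_3\otimes M$ (or of $N_5\otimes M$) that become identified in $\Bool^3\otimes M$; that construction, not a sublattice-exclusion criterion, is the substantive content missing from your middle implication.
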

  
Recall the notion of distributive semilattice from Remark~\ref{remark_distributive}.
For a finite $\wedgezero$-semilattice $M$ distributivity means that $M^{\wedge}$ is a distributive lattice, or, equivalently, that $M$ is a projective semilattice (projective $\Bool$-semimodule), see Proposition~\ref{prop_many_char}. 
 
%\begin{corollary}  
%  Inclusions  of finite $\Bool$-semimodules into free semimodules induce an inclusion $M\otimes N\subset \Bool^{nm}$ if at least one of $M$ and $N$ is projective. 
%\end{corollary}
Inclusions of finite semimodules into free semimodules 
\[ 
M\hooklra \Bool^{S_1}, \hspace{0.5in}  N\hooklra \Bool^{S_2}
\]
 induce a map 
\begin{equation}\label{eq-phi-inj}
    M\otimes N \stackrel{\phi}{\lra} \Bool^{S_1\times S_2}.
\end{equation}
We see that $\phi$ is not always injective.  Gr\"atzer-Wehburg~\cite{GW99} implies the following result. 
\begin{prop}\label{prop_inject}
 Map $\phi$ above is injective if $M$ or $N$ is a projective semimodule. 
\end{prop}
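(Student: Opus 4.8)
The plan is to reduce the statement to the single non-formal ingredient recalled just above---that a finite projective $\Bool$-semimodule is flat---and then to display $\phi$ as a composite of two visibly injective maps.

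First I would use the symmetry of $\otimes$ on $\Bmod$ to assume without loss of generality that $M$ (rather than $N$) is projective. Next I would pin down $\phi$: the inclusions $M\hooklra\Bool^{S_1}$ and $N\hooklra\Bool^{S_2}$ induce $\iota_M\otimes\iota_N\colon M\otimes N\to\Bool^{S_1}\otimes\Bool^{S_2}$, and $\phi$ is nothing but this map followed by the canonical isomorphism $\Bool^{S_1}\otimes\Bool^{S_2}\cong\Bool^{S_1\times S_2}$ (the tensor product of free semimodules on $S_1$ and $S_2$ is free on $S_1\times S_2$). So it suffices to show $\iota_M\otimes\iota_N$ is injective, and I would factor it as
\[
M\otimes N \xrightarrow{\ \id_M\otimes\iota_N\ } M\otimes\Bool^{S_2} \xrightarrow{\ \iota_M\otimes\id_{\Bool^{S_2}}\ } \Bool^{S_1}\otimes\Bool^{S_2}.
\]
For the first arrow: $M$ projective implies $M$ is a distributive $\wedgezero$-semilattice (Proposition~\ref{prop_many_char} together with Remark~\ref{remark_distributive}), hence flat by the Gr\"atzer--Wehburg theorem quoted above, so applying flatness of $M$ to the injection $\iota_N$ makes $\id_M\otimes\iota_N$ injective. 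For the second arrow: $\Bool^{S_2}$ is free, hence flat (its associated lattice is a Boolean lattice, which is distributive), so tensoring the injection $\iota_M$ with it stays injective; equivalently, $M\otimes\Bool^{S_2}\cong\bigoplus_{s\in S_2}M$ and the map is just $\iota_M$ in each summand. A composite of injective semimodule maps is injective, so $\phi$ is injective.

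As for difficulty, there is no genuinely hard step here---the Gr\"atzer--Wehburg equivalence ``projective $\Leftrightarrow$ distributive $\Leftrightarrow$ flat'' carries all the weight. The only points needing care are (a) confirming that $\phi$ coincides with $\iota_M\otimes\iota_N$ up to the canonical identification $\Bool^{S_1}\otimes\Bool^{S_2}\cong\Bool^{S_1\times S_2}$, and (b) invoking flatness in the correct slot: flatness of $M$ handles an injection tensored on the $N$-side, and freeness of $\Bool^{S_2}$ handles an injection tensored on the $M$-side. If one prefers to avoid citing Gr\"atzer--Wehburg, I would instead use the retract presentation $M\xrightarrow{j}\Bool^{m}\xrightarrow{q}M$ with $q\circ j=\id_M$: then $j\otimes\id_N$ is a split monomorphism and $\Bool^m\otimes(-)$ preserves injections, and the commuting square relating $\id_M\otimes\iota_N$ to $\id_{\Bool^m}\otimes\iota_N$ through the maps $j\otimes\id$ forces $\id_M\otimes\iota_N$ to be injective, after which the same factorization finishes the argument.
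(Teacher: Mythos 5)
Your argument is correct and follows essentially the route the paper intends: the paper offers no written proof beyond the remark that Gr\"atzer--Wehrung implies the statement, and your factorization of $\phi$ as $(\iota_M\otimes\id_{\Bool^{S_2}})\circ(\id_M\otimes\iota_N)$, using projective $\Rightarrow$ distributive $\Rightarrow$ flat for the first arrow and flatness (or the coordinatewise description) of the free semimodule for the second, is exactly the elaboration of that citation. Your fallback via the retract $M\to\Bool^m\to M$ is also valid and has the minor merit of being self-contained, but it is not needed.
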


For finite $\Bool$-semimodules $M,N$, choose inclusion $M\hooklra P_1, N\hooklra P_2$ into finite projective semimodules (these are the same as finite injective semimodules). Consider the induced map 
\begin{equation}\label{eq-phi-inj-02}
    M\otimes N \stackrel{\phi}{\lra} P_1\otimes P_2
\end{equation}
and define the \emph{reduced tensor product} $M\undotimes N$ as the image of $\phi$. Coincidence of finite projective and injective semimodules implies that the reduced tensor product is well-defined. It is also isomorphic to the image of $M\otimes N$ in $M\otimes P_2$ for any inclusion $N\subset P_2$ into a finite projective semimodule, and likewise isomorphic to the image of $M\otimes N$ in $P_1\otimes N$ for inclusions $M \subset P_1$. 

Reduced tensor product comes with a canonical surjective map
\begin{equation} \label{eq_tensor_two} M\otimes N \lra M\undotimes N.
\end{equation} 
This map is an isomorphism if either $M$ or $N$ is a projective semimodule. 

\vspace{0.1in} 

\begin{remark} {\it Semilattices and urban planning.} Remarkably, semilattices made a splash in architecture and urban planning with the 1965 paper ``A city is not a tree'' by C.~Alexander, who argued that part of a structure of a city is that of a semilattice. This widely cited and influential paper, together with follow-up work, was republished as a book fifty years later~\cite{Al}.  

\end{remark} 

%%%%%%%%%%%%%%%%%%%%%
% Categories from evaluation 
%%%%%%%%%%%%%%%%%%%%%

\subsection{Categories from an  evaluation} 
Let $\Sigma$ be a finite set, $L_I\subset \Sigma^{\ast}$ be a language, and $L_{\circ}\subset \Sigma^{\ast}_{\circ}$ be a circular language. Write this pair of languages as $L=(L_I,L_{\circ})$. Pairs $L$ are in a bijections with evaluations $\alpha$ of closed $0$-manifolds with values in $\Bool$, as explained at the end of Section~\ref{subsec_valued}.
Denote the evaluation associated to $L$ by $\alpha_L$ and the pair associated to an evaluation $\alpha$ by $L_{\alpha}$. 

\vspace{0.1in} 

To $L$ and $\alpha$ we assign several categories, similar to~\cite{KS3,KQR,KKO,Kh3}. 

(1)  Start with the category $\CCC_{\Sigma}$ of $\Sigma$-decorated one-dimensional cobordisms, with inner (floating) boundary allowed. Objects of this category are finite sequences $\underline{\varepsilon}$ of pluses and minuses (oriented $0$-manifolds). Morphisms are such decorated cobordisms, viewed up to decoration-preserving rel boundary diffeomorphisms, see an example in Figure~\ref{generalized-ex-01}. 

\vspace{0.05in} 

(2) Consider the category $\CCC'_{\alpha}$ with the same objects as in $\CCC_{\Sigma}$, but now $\Bool$-semilinear combinations of morphisms in $\CCC_{\Sigma}$ are allowed and any floating component $U$ of a cobordism is evaluated to $\alpha(U)\in \Bool$. Each diagram in $\CCC'_{\alpha}$ reduces to a diagram with each component having at least one top or bottom boundary point (and at most one or none floating boundary points), and a morphism in $\CCC'_{\alpha}$ is a $\Bool$-semilinear combination of such diagrams. 

\vspace{0.05in} 

(3) Next we pass to the quotient of $\CCC'_{\alpha}$ by the universal construction, as described earlier in Section~\ref{subsec_rigid}. Denote the resulting category by $\CCC_{\alpha}$. Following terminology in~\cite{KQR,KKO,Kh2}, we may call  it the gligible quotient category. 

\vspace{0.05in} 

 In this paper $\CCC_{\alpha}$ is our primary category. By analogy with~\cite{KS3,KKO} and other papers, one may want to pass to its Karoubi envelope or allow finite direct sums of objects, etc. It is not clear to us what the natural completion of $\CCC_{\alpha}$ would be, since Karoubi envelope or additive closure are especially useful for additive categories, while $\CCC_{\alpha}$ is only semiadditive.

Categories  $\CCC_{\Sigma},\CCC_{\alpha}'$ and $\CCC_{\alpha}$ are rigid symmetric monoidal categories. Categories $\CCC_{\alpha}'$ and $\CCC_{\alpha}$ are semiadditive, in the sense that hom spaces in these categories are $\Bool$-semimodules and composition of homs is (semi)bilinear. 

\vspace{0.1in} 

We say that  a pair $L$ of languages is $\CCC$-\emph{regular} if hom spaces in the category $\CCC_\alpha$ are finite (equivalently, hom spaces are finitely-generated $\Bool$-semimodules). 

\begin{prop} \label{prop_when_regular} 
$L=(L_I,L_{\circ})$ is $\CCC$-regular if and only if languages $L_I$ and $L_{\circ}$ are regular. 
\end{prop}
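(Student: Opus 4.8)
The plan is to analyze the hom spaces in $\CCC_\alpha$ and relate their finiteness to the finiteness of the sets $X_{L_I}$, ${}_{L_I}X$ (equivalently $E_{L_I}$) and the syntactic monoid $E_{\widetilde{L_\circ}}$, using the classical characterizations of regular languages recalled in Section~\ref{sec_regular} (Kleene's theorem, finiteness of $X_L$, and Proposition~\ref{prop_c_reg}). First I would observe that, by rigidity of $\CCC_\alpha$ (and the adjunction isomorphisms $\Hom_{\CCC_\alpha}(\varepsilon,\varepsilon')\cong\Hom_{\CCC_\alpha}(\emptyset,\varepsilon^{\ast}\varepsilon')=A_\alpha(\varepsilon^{\ast}\varepsilon')$), it suffices to prove that every state space $A_\alpha(\varepsilon)$ is a finite $\Bool$-semimodule. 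Next, a generating set for $A_\alpha(\varepsilon)$ is given by diagrams with no floating circles (those get evaluated by $\alpha_\circ$ to $0$ or $1$) and at most finitely many floating intervals; modulo the gligible relation, a diagram decomposes into: (i) arcs connecting two boundary points of $\varepsilon$, each decorated by a word read along the arc, (ii) arcs with one endpoint on the boundary and one floating endpoint, decorated by a word, and (iii) closed floating components, which contribute only a scalar in $\Bool$. So the ``size'' of $A_\alpha(\varepsilon)$ is controlled by how many inequivalent decorations of each arc type survive the gligible quotient.

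The key step is then the following reduction lemma, which I would prove by the standard cut-and-paste argument of the universal construction: two such diagrams are equivalent in $\CCC_\alpha$ if and only if pairing them against all diagrams in $A_\alpha(\varepsilon^{\ast})$ gives equal evaluations, and every such pairing, when fully contracted, produces a disjoint union of decorated intervals and decorated circles which $\alpha=(\alpha_I,\alpha_\circ)$ evaluates by checking membership of the resulting words in $L_I$ and the resulting circular words in $L_\circ$. Concretely, for an arc decorated by word $\omega$ of type (i), closing it up with a test arc decorated by $\omega'$ produces either an interval $I(\omega'\omega)$ or a circle $\SS^1(\omega'\omega\cdots)$, so two decorations $\omega,\omega''$ on that arc are gligibly equivalent exactly when $\omega$ and $\omega''$ are indistinguishable under the relevant ``two-sided'' equivalence determined by $L_I$ and $L_\circ$. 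This two-sided equivalence refines the syntactic equivalence $\sim_{L_I}$ of $L_I$ and the rotation-aware equivalence governing $\widetilde{L_\circ}$. Hence: if $L_I$ and $L_\circ$ are regular, then by the propositions of Section~\ref{sec_regular} (finiteness of $E_{L_I}$, $X_{L_I}$, ${}_{L_I}X$, and finiteness of $E_{\widetilde{L_\circ}}$ from Proposition~\ref{prop_c_reg}), the number of inequivalent decorations of each arc type is finite, so $A_\alpha(\varepsilon)$ is finite for every $\varepsilon$, giving one direction. Conversely, if some $A_\alpha(\varepsilon)$ is finite — it is enough to look at $A_\alpha(+-)$, which as noted in the Summary ``keeps track of both languages'' — then projecting onto the decorations of a single floating arc shows $X_{L_I}$ (hence $L_I$) is regular, and projecting onto a floating circle's worth of data shows $\widetilde{L_\circ}$ has finite syntactic monoid, hence $L_\circ$ is c-regular, i.e.\ regular.

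\textbf{Main obstacle.} The delicate part is making the cut-and-paste equivalence precise in the $\Bool$-semiring setting and in the presence of \emph{both} boundary-to-boundary arcs and floating components simultaneously: one must show that the gligible equivalence on decorated arcs factors through exactly the syntactic-type data of $L_I$ (for interval contributions) and $\widetilde{L_\circ}$ (for circle contributions), with no further collapsing and no extra subtlety from the interaction between the two. In particular one has to verify that testing against \emph{all} $g$ (not just a convenient family) does not identify more diagrams than the finitely many syntactic classes predict, which is where the faithfulness of the $E_L$-action on $X_L$ and ${}_LX$, and the analogous statement for circular words, is used. Handling the bookkeeping of which arcs close up into intervals versus circles under an arbitrary test diagram $g\in A_\alpha(\varepsilon^{\ast})$ — i.e.\ a careful combinatorial analysis of the closed $1$-manifold $M_1\cup_\varepsilon M_2$ — is the real content; once that is in place, the two implications follow from the cited results on regular languages.
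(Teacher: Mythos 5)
Your proposal is correct and follows essentially the same route as the paper: the paper's (very terse) proof likewise reduces $\CCC$-regularity to finiteness of the state spaces $A(+),A(-),A(+-)$ and identifies their finiteness with regularity of $L_I$ and $L_{\circ}$, which is exactly what your syntactic-congruence analysis of decorated arcs and their closures makes precise. The "main obstacle" you flag is milder than you fear: for the finiteness direction it suffices that syntactically congruent decorations become equal (extra collapsing only helps), and for the converse direction distinct Myhill--Nerode-type classes are separated by an explicit closing test, so "no further collapsing" is never actually needed.
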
 

\begin{proof} It's easy to see that $L$ is $\CCC$-regular if and only if spaces $A(+),A(-)$ and $A(+-)$ are finitely-generated (equivalently, finite) $\Bool$-semimodules, for this implies finiteness of  $A(\varepsilon)$ for any sign sequence $\varepsilon$. $\Bool$-semimodules $A(+),A(-)$ are finite if and only if the interval language $L_I$ is regular. $\Bool$-semimodule $A(+-)$ is finite if and only if both languages $L_I$ and $L_{\circ}$ are regular.
\end{proof}

\vspace{0.1in} 

%%%%%%%%%%%%%%%%%%%%%%
%
% State spaces, finite automata, ... 
%
%%%%%%%%%%%%%%%%%%%%%%

\section{State spaces,  automata, and generalizations}
\label{sec_sspaces} 

%%%%%%%%%%%%%%%%
% A plus A minus 
%%%%%%%%%%%%%%%%

\subsection{State spaces \texorpdfstring{$A(+)$}{A(+)}, \texorpdfstring{$A(-)$}{A(-)} and finite state automata} \label{subsec_plus_minus} 

Given a regular $\alpha$,  consider state spaces  $A(+)=A_{\alpha}(+)$ and $A(-)=A_{\alpha}(-)$. These are dual finite $\Bool$-semimodules, with the nondegenerate pairing 
\[  (\hspace{2mm},\hspace{2mm}) \ : \ A(-) \times A(+) \lra \Bool 
\]
given on the spanning sets by 
\[ (\langle \omega |, |\omega'\langle ) = \alpha_{I}(\omega\omega'), \hspace{4mm} \omega,\omega'\in \Sigma^{\ast},
\]
see Figure~\ref{state-space-01}. 

\input{state-space-01}

\vspace{0.1in} 
These spaces depend only on the interval language $L_I$ of $\alpha$ and not on the circle language $L_{\circ}$. 
$\Bool$-semialgebra $\Bool[\Sigma^{\ast}]$ acts on $A(-)$ on the right, via 
\begin{equation}\label{eq_Aplus_act} 
\langle \omega | a = \langle \omega a|, \ \  a\in \Sigma,
\end{equation} 
and on $A(+)$ on the left, via $
a | \omega \langle  = |a \omega \langle$. We denote the action of $a$ by 
\[m_a:A(-)\lra A(-).
\] 
The (semi)bilinear form intertwines the two actions, 
\[ (u x, v)  = (u, xv)=\alpha_I(uxv), \ \  u\in A(-), \ v\in A(+), \  x \in \Bool[\Sigma^{\ast}].
\] 

\vspace{0.1in} 

Let 
\begin{equation}
Q^- \ := \ \{\langle \omega | , \omega \in \Sigma^{\ast}\}\subset A(-)
\end{equation} 
be the subset of $A(-)$ consisting of images of all words rather then their $\Bool$-semilinear combinations. We say that an element $q\in Q^-$ is \emph{acceptable} if $\alpha_I(q)=1$. Denote by $Q_{\t}^-\subset Q^-$ the set of acceptable states, and denote by $q_{\init} = \langle \emptyset |$ the distinguished element for the empty word. Set $Q^-$ comes with a natural right action \eqref{eq_Aplus_act} of the free monoid $\Sigma^{\ast}$ that we denote $\delta: Q^-\times \Sigma\lra Q^-$. The following proposition is immediate. 

\begin{prop}\label{prop_min_dfa}  $(Q^-,\delta, q_{\init},Q^-_{\t})$ is the unique minimal deterministic automaton for the interval language $L_I$ of $\alpha$.
\end{prop}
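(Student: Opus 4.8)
The plan is to identify the structure $(Q^-,\delta,q_{\init},Q^-_{\t})$ with the canonical minimal automaton $F(X_{L_I})$ built in Section~\ref{subset_fsa} from the set $X_{L_I}$ of equivalence classes of $\Sigma^{\ast}$ under $\sim_{\ell L_I}$, which that section already identifies as the unique minimal DFA accepting $L_I$. So the core task is to exhibit a bijection $Q^- \cong X_{L_I}$ compatible with the initial states, the $\Sigma$-actions, and the accepting subsets. First I would recall that, by the definition of the state space $A(-)$ via the universal construction, two words $\omega,\omega'$ have $\langle\omega| = \langle\omega'|$ in $A(-)$ if and only if they pair equally with every element of $A(+)$; since the spanning set of $A(+)$ is $\{|\omega''\langle : \omega''\in\Sigma^{\ast}\}$ and $(\langle\omega|, |\omega''\langle) = \alpha_I(\omega\omega'')$, this says precisely that $\omega\sim_{\ell L_I}\omega'$ in the sense of Section~\ref{subset_fsa} (recalling $\alpha_I = \alpha_{L_I}$ is the indicator of $L_I$, so $\alpha_I(\omega\omega'')=1 \iff \omega\omega''\in L_I$). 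Hence $\omega\mapsto\langle\omega|$ descends to a bijection $X_{L_I}\xrightarrow{\ \sim\ }Q^-$.

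Next I would check that this bijection intertwines the relevant structure. The initial state: $q_{\init}=\langle\emptyset|$ corresponds to the class of the empty word, which is the initial state of $F(X_{L_I})$. The $\Sigma$-action: by \eqref{eq_Aplus_act}, $\delta(\langle\omega|,a)=\langle\omega a|$, which matches right concatenation by $a$ on $X_{L_I}$; one must note this is well-defined on $Q^-$ exactly because it is well-defined on $X_{L_I}$ (if $\omega\sim_{\ell L_I}\omega'$ then $\omega a\sim_{\ell L_I}\omega' a$), which is immediate from the definition of $\sim_{\ell L_I}$. The accepting set: $\langle\omega|\in Q^-_{\t}$ iff $\alpha_I(\omega)=1$ iff $\omega\in L_I$, i.e. iff (taking $\omega''=\emptyset$ in the defining relation) the class of $\omega$ in $X_{L_I}$ is an accepting state of $F(X_{L_I})$; and this condition only depends on the $\sim_{\ell L_I}$-class, so $Q^-_{\t}$ is well-defined. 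Therefore the bijection is an isomorphism of automata $(Q^-,\delta,q_{\init},Q^-_{\t})\cong F(X_{L_I})$.

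Finally, every state $\langle\omega|$ of $Q^-$ is reachable from $q_{\init}=\langle\emptyset|$ by reading $\omega$, so $Q^-$ has no unreachable states; combined with the identification above and the uniqueness statement for $F(X_{L_I})$ recalled in Section~\ref{subset_fsa} (the automaton $F(X_L)$ is, up to isomorphism, the unique deterministic automaton with the smallest number of states accepting $L$), this yields that $(Q^-,\delta,q_{\init},Q^-_{\t})$ is the minimal DFA for $L_I$. I expect the only real subtlety to be the very first step — carefully unwinding the universal-construction definition of equality in $A(-)$ to recognize it as the Nerode-type relation $\sim_{\ell L_I}$ — since everything afterward is a routine compatibility check; in particular one should be slightly careful that $A(+)$ is spanned (as a $\Bool$-semimodule) by the $|\omega''\langle$, so that testing against all of $A(+)$ is the same as testing against these spanning elements, which is where semilinearity of the pairing is used.
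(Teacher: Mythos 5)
Your proof is correct. The one step that needed care — unwinding equality in $A(-)$ under the universal construction and recognizing that $\langle \omega| = \langle \omega'|$ if and only if $\alpha_I(\omega\omega'')=\alpha_I(\omega'\omega'')$ for all $\omega''$, i.e.\ $\omega\sim_{\ell L_I}\omega'$ — is handled properly: closures of a morphism $\emptyset\to -$ are tested against diagrams with boundary $+$, and by semilinearity it suffices to test against the spanning elements $|\omega''\langle$, so the quotient relation on $Q^-$ is exactly the Nerode relation. Where you differ from the paper is in how minimality and uniqueness are then extracted. You identify $(Q^-,\delta,q_{\init},Q^-_{\t})$ with the canonical automaton $F(X_{L_I})$ and invoke the classical fact, recalled (without proof) in Section~\ref{subset_fsa}, that $F(X_L)$ is the unique minimal DFA. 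The paper instead reproves the relevant universality from scratch: given any \emph{trim} DFA $D$ accepting $L_I$, it builds a map $u:Q\to A(-)$ by sending a state $q$ to the functional $\omega\mapsto \alpha_I(\omega(q)\,\omega)$ on $A(+)$, checks that $u$ lands in $Q^-$ and that the resulting $p:Q\to Q^-$ is a surjection of automata (preserving initial state, $\Sigma$-action, and accepting states), and concludes minimality and uniqueness of $Q^-$ directly. So your route is shorter and makes the Myhill--Nerode identification explicit, at the cost of resting on the recalled uniqueness of $F(X_L)$; the paper's route is self-contained and yields the slightly stronger structural statement that every trim DFA for $L_I$ surjects onto $Q^-$. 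Both arguments are sound, and the underlying mechanism (the pairing with $A(+)$ computes the Nerode congruence) is the same.
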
 

\begin{proof} Suppose that  $D=(Q,\delta,q_{\init},Q_{\t})$ is a deterministic FSA that accepts $L_I$ and assume, furthermore, that $D$ is \emph{trim}~\cite{E1}, that is, any state of $D$ is on a path from the initial state $q_{\init}$ to one of the accepting states. (An automaton for a given language with the minimal number of states is necessarily trim.) 

There is a surjective map $s:\Sigma^{\ast}\lra Q$ sending a word $\omega$ to $ q_{\init} \omega\in Q$. For each $q\in Q$ pick a word $\omega(q)\in \Sigma^{\ast}$ mapped to it by  this map. 
This word determines a $\Bool$-functional on $A(+)$ given by
$\alpha_I(\omega(q)\omega),$ $\omega \in \Sigma^{\ast}$. 
The functional depends only on $q$. Consequently, there is a map $u: Q\lra A(+)^{\ast}\cong A(-)$ taking a state $q$ to that functional. 
The composition $us:\Sigma^{\ast}\lra A(-)$ is necessarily the map $us(\omega)= \langle \omega |$. Likewise, the map $u$ factors through the inclusion $Q^-\subset A(-)$, $u: Q\stackrel{p}{\lra} Q^-\hookrightarrow A(-)$, for a unique $p$. Map $p$ is surjective, takes initial state to the initial state and intertwines the action of $\Sigma^{\ast}$ on $Q$ and $Q^-$. Consequently, any deterministic trip automaton for $L_I$ surjects onto the automaton $(Q^-,\delta, q_{\init},Q^-_{\t})$, and the latter is the unique deterministic automaton for $L_I$ with the minimal number of states. 
\end{proof} 

Note that it's possible for the set $Q^-$ to be all of $A(-)$, but, typically, it's a proper subset. For instance, the element $0\in A(-)$ is in $Q^-$ if and only if $L_I$ contains ``unrecoverable'' words $\omega$, that is, words such that $\omega\omega'\notin L_I$ for any word $\omega'$. Then necessarily $\langle \omega | = 0 \in A(-)$, and the inverse implication holds as well.  
\begin{remark} Subset $Q^+=\{|\omega\langle, \omega\in\Sigma^{\ast}\}$ of $A(+)$  naturally provides the minimal deterministic automaton for the opposite language $L_I^{\op}$. 
\end{remark}

Finite $\Bool$-semimodule $A(-)$ has a unique minimal set of generators $J\subset A(-)$. Necessarily $J$ consists of all irreducible elements, $J=\irr(A(-))$. Consider the induced surjection 
\begin{equation} 
\psi_J:\Bool^J\lra A(-)
\end{equation} 
from a free semimodule onto $A(-)$. 
The free monoid of words $\Sigma^{\ast}$ and the associated $\Bool$-semiring $\Bool[\Sigma^{\ast}]$ act on $A(-)$. We can lift this action to an action of $\Sigma^{\ast}$ on $\Bool^J$. For each letter $a\in \Sigma$ choose a lifting of the semimodule endomorphism $m_a$ to an endomorphism $\widetilde{m}_a$ of $\Bool^J$, so that the diagram below commutes: 
\vspace{0.1in}
\[ 
\xymatrix@-1pc{
\Bool^J \ar[rrr]^{\widetilde{m}_a} \ar[ddd]_{\psi_J} & & & \Bool^J \ar[ddd]^{\psi_J.} \\
& & & \\
& & & \\ 
A(-) \ar[rrr]^{m_a} & & & A(-)\\ 
}
\]

\vspace{0.1in}

The map $m_a$ is determined by its action on the elements of $J$. The lifting is determined by choosing  $\widetilde{m}_a(j)\in \psi^{-1}_J(m_a(j))\subset \Bool^J$ for each $a\in \Sigma$ and $j\in J$. If for some $a$ and $j$ the subset $\psi^{-1}_J(m_a(j))$ of $\Bool^J$ has cardinality greater than one, a lifting is not unique. 

Also choose a lifting $\widetilde{\langle \emptyset |}$ of the empty word element $\langle \emptyset |\in A(-)$ to $\Bool^J$. Again, such a lifting is unique if and only if the set $\psi^{-1}(\langle \emptyset |)$ has cardinality one. 

Define a  homomorphism $\alpha^J: \Bool^J\lra \Bool$  as the composition \[
\alpha^J\ :=\ \alpha_I\circ\psi_J \ : \ \Bool^J \stackrel{\psi_J}{\lra} A(-) \stackrel{\alpha_I}{\lra} \Bool 
\] of the projection onto $A(-)$ and evaluation 
\begin{equation}\label{eq_trace}
\alpha_I:A(-)\lra \Bool, \ \ \alpha_I(\langle \omega |) = \alpha_I(\omega).
\end{equation} 

\begin{prop} Any lifting as above gives rise to a minimal nondeterministic automaton $(J,\widetilde{\langle \emptyset |},\widetilde{m}, \alpha^J)$ for the language $L_I$. The set of such  liftings is in a bijection with the isomorphism classes of minimal nondeterministic automata for $L_I$. 
\end{prop}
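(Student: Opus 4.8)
The plan is to match the combinatorial data of a minimal NFA for $L_I$ with the data of a lifting $(\widetilde{m}, \widetilde{\langle \emptyset |})$ to the minimal free cover $\psi_J : \Bool^J \to A(-)$, using the general fact that minimal NFAs for a regular language have their state set in bijection with $\irr(A(-))$ (this is exactly the content recalled in the Summary and in Section~\ref{subsec_plus_minus}). So the first step is to make precise what ``minimal nondeterministic automaton'' means here: an NFA whose state set has cardinality $|\irr(A(-))|$, together with the well-known fact (via the ``universal automaton''/factorization picture, or via~\cite{E1,BR2}) that every such minimal NFA has its states canonically identified with the irreducibles of $A(-)$; equivalently, with $J$. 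I would state this identification explicitly at the start and use it throughout.

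Second, given a lifting, I would check that $(J, \widetilde{\langle \emptyset |}, \widetilde{m}, \alpha^J)$ really is an NFA accepting $L_I$. Here $\widetilde{\langle\emptyset|}\in\Bool^J$ determines a subset of $J$ (its support), the set of initial states; $\alpha^J : \Bool^J \to \Bool$ determines a subset of $J$ (the states on which it is $1$), the accepting states; and each $\widetilde{m}_a$, being a semimodule endomorphism of $\Bool^J$, is determined by where it sends each basis vector $j\in J$, and $\widetilde{m}_a(j)$ is a subset of $J$ — this is the transition relation $\delta(j,a)$. Acceptance of a word $\omega = a_1\cdots a_n$ in the NFA sense (existence of a path) is exactly the statement that $\alpha^J(\widetilde{m}_{a_n}\cdots \widetilde{m}_{a_1}(\widetilde{\langle\emptyset|})) = 1$ in $\Bool$, since composing Boolean matrices and Boolean vectors computes precisely the set of reachable states. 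Commutativity of the square in the statement gives $\psi_J(\widetilde{m}_{a_n}\cdots\widetilde{m}_{a_1}(\widetilde{\langle\emptyset|})) = m_{a_n}\cdots m_{a_1}(\langle\emptyset|) = \langle\omega|$, and then $\alpha^J = \alpha_I\circ\psi_J$ gives that this equals $\alpha_I(\langle\omega|) = \alpha_I(\omega)$. Hence the NFA accepts exactly $L_I$, and it has $|J| = |\irr(A(-))|$ states, so it is minimal.

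Third, for the bijection I would construct the inverse map: given a minimal NFA $N$ for $L_I$, identify its state set with $\irr(A(-)) = J$ via the canonical identification from the first step. Its set of initial states is a subset of $J$, hence an element of $\Bool^J$, which I take as $\widetilde{\langle\emptyset|}$; its set of accepting states gives a map $\Bool^J \to \Bool$, which I must verify equals $\alpha^J$ (this follows from the canonical identification: an irreducible is accepting in $N$ iff the corresponding state of $A(-)$ maps to $1$ under $\alpha_I$); its transition relation gives the endomorphisms $\widetilde{m}_a$. The one thing genuinely requiring argument is that this data is indeed a \emph{lifting}, i.e. that the square commutes and $\psi_J\circ\widetilde{\langle\emptyset|} = \langle\emptyset|$. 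This is where minimality of $N$ is used: because $N$ accepts $L_I$ and is minimal, its ``state semimodule'' (the subsemimodule of $\Bool^J$ generated by the reachable subsets, or dually by the right-residuals) projects onto $A(-)$ compatibly with the $\Sigma^{\ast}$-action and with initial/accepting data — this compatibility is precisely the commuting square. Then one checks the two constructions are mutually inverse, which is routine once the identifications are set up.

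The main obstacle, and the step I would spend the most care on, is the third paragraph: pinning down the precise sense in which the states of \emph{every} minimal NFA for $L_I$ are canonically the irreducibles of $A(-)$, and hence that the NFA data descends to a lifting. The subtlety is that a minimal NFA is not literally unique (unlike the minimal DFA), so ``canonical identification'' must be read as: the transition structure forces each state to act on $A(+)$ as a specific irreducible functional, exactly as in the proof of Proposition~\ref{prop_min_dfa} but now without the determinism hypothesis. I would either cite the relevant structure theorem for minimal NFAs from~\cite{E1,BR2} or reprove the needed piece: a trim NFA accepting $L$ with $|\irr(A(-))|$ states must realize each state as an irreducible element of $A(-)$ and each transition as the restriction of $m_a$, forced by the requirement that the generated Boolean combinations coincide with $A(-)$ and respect the pairing with $A(+)$.
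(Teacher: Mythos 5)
Your forward direction (a lifting yields an NFA with state set $J$ accepting exactly $L_I$) is fine and agrees with the paper's dictionary between liftings and automaton data; your verification of the accepted language via the commuting square and $\alpha^J=\alpha_I\circ\psi_J$ is if anything more explicit than the paper's. The genuine gap is in the converse and in the word \emph{minimal}. You take as the definition of minimality ``has $|\irr(A(-))|$ states'' and invoke, as a well-known fact to be cited from \cite{E1,BR2}, that every such NFA has its states canonically identified with $\irr(A(-))$. But minimality means fewest states among \emph{all} NFAs accepting $L_I$; the claims that this minimum equals $|J|$ and that in the equality case the automaton data descends to a lifting of $(m_a,\langle\emptyset|)$ through $\psi_J$ are the entire content of the second half of the proposition, and they cannot be quoted from those references. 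The automaton whose states are the irreducibles of $A(-)$ is the canonical residual automaton, and its relationship to minimal NFAs is precisely the delicate point; in particular minimal NFAs are not in general ``residual,'' so your fallback statement that each state of a trim minimal NFA ``is an irreducible element of $A(-)$ and each transition a restriction of $m_a$'' is too strong: under the identification the transitions are only \emph{lifts} of $m_a$ through $\psi_J$ (this is exactly the source of non-uniqueness), not restrictions of $m_a$.

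What the paper actually argues, and what is absent from your sketch, is the following. Given any trim NFA $F=(Q,\delta,Q_{\init},Q_{\t})$ for $L_I$, it defines $\phi:\Bool^Q\lra A(-)$ by $\phi(q)=\sum_{\omega\in W(q)}\langle\omega|$, where $W(q)$ is the set of words taking some initial state to $q$ (an infinite but stabilizing sum in a finite semilattice), checks that $\phi$ is a homomorphism of $\Bool$-semimodules compatible with the pairing with $A(+)$ and surjective, and then uses that a surjection of finite semilattices carries irreducibles onto irreducibles, so $|Q|=|\irr(\Bool^Q)|\ge|\irr(A(-))|=|J|$; equality forces $\phi$ to identify $Q$ with $J$ and the data $(\delta,Q_{\init},Q_{\t})$ with a lifting $(\widetilde m,\widetilde{\langle\emptyset|},\alpha^J)$. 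Your phrase ``the generated Boolean combinations coincide with $A(-)$ and respect the pairing with $A(+)$'' gestures at this, but without the map $\phi$, its surjectivity, and the irreducibles argument you have neither the lower bound $|Q|\ge |J|$ (hence no proof that the automata built from liftings are minimal) nor the descent of an arbitrary minimal NFA to a lifting, so the asserted bijection is not established.
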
 

\begin{proof} The nondeterministic automaton has $J$ as the set of states. The set of initial states is $\widetilde{\langle \emptyset |}$ (one initial state if $\widetilde{\langle \emptyset |}\in J$, which is a possible choice for the lifting if and only if $\langle \emptyset |\in J$). In the automaton, letter $a$ takes the state $j$ to the subset $\widetilde{m}_a(j)$ of $J$ (elements of $\Bool^J$ are in a bijection with subsets of $J$). In this way the map $\delta: \Sigma \times J \lra P(J)$ is described by $\delta(a,j)= \widetilde{m}_a(j) \subset J$. A state $j\in J$ is accepting if and only if $\alpha^J(j)=1$. 

\vspace{0.1in} 
 
Going the other way, suppose given a  nondeterministic automaton $F=(Q,\delta,Q_{\init},Q_{\t})$ for $L_I$ and assume that any state in $Q$ belongs to a path from one of the initial states to an acceptable state. To this automaton we associate a free $\Bool$-semimodule $\Bool^Q$ with an action of $\Sigma^{\ast}$ given by $\delta$, initial element $\langle \emptyset |_Q := \sum_{q\in Q_{\init}} q$, and trace map $\tr_Q(q)=1 \Leftrightarrow q\in Q_{\t}$. 

\vspace{0.05in} 

To relate $\Bool^Q$ and $A(-)$, 
for a $q\in Q$ denote by $W(q)$ the set of all words
 that can take from an initial state to $q$: 
\[ W(q) \ := \ \{\omega\in \Sigma^{\ast}| q\in \omega\langle \emptyset |_Q  \} . 
\]
Here $\omega\langle \emptyset |_Q$ is the subset of $Q$
 given by applying $\omega$ to the initial element $\emptyset |_Q$. In this construction we're using the bijection between elements of $\Bool^Q$ and subsets of $Q$. The set $W(q)$ might be infinite, but  sum $\sum_{\omega\in W(q)} \langle \omega |$ is a well-defined element of $A(-)$ (any countably infinite sum in a finite semilattice eventually stabilizes).   

Define a homomorphism of $\Bool$-semimodules $\phi:\Bool^Q\lra A(-)$ by $\phi(q)=\sum_{\omega\in W(q)} \langle \omega |$ and extending $\Bool$-semilinearly. Both $\Bool^Q$ and $A(-)$ come with the pairing map:  
\[ \Bool^Q\times A(+)
\lra \Bool, \ \ A(-)\times A(+)\lra \Bool, 
\] 
and $\phi$ intertwines the two pairings with $A(+)$. Map $\phi$ is surjective, since $A(-)\cong A(+)^{\ast}$ and $\Bool^Q$ maps surjectively onto the dual of $A(+)^{\ast}$, since automaton $F$ recognizes $L_I$. 

A surjective homomorphism of finite semilattices induces a surjection on their irreducible elements $\irr(\Bool^Q)=Q\twoheadrightarrow J=\irr(A(-))$. Thus, $|Q|\ge |J|$ and equality means that $\Bool^Q\cong \Bool^J$. Consequently,  all nondeterministic automata for $L_I$ with the minimal number of states are constructed via the above minimal free cover $\Bool^J\lra A(-)$ and by lifting the action of $\Sigma^{\ast}$ and $\langle \emptyset |$ to the minimal free cover. 
\end{proof} 

This correspondence also shows when a minimal nondeterministic automaton for a language is unique -- when there's a unique lifting of the action of $\Sigma$ on $A(-)$ to an action on the minimal free cover $\Bool^J$ and a unique lifting of element $\langle \emptyset |\in A(-)$ to $\Bool^J$. A sufficient but not necessary condition for uniqueness is that $A(-)$ is a free $\Bool$-semimodule. 

\begin{remark}
If we consider nondeterministic automata with a single initial state $q_{\init}$ and $\langle \emptyset | \notin J$, we can instead take $J\sqcup \{ \widetilde{\langle \emptyset |}\}$ as the set of states of the minimal automaton of that more restrictive type, the corresponding free $\Bool$-semimodule or rank $|J|+1$ and its surjection onto $A(-)$. Adding generators to the free semimodule leads to more possible liftings of the action of $\Sigma$ on it, of course.  
\end{remark}

We see that the state space $A(-)$ together with the action of $\Sigma^{\ast}$ on it and nondegenerate inner product $A(-)\otimes A(+)\lra \Bool$ given by concatenation and then evaluation via $\alpha$ gives us a {\it unified viewpoint on both deterministic and nondeterministic minimal automata} for the language $L_I$. We recover the minimal deterministic automaton by taking the subset of $A(-)$ corresponding to cobordisms (intervals) with outer boundary $-$ rather than their $\Bool$-semilinear combinations. Minimal nondeterministic automata are obtained by lifting the action of $\Sigma^{\ast}$ on $A(-)$ to an action on the minimal free cover $\Bool^J$ (minimal free $\Bool$-semimodule surjecting to $A(-)$). Set $J$ becomes the set of states of that nondeterministic automaton. 
Map $a$ takes $j\in J$ to a subset $a(j)$ of $J$, describing nondeterministic transitions for $a$ on $J$. 
The distinguished ``empty word" element is lifted to an  element of $\Bool^J$ (to a subset of $J$) describing the set of initial states of the automaton.  

\begin{remark} The set $Q_-\subset A(-)$ of the states of the minimal DFA for $L_I$ contains $J$ as a subset. 
\end{remark}

The diagram below summarizes how to recover minimal DFA and minimal NFAs for a language $L$ given the state space $A(-)$ for $L$. Minimal DFA is obtained by restricting to the images of words in $\Sigma^{\ast}$ in $A(-)$ rather than their semilinear combinations. Equivalently, 
the minimal DFA for the language $L_{I}$ is the $\Sigma$-stable subset in $A(-)$ generated by the empty word $\emptyset$. 
        
Minimal NFAs are given by taking the minimal cover of $A(-)$ by a free $\Bool$-semimodule, together with a lifting of the action of $\Sigma^{\ast}$ and $\langle \emptyset |$.  

\begin{center}
\begin{tikzpicture}[scale=0.8]
%\draw[thin,gray] (0,0) grid (6,4);

\draw[thick,->] (-0.5,1.75) arc (325:10:3.9mm);
\node at (-1.75,2) {$\widetilde{m}_a$};

\node at (0.1,2) {$\Bool^J$}; 
\node at (4.8,2) {free semimodule cover; minimal NFA };
\node at (4.9,1.5) {for $L_I$ with set of states $J=\irr(A(-))$};

\draw[thick, ->>] (0,1.5) -- (0,0.5);
\node at (0,0) {$A(-)$};

\draw[thick,->] (-0.75,-0.17) arc (325:10:3.9mm);
\node at (-2.0,0.1) {$m_a$};

\node at (3.9,0) {state space of $0$-manifold $-$};
\draw [thick, rotate=270, left hook->] (1.5,0) -- (0.5,0);
\node at (0.2,-2) {$Q_-$};
\draw[thick,->] (-0.35,-2.17) arc (325:10:3.9mm);
\node at (-1.6,-2) {$m_a$};

\node at (3.2,-2) {minimal DFA for $L_I$ };

\end{tikzpicture}
\end{center}

More generally, 
consider any surjective homomorphism $\psi:\Bool^U\lra A(-)$ from a finitely-generated free $\Bool$-semimodule $\Bool^U$ onto $A(-)$, where $U$ is a finite set. 
Choose any lifting $\widetilde{m}_a$ of the map $m_a:A(-)\lra A(-)$ to $\Bool^U$ and a lifting of $\langle \emptyset |\in A(-)$ to 
$\widetilde{\langle \emptyset |}\in \Bool^U$. Map $m_a$ is determined by the action on $a$ on elements of $U$ and each $m_a(j)$ may have more than one lifting to $\Bool^U$, ditto for $\langle \emptyset |$. Trace  on $\Bool^U$ (homomorphism to $\Bool$ which determines the language) is determined via the projection onto $A(-)$ composed with the trace $\alpha_I$ on $A(-)$, see \eqref{eq_trace}. This results in more general nondeterministic automata for the language $L_I$. 

 Non-uniqueness of minimal nondeterministic automata for a language $L_I$ is explained by multiple ways of lifting the action of $\Sigma^{\ast}$ to the minimal free cover of $A(-)$. There exist various refinements of nondeterministic automata which are unique for a given regular language, including \emph{jiromata}~\cite{DLT02}, \emph{\'atomata}~\cite{BT11}, \emph{minimal xor automata}~\cite{VG10}, the universal automaton~\cite{LS08}, and \emph{residual FSA}~\cite{DLT02,DLT04}, also see~\cite{Cla15,AMMU,MU}. It's almost certain that constructions of state spaces $A(-)$ and $A(+)$ are contained, implicitly or explicitly, in some of these refinements. 

\vspace{0.05in} 

The understanding that one can pass from the minimal automaton for a regular language to semilinear combinations of its states and to $A(-)$ and then construct various nondeterministic automata for the language as free covers of $A(-)$ clearly goes back at least several decades to the early work on automata in 1960s and 1970s. (We don't know a definitive early reference, though.) 
Similar ideas feature prominently in the recent work of Myers~\cite{My1,My2} and in~\cite{AMMU,MMU,MU}, also see papers of Clark~\cite{Cla15} on the \emph{syntactic concept lattice} and references there. 

%%%%%%%%%%%%%%%%%%%%
% state spaces A(+-)
%%%%%%%%%%%%%%%%%%%%

\subsection{State space \texorpdfstring{$A(+-)$}{A(+-)}} \label{subsec-Apm} 

$\quad$

{\it Associative semiring structure.}
Consider the state space $A(+-)=\Hom_{\CCC_{\alpha}}(\emptyset,+-)$ spanned by diagrams with boundary $+-$. It's a $\Bool$-semimodule and is  naturally an associative unital semiring. The multiplication is given by composition with a ``cap" diagram as shown in Figure~\ref{Aplusminus-multn}.  
The unit element $1_{+-}$ for this multiplication is shown in Figure~\ref{Aplusminus} left. 

\input{Aplusminus-multn}

\begin{remark} $A(+-)$ is a unital finite quantale: it is a $\Bool$-semimodule with a compatible associative unital multiplication. Equivalently, $A(+-)$ is a finite semiring with idempotent addition. 
\end{remark} 

$\Bool$-semimodule $A(+-)$ has a spanning set that consists of elements of two types, as shown in Figure~\ref{Aplusminus}. 
Elements of the first type are an arc with a word $\omega$ on it; elements of the second type $\uparrow\downarrow\!(\omega_1, \omega_2)$ are a pair of intervals with words $\omega_1,\omega_2 \in \Sigma^*$ on them. 
Elements of the second type span a two-sided ideal $\I_{\uparrow\downarrow}(+-)$ in the semiring $A(+-)$. 

\input{Aplusminus}

%A spanning set  for a $\Bool$-semimodule $A(+-)$ is given in Figure~\ref{Aplusminus}. 
%Multiplication in $A(+-)$ is defined in Figure~\ref{Aplusminus-multn}. 

 $A(+-)$ also has a subring $A_{\circleft}(+-)$ spanned by labeled arcs $\{ \circleft(\omega) : \omega\in \Sigma^*\}$ (these are elements of the first type). The ideal and the subring may intersect nontrivially.   

\input{Aplusminus-ex}

{\it Circular-interval deterministic automata.}
$\Bool$-semimodule $A(+-)$ contains a subset $Q^{\cup}_{+-}$ of elements represented by diagrams of the form $\circleft(\omega)$, over all $\omega\in \Sigma^{\ast}$. 
%and $\uparrow\downarrow(\omega_1,\omega_2)$. 
Subset $Q^{\cup}_{+-}$ together with the subsets $Q_+\subset A(+),Q_-\subset A(-)$ can be turned into an automaton that determines both the circular language $L_{\circ}$ and the interval language $L_I$.  

First, part of the structure of $Q^{\cup}_{+-}$ is that of a \emph{deterministic circular automaton} for the circular language $L_{\circ}$, see Section~\ref{subsec_circular}. This circular automaton is usually not minimal for $L_{\circ}$, since construction of $A(+-)$ uses the interval language $L_I$ as well. Furthermore, there are maps 
\begin{equation*}
    \tau_+ \ : \ Q^{\cup}_{+-}\lra  Q_+,  \ \ Q^{\cup}_{+-}\lra Q_-
\end{equation*}
where a $+-$-arc with a word $\omega$ on it is turned into an arc with one endpoint floating, by composing with a corresponding morphism from $+-$ to $+$ or $-$, see Figure~\ref{sect04-001}. These maps are induced by the corresponding $\Bool$-homomorphisms 
$A(+-)\lra A(+), A(+-)\lra A(-)$. Map $\tau_+$ intertwines  the action $\delta_{r}$ of $\Sigma$ on $Q^{\cup}_{+-}$ (see Section~\ref{subsec_circular})  with the action of $\Sigma$ on $Q_+$. Likewise, $\tau_-$ intertwines the action $\delta_{\ell}$ of $\Sigma$ on $Q^{\cup}_{+-}$ with the action of $\Sigma$ on $Q_-$. 

\vspace{0.1in} 

\input{sect04-001}

Let 
\begin{equation} 
Q[+-] := Q^{\cup}_{+-}\sqcup Q_{+} \sqcup Q_{-}
\end{equation} 
be the disjoint union of these three sets. 
Sets $Q_-,Q_+$ define minimal DFA for $L$ and $L^{\op}$ as explained in Section~\ref{subsec_plus_minus}. 
Take 
$Q[+-]$ as the set of states of our \emph{mixed automaton} (\emph{circular-interval automaton}).
Circular automaton structure on $Q^{\cup}_{+-}$ and commuting actions $\delta_{\ell},\delta_r$ of $\Sigma^{\ast}$ on $Q^{\cup}_{+-}$ are compatible, via $\tau_{\pm}$, with the automata structures on $Q_-$ and $Q_+$, giving a deterministic mixed automaton for $L=(L_I,L_{\circ})$. 

\vspace{0.1in} 

 In our definition we used the subset $Q^{\cup}_{+-}\subset A(+-)$. It's possible to instead use a larger subset. Let
\[
Q^{\uparrow\downarrow}_{+-} \ := \ 
\{\uparrow\downarrow\!(\omega_1, \omega_2)\ | \ \omega_1,\omega_2\in\Sigma^{\ast}\subset A(+-)
\]
and 
\[ Q_{+-} \ := \ Q^{\cup}_{+-} \cup Q^{\uparrow\downarrow}_{+-} \ \subset A(+-) 
\] 
(note that these two subsets are not disjoint, in general). Maps $\tau_+,\tau_-$ extend to maps 
\[ \tau_+ \ : \ Q_{+-}\lra Q_+\sqcup \{0\}, \ \ 
\tau_- \ : \ Q_{+-}\lra Q_-\sqcup \{0\}. 
\]
On diagrams of the second type they are also computed by composing with diagrams that terminate one of the endpoints, see Figure~\ref{sect04-001}, so that 
\[ \tau_-(\uparrow\downarrow\!(\omega_1, \omega_2)) = 
    \alpha_{I}(\omega_1) \langle \omega_2 | = 
    \begin{cases}
      \langle \omega_2 | & \mathrm{if} \ \ \omega_1\in L_I, \\
       0   &    \mathrm{otherwise} 
    \end{cases}
\]
and likewise for $\tau_+$. The set of states of this mixed automata (of the second kind) is taken to be 
\begin{equation} \label{eq_qus} Q_{+-} \sqcup Q_+\sqcup Q_- \sqcup \{0\},
\end{equation}
with the caveat that if $0\in A(+)$ is an element of $Q_+$ (likewise for $0\in A(-)$ and $0\in A(+-)$), these zero states should be identified with $0$ in \eqref{eq_qus}. 

Maps $\tau_{\pm}$ above naturallly extends to the transition rules for this \emph{mixed automaton of the second kind}. This deterministic \emph{mixed automaton of the second kind} additionally contains another distinguished element, the state $\uparrow\downarrow\!(\emptyset, \emptyset)\in Q_{+-}$.  
We leave it to an interested reader to define \emph{nondeterministic mixed automata} and refer to a discussion in the first part of Section~\ref{subsec-circular} for related examples of nondeterministic circular automata. Having to lift two commuting actions of $\Sigma^{\ast}$ (given by multiplications at $+$ and $-$ endpoints) to a free cover of $A(+-)$ is an obstruction to an easy construction of nondeterministic circular and mixed automata, see Section~\ref{subsec-circular}.

 \begin{remark} \label{rmk_monoid} Set $Q_{+-}$ is naturally a monoid under the multiplication map in $A(+-)$. There are surjections of monoids
 \[ 
 \xymatrix@-1pc{
 \Sigma^{\ast} \ar@{->>}[rr] & &  Q_{+-} \ar@{->>}[rr] & &  E_{L_I}, 
 }
 \] 
 where $E_{L_I}$ is the syntactic monoid of the interval language $L_I$, see Section~\ref{subset_fsa}.
 \end{remark} 
 
 \begin{remark}
  When the circular language $L_{\circ}$ is empty, it would make sense to compare the semiring  $A_{\circleft}(+-)$ to Pol\'ak's \emph{syntactic semiring} of $L_I$, see~\cite{Pol01,Pol03,Pol-lecture-03}. 
 \end{remark}

%%%%%%%%%%%%%%%%%%%%%%
% State spaces A(+n) 
%%%%%%%%%%%%%%%%%%%%%%

\subsection{State 
spaces for general sign sequences} \label{subsec_Aplusn} 
$\quad$

\begin{prop} \label{prop_tensor_product_map} For any regular $\alpha$ and any sign sequences $\varepsilon,\varepsilon'$ there's a natural map of $\Bool$-semimodules 
\begin{equation}
    A(\varepsilon)\otimes A(\varepsilon') \stackrel{t_{\varepsilon,\varepsilon'}}{\lra} A(\varepsilon \varepsilon') .
\end{equation}
The image $\mathrm{Im}(t_{\varepsilon,\varepsilon'})$ surjects onto $A(\varepsilon)\undotimes A(\varepsilon')$.
If $\Bool$-semimodule $A(\varepsilon)$ or $A(\varepsilon')$ is projective, map $t_{\varepsilon,\varepsilon'}$ is an inclusion. 
\end{prop}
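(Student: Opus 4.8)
The plan is to build $t_{\varepsilon,\varepsilon'}$ straight from the monoidal structure of $\CCC_{\alpha}$ and then to compare its image with the reduced tensor product using the nondegenerate pairings attached to sign sequences. First I would note that since $\emptyset\sqcup\emptyset=\emptyset$, placing diagrams side by side gives a map $\Hom_{\CCC_{\alpha}}(\emptyset,\varepsilon)\times\Hom_{\CCC_{\alpha}}(\emptyset,\varepsilon')\to\Hom_{\CCC_{\alpha}}(\emptyset,\varepsilon\varepsilon')$, $(f,g)\mapsto f\otimes g$, which on generators reads $[M_1]\otimes[M_2]\mapsto[M_1\sqcup M_2]$. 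Composition in $\CCC_{\alpha}$ is semibilinear, so this assignment is bisemilinear, hence factors through the tensor product to give $t_{\varepsilon,\varepsilon'}\colon A(\varepsilon)\otimes A(\varepsilon')\to A(\varepsilon\varepsilon')$; naturality is automatic since disjoint union commutes with gluing. Regularity of $\alpha$ ensures that $A(\varepsilon),A(\varepsilon'),A(\varepsilon\varepsilon')$ are finite, so that the duality $A(\varepsilon)^{\ast}\cong A(\varepsilon^{\ast})$ and the reduced tensor product are available.

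For the surjectivity statement — the main step — I would choose the minimal generating sets $\{f_i'\}_{i\in I}=\irr(A(\varepsilon^{\ast}))$ and $\{g_j'\}_{j\in J}=\irr(A(\varepsilon'^{\ast}))$, viewed as functionals on $A(\varepsilon)$, resp.\ $A(\varepsilon')$, via the canonical pairings. The resulting embeddings $A(\varepsilon)\hookrightarrow\Bool^{I}$, $f\mapsto(f_i'(f))_i$, and $A(\varepsilon')\hookrightarrow\Bool^{J}$, $g\mapsto(g_j'(g))_j$, land in free (hence projective) semimodules, so by definition of $\undotimes$ the induced map $\phi\colon A(\varepsilon)\otimes A(\varepsilon')\to\Bool^{I}\otimes\Bool^{J}=\Bool^{I\times J}$, $f\otimes g\mapsto(f_i'(f)\,g_j'(g))_{i,j}$, has image $A(\varepsilon)\undotimes A(\varepsilon')$. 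I would then show $\phi$ factors through $t_{\varepsilon,\varepsilon'}$: set $h_{ij}:=t_{\varepsilon'^{\ast},\varepsilon^{\ast}}(g_j'\otimes f_i')\in A(\varepsilon'^{\ast}\varepsilon^{\ast})=A((\varepsilon\varepsilon')^{\ast})$ and define $\bar\phi\colon A(\varepsilon\varepsilon')\to\Bool^{I\times J}$, $z\mapsto(\langle z,h_{ij}\rangle)_{i,j}$, where $\langle\,,\,\rangle$ is the nondegenerate pairing of $A(\varepsilon\varepsilon')$ with $A((\varepsilon\varepsilon')^{\ast})$. With the order-reversing convention $(\varepsilon\varepsilon')^{\ast}=\varepsilon'^{\ast}\varepsilon^{\ast}$, the caps closing $\varepsilon\varepsilon'$ against $(\varepsilon\varepsilon')^{\ast}$ join the $\varepsilon$-strands to the $\varepsilon^{\ast}$-strands and the $\varepsilon'$-strands to the $\varepsilon'^{\ast}$-strands without interleaving, so closing $t_{\varepsilon,\varepsilon'}(f\otimes g)$ against $h_{ij}$ produces the disjoint union of the closure of $(f,f_i')$ with the closure of $(g,g_j')$. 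Multiplicativity of $\alpha$ then gives $\langle t_{\varepsilon,\varepsilon'}(f\otimes g),h_{ij}\rangle=f_i'(f)\,g_j'(g)$, i.e.\ $\bar\phi\circ t_{\varepsilon,\varepsilon'}=\phi$ on the spanning set $\{f\otimes g\}$ and hence everywhere. Since $t_{\varepsilon,\varepsilon'}$ maps onto $\mathrm{Im}(t_{\varepsilon,\varepsilon'})$, it follows that $\bar\phi$ maps $\mathrm{Im}(t_{\varepsilon,\varepsilon'})$ onto $\mathrm{Im}(\phi)=A(\varepsilon)\undotimes A(\varepsilon')$. In the case where $A(\varepsilon)$ or $A(\varepsilon')$ is projective, Proposition~\ref{prop_inject} says $\phi$ is injective, equivalently (by \eqref{eq_tensor_two}) the canonical surjection $A(\varepsilon)\otimes A(\varepsilon')\to A(\varepsilon)\undotimes A(\varepsilon')$ is an isomorphism; since this canonical map is precisely $\bar\phi\circ t_{\varepsilon,\varepsilon'}$, injectivity of it forces injectivity of $t_{\varepsilon,\varepsilon'}$, which is therefore an inclusion.

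The hard part will be the geometric decomposition identity $\langle t_{\varepsilon,\varepsilon'}(f\otimes g),\,t_{\varepsilon'^{\ast},\varepsilon^{\ast}}(g'\otimes f')\rangle=\langle f,f'\rangle\,\langle g,g'\rangle$: one must verify carefully that with the order-reversing duality the nested caps closing $\varepsilon\varepsilon'$ against $(\varepsilon\varepsilon')^{\ast}$ really keep the $\varepsilon$- and $\varepsilon'$-blocks separate, so that the closed $1$-manifold obtained is a genuine disjoint union, at which point multiplicativity of $\alpha$ finishes the identity. Everything else — the construction of $t_{\varepsilon,\varepsilon'}$, identifying the various embeddings into free semimodules, and recognizing $A(\varepsilon)\undotimes A(\varepsilon')$ as $\mathrm{Im}(\phi)$ — is routine bookkeeping with finite $\Bool$-semimodules and the definitions of Section~\ref{subsec_semiduality}.
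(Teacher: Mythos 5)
Your proof is correct and follows essentially the same route as the paper: embed $A(\varepsilon)$ and $A(\varepsilon')$ into free semimodules via the dual state spaces $A(\varepsilon^{\ast})$, $A(\varepsilon'^{\ast})$, identify the image of the induced map with $A(\varepsilon)\undotimes A(\varepsilon')$, and factor that map through $t_{\varepsilon,\varepsilon'}$ using the closure pairing of $A(\varepsilon\varepsilon')$ with $A((\varepsilon\varepsilon')^{\ast})$, with injectivity in the projective case coming from Proposition~\ref{prop_inject}. The paper leaves the comparison map implicit (``one implicitly uses inclusion of these spaces into free semimodules that may be bigger''), whereas your explicit $\bar\phi$ built from the product functionals $h_{ij}$ and the disjoint-union/multiplicativity check for the non-crossing nested caps simply spells out that same step in detail.
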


\begin{proof}
   Finite $\Bool$-semimodules  $A(\varepsilon),A(\varepsilon^{\ast})$ are dual via the natural pairing 
   $A(\varepsilon)\otimes A(\varepsilon^{\ast})\lra \Bool$. 
   There are injective homomorphisms of these semimodules into a finite free $\Bool$-semimodule $\Bool^S$ and its dual such that the pairing comes from the canonical pairing on $\Bool^S$ and its dual. 
   We can do the same with $A(\varepsilon'),A(\varepsilon^{\ast '})$ via a free $\Bool$-semimodule $\Bool^{S'}$ and its dual. Then $A(\varepsilon)\otimes A(\varepsilon')$ maps to $B^{S\times S'}$. The image of this map is the reduced tensor product  $A(\varepsilon)\undotimes A(\varepsilon')$. 
   
   In the construction of the pairing $A(\varepsilon\varepsilon')\otimes A((\varepsilon\varepsilon')^{\ast})\lra \Bool$ one implicitly uses inclusion of these spaces into free semimodules that may be bigger than $B^{S\times S'}$. That
   still gives a canonical surjection of $\mathrm{Im}(t_{\varepsilon,\varepsilon'})$ onto the reduced tensor product $A(\varepsilon)\undotimes A(\varepsilon')$. 
\end{proof}

\begin{prop}
 For any regular $\alpha$ and $n\ge 0$ the natural $\Bool$-semimodule homomorphisms
 \begin{equation}
     A(+)^{\undotimes n}\lra A(+^n), \hspace{4mm} 
     A(-)^{\undotimes n}\lra A(-^n)
 \end{equation}
 are isomorphisms. 
\end{prop}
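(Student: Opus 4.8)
We treat the first isomorphism; the second is obtained by exchanging the roles of $+$ and $-$ throughout (equivalently, by passing to the opposite language). Write $t\colon A(+)^{\otimes n}\lra A(+^n)$ for the iterated composition of the maps $t_{\varepsilon,\varepsilon'}$ of Proposition~\ref{prop_tensor_product_map}; the natural morphism $A(+)^{\undotimes n}\lra A(+^n)$ is, by construction, the factorization of $t$ through the defining quotient $A(+)^{\otimes n}\twoheadrightarrow A(+)^{\undotimes n}$. The plan is to show that $t$ is \emph{surjective} and that its composition with a natural injection $A(+^n)\hookrightarrow\Bool^{S^n}$ realizes exactly the map whose image defines $A(+)^{\undotimes n}$. \textbf{Step 1 (surjectivity of $t$).} This is the only place the all-plus hypothesis enters. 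A diagram representing a morphism $\emptyset\lra +^n$ reduces, after evaluating its floating components via $\alpha$, to a disjoint union of components each meeting the top $0$-manifold $+^n$. A component with two top endpoints would be an arc joining two of the $n$ points, and traversing such an arc reverses the co-orientation relative to the top boundary, so its two endpoints carry opposite signs — impossible when all top signs equal $+$. Hence each component is a ``lollipop'': one endpoint on $+^n$, the other floating and decorated by a word. Such a diagram is precisely the side-by-side product $t(|\omega_1\langle\otimes\cdots\otimes|\omega_n\langle)$ of $n$ spanning elements of $A(+)$, so $t$ is onto. (By contrast $t_{+,-}$ is not onto $A(+-)$, because of the arc element $1_{+-}$.)

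\textbf{Step 2 (a natural embedding $A(+^n)\hookrightarrow\Bool^{S^n}$).} Fix a finite generating set $S\subset A(-)$, say $S=\irr(A(-))$. Since the pairing $A(-)\otimes A(+)\lra\Bool$ is nondegenerate and $S$ generates $A(-)$, the map $j_1\colon A(+)\lra\Bool^{S}$, $v\mapsto((u,v))_{u\in S}$, is injective. Applying Step 1 to the sign sequence $-^n$, the elements $\langle u_1|\otimes\cdots\otimes\langle u_n|$ with $u_i\in S$ have images spanning $A(-^n)$; since the pairing $A(-^n)\otimes A(+^n)\lra\Bool$ of the gligible construction is nondegenerate, pairing against these spanning elements defines an injection $j\colon A(+^n)\lra\Bool^{S^n}$.

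\textbf{Step 3 (compatibility and conclusion).} The computation that makes everything fit is: on pure tensors the pairing $A(-^n)\otimes A(+^n)\lra\Bool$ is the product of the pairings $A(-)\otimes A(+)\lra\Bool$. Indeed, capping $\langle u_1|\cdots\langle u_n|$ against $|\omega_1\langle\cdots|\omega_n\langle$ via $\ev'_{-^n}$ merges the $i$-th minus-lollipop with the matching plus-lollipop into a single decorated interval and creates no circles, so the closed diagram factors as a product of evaluations $\alpha_I$. Hence the composite $A(+)^{\otimes n}\xrightarrow{\,t\,}A(+^n)\xrightarrow{\,j\,}\Bool^{S^n}$ equals, up to the fixed reversal of tensor factors imposed by the convention $(+^n)^{\ast}=-^n$ with nested caps, the canonical map $A(+)^{\otimes n}\to(\Bool^{S})^{\otimes n}\cong\Bool^{S^n}$ induced by $j_1$ — which is precisely the map whose image \emph{is} $A(+)^{\undotimes n}$ by definition of the reduced tensor product (Proposition~\ref{prop_inject}), taken here through the free, hence projective, semimodule $\Bool^{S}$. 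Since reversal of tensor factors is an automorphism (of both source and target), we obtain, as subsemimodules of $\Bool^{S^n}$,
\[
A(+)^{\undotimes n}\ =\ \mathrm{Im}\bigl(A(+)^{\otimes n}\xrightarrow{\,j\circ t\,}\Bool^{S^n}\bigr)\ =\ j\bigl(\mathrm{Im}\,t\bigr)\ =\ j\bigl(A(+^n)\bigr),
\]
the last equality by Step 1. As $j$ is injective, it restricts to an isomorphism $A(+^n)\xrightarrow{\ \sim\ }A(+)^{\undotimes n}$, and chasing the definitions shows its inverse is the natural map $A(+)^{\undotimes n}\lra A(+^n)$, which is therefore an isomorphism.

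\textbf{Main obstacle.} The only non-formal points are the two geometric inputs: the orientation obstruction ruling out top-to-top arcs in Step 1 (exactly what fails for mixed sign sequences, where the analogous statement is false), and the ``no circles form'' bookkeeping for the iterated cap in Step 3, together with the harmless reversal permutation coming from the duality convention. Everything else is semimodule bookkeeping built on Propositions~\ref{prop:semimod-gen} and~\ref{prop_inject} and the stated properties of the reduced tensor product.
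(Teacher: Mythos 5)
Your proof is correct and follows essentially the same route as the paper's: surjectivity via the same observation that every generator of $A(\pm^n)$ decomposes into $n$ arcs each with one outer and one floating endpoint, and injectivity by identifying $A(+^n)$ with the image of $A(+)^{\otimes n}$ in $\Bool^{S^n}$. Your Steps 2–3 simply make explicit what the paper compresses into ``injectivity follows from the definition of $\undotimes$'' (using the pairing-compatible free embeddings already appearing in the proof of Proposition~\ref{prop_tensor_product_map}), so there is no substantive difference in approach.
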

If $A(-)$ (equivalently, $A(+)$), is a projective semimodule,  $\undotimes$ can be replaced by $\otimes$ in these isomorphisms. 

\begin{proof}
 These maps are surjective by construction. Indeed, 
 decorated cobordisms that generate the space $A(+^n)$ consist of $n$ arcs, each with a single endpoint at top boundary and with one floating endpoint. Such a cobordism is a product of $n$ cobordisms, one for each endpoint, and the induced map $A(+)^{\otimes n}\lra A(+^n)$ is surjective (likewise for the minus sign spaces). The surjective homomorphism factors through the map from the reduced tensor product, giving the above isomorphism  (injectivity follows from definition of $\undotimes$).
\end{proof}

Beyond these simple observations, we don't peek here into the structure of $A(\varepsilon)$ for general sequences $\varepsilon$ and regular evaluations $\alpha$, with the exception of Theorem~\ref{them_TQFT} in Section~\ref{subsec_decomp} which treats the simplest case from the structural viewpoint, which is when $\alpha$ gives rise to a $\Bool$-valued TQFT.

%%%%%%%%%%%%%%%%%%%
% decomp of id 
%%%%%%%%%%%%%%%%%%%

\subsection{Decomposition of the identity} \label{subsec_decomp} 
To a regular language $L\subset \Sigma^{\ast}$ we can assign state spaces $A_L(+)$ and $A_L(-)$, which are dual $\Bool$-semimodules, equipped with right (respectively left) action of the free monoid $\Sigma^{\ast}$. These spaces are naturally isomorphic to $A(+)$, respectively $A(-)$, for any evaluation function that extends $L$ via a circular language $L_{\circ}$. Independently, they can be defined by starting with free $\Bool$-semimodules 
\[  A'(+) = \Bool[\Sigma^{\ast}], \ \ A'(-) = \Bool[\Sigma^{\ast}], 
\] 
each with a basis of all words in $\Sigma$, and forming the (semi)bilinear form 
\begin{equation}
( \:\:,\:\: )_L \ : \  A'(-)\otimes A'(+)\lra \Bool, \quad  (\omega,\omega')_L \ := \ \alpha_L(\omega\omega'), 
\end{equation} 
based on the evaluation $\alpha_L$ associated with $L$ (so that $\alpha_L(\omega)=1\Leftrightarrow \omega\in L$). Define $A(-)$ and $A(+)$ as the quotient $\Bool$-semimodules of $A'(-)$ and $A'(+)$, respectively, by the kernel of the bilinear form. Thus, $\sum_i v_i=\sum_j w_j$ in $A(-)$ if and only if $\sum_i \alpha_L(v_i u) = \sum_j \alpha_L(w_j u)$ for any $u\in \Sigma^{\ast}$. 

\vspace{0.1in} 

A \emph{decomposition of the identity} for $L$ is a 
finite subset 
\[
S\subset \Sigma^{\ast}\times \Sigma^{\ast}, \hspace{1cm}  S=\{(u_1,v_1),\ldots, (u_m,v_m)\}, \hspace{1cm}  u_i,v_i \in \Sigma^{\ast}
\]
so that for any $x,y\in \Sigma^{\ast}$
\begin{equation} \label{eq_id_decomp} \alpha_L(xy) \ = \ \sum_{i=1}^m \alpha_L(xu_i) \alpha_L(v_iy). 
\end{equation} 
The relation says that, for any words $x,y$, the word $xy$ is in $L$ if and only if for some $i$ both words $xu_i$ and $v_iy$ are in $L$. 
Diagrammatically, we can represent this as a relation in 
Figure~\ref{DEC-ID}. In that figure, we are assuming that top and bottom endpoints are closed up using short arcs (labelled $x$ and $y$ respectively), not using a single arc connecting these two points from the outside. Interpreting this relation inside $\CCC_{\alpha}$ would require picking a circular language, in addition to $L$. Instead, in Figure~\ref{DEC-ID} we can restrict to closures that don't create circles.  

\input{DEC-ID}

A choice of $S$ is not unique. In particular, word $u_i$ can be replaced by any word in its equivalence class in ${}_LE$, ditto for $v_i$ and $E_L$, see Section~\ref{subset_fsa} for notations. 
More generally, we may ask for elements $u_i \in A(+)$, $v_i\in A(-)$ such that relation \eqref{eq_id_decomp} holds for any $x,y\in \Sigma^{\ast}$ and call a \emph{decomposition of the identity} the corresponding subset $S\subset A(+)\times A(-)$. In full generality, a decomposition of the identity is an element $\id_L\in A(+)\otimes A(-)$, $\id_L=\sum_i u_i\otimes v_i$, such that  \eqref{eq_id_decomp} holds for any $x,y\in \Sigma^{\ast}$. 

\vspace{0.05in} 

Note that any language with a decomposition of the identity is necessarily regular. 
Not any regular language has a decomposition of the identity. 

\begin{prop}  
A regular language $L$ has a decomposition of the identity if and only if $A(-)$ is a projective $\Bool$-semimodule.  
\end{prop}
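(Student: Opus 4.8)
## Proof Proposal

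\textbf{Overall approach.} The plan is to show that a decomposition of the identity $\id_L = \sum_i u_i \otimes v_i \in A(+)\otimes A(-)$ is the same data as a coevaluation map exhibiting $A(-)$ (equivalently $A(+)$) as a rigid object, and then invoke the earlier analysis (Proposition~\ref{prop_proj_coev} and the surrounding discussion around \eqref{coev_open_sets_3}) which established that such a coevaluation exists, for the standard evaluation pairing, if and only if the underlying $\Bool$-semimodule is projective.

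\textbf{Step 1: Translate \eqref{eq_id_decomp} into a triangle identity.} Recall that $A(-)$ and $A(+)$ are dual $\Bool$-semimodules via the nondegenerate pairing $\ev \colon A(-)\otimes A(+)\lra \Bool$, $\ev(\langle\omega|\otimes|\omega'\langle)=\alpha_I(\omega\omega')$, which was the content of Section~\ref{subsec_plus_minus}. Given a candidate $\id_L=\sum_i u_i\otimes v_i\in A(+)\otimes A(-)$, define $\coev\colon \Bool\lra A(+)\otimes A(-)$, $\coev(1)=\id_L$. I would first check that condition \eqref{eq_id_decomp}, namely $\alpha_L(xy)=\sum_i \alpha_L(xu_i)\alpha_L(v_iy)$ for all $x,y\in\Sigma^\ast$, is \emph{precisely} the statement that the composite
\[
A(-)\xrightarrow{\;\id\otimes\coev\;}A(-)\otimes A(+)\otimes A(-)\xrightarrow{\;\ev\otimes\id\;}A(-)
\]
is the identity of $A(-)$. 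Indeed, evaluating this composite on a spanning element $\langle x|$ gives $\sum_i \ev(\langle x|\otimes|u_i\langle)\, v_i = \sum_i \alpha_I(xu_i)\,v_i$; pairing the result against $|y\langle\in A(+)$ yields $\sum_i\alpha_I(xu_i)\alpha_I(v_iy)$, while pairing $\langle x|$ itself against $|y\langle$ yields $\alpha_I(xy)$. Since the $|y\langle$ span $A(+)$ and the pairing is nondegenerate (separating), equality for all $x,y$ is equivalent to the composite being $\id_{A(-)}$. By the symmetric role of $A(+)$ and $A(-)$ under swapping $x$ and $y$, the companion triangle identity for $A(+)$ holds automatically as well. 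Thus: \emph{$L$ admits a decomposition of the identity iff the standard evaluation pairing $A(-)\otimes A(+)\lra\Bool$ admits a coevaluation satisfying the isotopy relations of Figure~\ref{cupcap}.}

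\textbf{Step 2: Invoke the projectivity criterion.} Now apply the analysis already carried out in the excerpt following \eqref{coev_open_sets_3}: for a finite $\Bool$-semimodule $M=A(-)$ with dual $M^\ast\cong A(+)$ and the \emph{standard} evaluation map, a coevaluation $\coev_M\colon\Bool\lra M\otimes M^\ast$ satisfying the triangle identity forces, writing $\coev_M(1)=\sum_i a_i\otimes b_i$ with $b_i\in M^\ast$ assembled into $b\colon M\lra\Bool^n$ and $a_i\in M$ into $a\colon\Bool^n\lra M$, the relation $a\circ b=\id_M$ — i.e.\ realizes $M$ as a retract of a free semimodule, hence projective. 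Conversely, if $A(-)$ is projective, Proposition~\ref{prop_proj_coev} (together with \eqref{eq_coev_map}) supplies such a $\coev$, and unwinding it through Step~1 produces a decomposition of the identity explicitly (one may take $a_1,\dots,a_n=\irr(A(-))$ lifted to $\Bool^n$, yielding the minimal decomposition). One mild bookkeeping point: the elements $u_i,v_i$ in a decomposition of the identity a priori live in $A(+)\times A(-)$ rather than being images of actual words; the more restrictive "words only" version in \eqref{eq_id_decomp} is recovered using that $Q_+$ and $Q_-$ generate $A(+)$ and $A(-)$ as semimodules and that $\alpha_I$ is additive, so any coefficient semilinear combination can be rewritten over a word-indexed spanning set — I would note this but not belabor it.

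\textbf{Main obstacle.} The genuinely substantive point is Step~1 — recognizing that the somewhat opaque combinatorial identity \eqref{eq_id_decomp} is exactly a triangle identity for the pairing that is already on the table. Once that dictionary is in place, Step~2 is essentially a citation of the self-contained retract argument given earlier in the paper. A secondary subtlety worth a sentence is that projectivity of $A(-)$ and of $A(+)=A(-)^\ast$ are equivalent (Proposition~\ref{prop_frobenius}), so the statement is symmetric in $+$ and $-$ and it doesn't matter which side we phrase it on; this also reconciles the fact that a decomposition of the identity, being an element of $A(+)\otimes A(-)$, treats the two sides on equal footing.
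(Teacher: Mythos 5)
Your proposal is correct and follows essentially the same route as the paper: identify a decomposition of the identity with a coevaluation map satisfying the isotopy (triangle) relations for the pairing $A(-)\otimes A(+)\lra\Bool$, then quote Proposition~\ref{prop_proj_coev} for the projective direction and the retract argument following \eqref{coev_open_sets_3} for the converse. You merely make explicit the verification (Step 1) that the paper states in one line, which is a fine elaboration rather than a different proof.
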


\begin{proof}
 Decomposition of the identity element $\id_L$ has the same properties as the coevaluation map in \eqref{eq_coev_map} and \eqref{coev_open_sets_3}, satisfying the isotopy relations \eqref{cupcap} when combined with an evaluation map.  Proposition~\ref{prop_proj_coev} now implies that if $A(-)$ is a projective semimodule, we can take $\coev_{A(-)}(1)$ as the decomposition of the identity for $L$. Vice versa, having a decomposition of the identity (the coevaluation map satisfying the isotopy relation) implies that semimodule $A(-)$ is projective, see the earlier discussion that follows  Proposition~\ref{prop_proj_coev}. 
 \end{proof}
 
Proposition~\ref{prop_many_char} lists many equivalent characterizations of (finite) projective $\Bool$-semimodules, see also Proposition~\ref{prop_proj_coev}. 

\vspace{0.1in} 

Suppose $L$ has a decomposition of the identity \eqref{eq_id_decomp}. Consider the evaluation $\alpha(L)=(L,L_{\circ})$ with $L$ as the interval language and the circular language $L_{\circ}$ given by 
\begin{equation}
    \omega \in L_{\circ}\ \Leftrightarrow\  \sum_{i=1}^m \alpha_L(\omega u_i)\alpha_L(v_i) = 1.
\end{equation}

\input{figure04-001}

In other words, define $L_{\circ}$ on circular words $\omega\in \Sigma^{\ast}_{\circ}$ by inserting the right-hand side of Figure~\ref{DEC-ID} left relation anywhere on the circle. If we denote the right-hand side by a box labelled $\id$, see Figure~\ref{figure04-001} top, define $L_{\circ}$ via Figure~\ref{figure04-001} bottom. Then relations in Figure~\ref{figure04-002} hold.

\vspace{0.1in} 

\input{figure04-002}

\vspace{0.1in}

Thus, given a regular language $L_I$ such that $A(-)$ is a projective semimodule, there is a canonical circular language $L_{\circ}$ assigned to it as above. The pair $(L_I,L_{\circ})$ gives rise to a regular evaluation $\alpha$. Arc connecting $+$ and $-$ and viewed as element of $A(+-)$ for this evaluation $\alpha$ simplifies into a sum of product terms. 
Consequently, any arc with two outer endpoints in a diagram representing an element of $A(\varepsilon)$ for any sequence $\varepsilon$ can be reduced to a sum of terms which consist of arcs with one out of two endpoints floating (and the other outer). This tells us that the natural ``disjoint union of diagrams'' map $A(\varepsilon)\otimes A(\varepsilon')\lra A(\varepsilon\varepsilon')$ is surjective for any sign sequences $\varepsilon,\varepsilon'$. 

Since $A(-)$ is projective semimodule and the lattice $A(-)^{\wedge}$ is distributive, we can further conclude, see Section~\ref{subsec_semiduality} and Proposition~\ref{prop_inject} there, that there are natural isomorphisms. 
\begin{equation}
    A(\varepsilon)\otimes A(\varepsilon') \ \cong \  A(\varepsilon)\undotimes A(\varepsilon')\ \cong \  A(\varepsilon\varepsilon'), 
\end{equation}
which we can further refine down to the terms for individual signs. For a sequence $\varepsilon=(\varepsilon_1,\ldots, \varepsilon_n)$, $\varepsilon_i\in \{ +,-\}$, there is a decomposition 
\begin{equation}\label{eq_A_factor} 
A(\varepsilon)\ \cong \ A(\varepsilon_1)\otimes \cdots \otimes A(\varepsilon_n). 
\end{equation}  
For this evaluation $\alpha$, the hom spaces in the category $\CCC_{\alpha}$ likewise factor, 
\begin{equation}\label{eq_hom_factor} 
\Hom_{\CCC_{\alpha}}(\varepsilon,\varepsilon')\ \cong \ A(\varepsilon^{\ast}\varepsilon') \ \cong \  A(\varepsilon^{\ast})\otimes A(\varepsilon'), 
\end{equation}
and further factor via \eqref{eq_A_factor}. We can summarize these observations into the following statement. 

\begin{theorem} \label{them_TQFT} 
Suppose that the state space $A(-)$ for a regular language $L_I$ is a projective semimodule (equivalently, lattice $A(-)^{\wedge}$ is distributive). Let $L_{\circ}$ be the associated circular language, via Figure~\ref{figure04-001}, and $\alpha$ the evaluation for $(L_I,L_{\circ})$. Then category $\CCC_{\alpha}$ is a $\Bool$-valued one-dimensional TQFT with defects, with state spaces and hom spaces admitting tensor product decompositions \eqref{eq_A_factor} and \eqref{eq_hom_factor}. 
\end{theorem}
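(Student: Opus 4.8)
The plan is to assemble the theorem from the pieces already developed in Sections~\ref{subsec_semiduality}--\ref{subsec_decomp}, so the proof is mostly a matter of threading the earlier propositions together in the right order. First I would recall that by hypothesis $A(-)$ is a projective $\Bool$-semimodule; by the proposition preceding the theorem (decomposition of the identity exists iff $A(-)$ is projective), $L_I$ admits a decomposition of the identity $\id_L=\sum_i u_i\otimes v_i\in A(+)\otimes A(-)$, and this is precisely the data used to define the circular language $L_{\circ}$ in Figure~\ref{figure04-001}. So $\alpha=(L_I,L_{\circ})$ is a well-defined regular evaluation (regularity of $L_{\circ}$ follows since it is cut out by a finite formula in $\alpha_L$, or one can invoke $\widetilde{L_{\circ}}$ being accepted by an automaton built from $E_{L_I}$, as in Proposition~\ref{prop_c_reg}).

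Next I would verify that inside $\CCC_{\alpha}$ the arc from $+$ to $-$, viewed as an element of $A(+-)$, equals $\sum_i u_i\otimes v_i$, i.e. the coevaluation splits as a sum of ``cup-with-floating-endpoint'' terms. This is the content of Figure~\ref{figure04-002}: the relation \eqref{eq_id_decomp} holds against all test words $x,y$ closing up the diagram on both sides, and since $\CCC_{\alpha}$ is by construction the gligible quotient, two morphisms that pair identically against all closures are equal. The key computation is that inserting the $\id$-box anywhere on a circle reproduces $\alpha_{\circ}$, which is exactly how $L_{\circ}$ was defined, so the relation is consistent globally (not just locally). With the splitting established, any arc in a diagram with both endpoints on the top or bottom boundary can be resolved into a sum of diagrams whose components each have at most one floating endpoint; hence the ``disjoint union'' map $A(\varepsilon)\otimes A(\varepsilon')\to A(\varepsilon\varepsilon')$ is surjective for all sign sequences.

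To upgrade surjectivity to isomorphism I would invoke Proposition~\ref{prop_inject} together with Proposition~\ref{prop_many_char}: since $A(-)$ is projective, the lattice $A(-)^{\wedge}$ is distributive, $A(-)$ (hence $A(+)$, being its dual, and likewise all $A(\varepsilon)$ built from them as reduced tensor powers) is flat, so the map $A(\varepsilon)\otimes A(\varepsilon')\to \Bool^{S\times S'}$ of \eqref{eq-phi-inj} is injective, which forces $A(\varepsilon)\otimes A(\varepsilon')\cong A(\varepsilon)\undotimes A(\varepsilon')$; combined with Proposition~\ref{prop_tensor_product_map} (the image of $t_{\varepsilon,\varepsilon'}$ surjects onto the reduced tensor product, and $t_{\varepsilon,\varepsilon'}$ is an inclusion when one factor is projective) and the surjectivity just proved, all three semimodules coincide, giving $A(\varepsilon)\otimes A(\varepsilon')\cong A(\varepsilon\varepsilon')$. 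Iterating over the letters of $\varepsilon$ yields the factorization \eqref{eq_A_factor}, and \eqref{eq_hom_factor} follows from $\Hom_{\CCC_{\alpha}}(\varepsilon,\varepsilon')\cong A(\varepsilon^{\ast}\varepsilon')$ (rigidity of $\CCC_{\alpha}$) plus \eqref{eq_A_factor} again. Finally, that $\CCC_{\alpha}$ is a genuine Atiyah-style $\Bool$-valued TQFT means precisely that the state-space functor is monoidal, i.e. the tensor-product (Atiyah) axiom holds, which is exactly \eqref{eq_A_factor} together with the lax monoidal structure already present; one should also note $A(\emptyset)\cong\Bool$, which holds for any evaluation.

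The main obstacle I anticipate is the middle step: carefully checking that the local identity-decomposition relation is consistent when imposed everywhere on one-manifolds in $\CCC_{\alpha}$, in particular that the circular language $L_{\circ}$ defined via Figure~\ref{figure04-001} is exactly the one for which the arc resolves as claimed, with no hidden dependence on where the $\id$-box is inserted. Everything else is bookkeeping with the already-proven semimodule facts (flatness $\Leftrightarrow$ distributivity $\Leftrightarrow$ projectivity, and injectivity of $\phi$ for projective factors), but this compatibility is where the real work — essentially the commutativity/associativity verifications hinted at in Figures~\ref{figure04-001} and \ref{figure04-002} — has to be done honestly.
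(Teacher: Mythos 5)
Your proposal is correct and follows essentially the same route as the paper: projectivity of $A(-)$ yields a decomposition of the identity, which both defines $L_{\circ}$ (Figure~\ref{figure04-001}, with consistency from Figure~\ref{figure04-002}) and lets every arc with two outer endpoints be cut, giving surjectivity of the maps $A(\varepsilon)\otimes A(\varepsilon')\to A(\varepsilon\varepsilon')$, after which distributivity/flatness (Propositions~\ref{prop_many_char}, \ref{prop_inject}, \ref{prop_tensor_product_map}) upgrades these to the isomorphisms \eqref{eq_A_factor} and \eqref{eq_hom_factor}. Your extra care about checking the arc-splitting against all closures (including those creating circles) is exactly the point the paper disposes of via the definition of $L_{\circ}$ and Figure~\ref{figure04-002}, so there is no substantive difference in approach.
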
 

For such $\alpha$,
we can think of $\CCC_{\alpha}$ and the state spaces $A(\varepsilon)$ as describing a symmetric monoidal functor from the category $\CCC_{\Sigma}$ of $\Sigma$-decorated one-dimensional cobordisms to the category of $\Bool$-semimodules that satisfies the Atiyah's axioms for a TQFT~\cite{A}, with the caveat of using Boolean semiring $\Bool$ in place of a ground field and dropping the involutory axiom. 

\vspace{0.1in} 

We see that the case when $A(-)$ is a projective semimodule ($A(-)^{\wedge}$ a distributive lattice) is important: 
\begin{itemize}
    \item Language $L_I$ has a decomposition of the identity exactly in this case.
    \item There's a unique circular language compatible with $L_I$ in the sense of giving rise to a genuine $\Bool$-valued one-dimensional TQFT with defects. 
    \item Surjective maps $A(-)\otimes N\lra A(-)\, \undotimes \, N$ are isomorphisms for all semimodules $N$ and the state space $A(\varepsilon)$ for the above theory is both the ordinary $\otimes$ and reduced $\undotimes$ tensor products of $A(+)$, $A(-)$, over all signs in the sequence $\varepsilon$.
\end{itemize}

We call languages with a decomposition of the identity \emph{cuttable languages}. A language is cuttable if and only if it is regular and $A(-)$ is a projective $\Bool$-semimodule (equivalently, $A(-)^{\wedge}$ is a distributive lattice). 
It's natural to pose the following question (we don't know if it's been studied in the theory of regular languages). 

\begin{problem}
  Find other characterizations of cuttable languages $L_I$. What is their significance among regular languages? 
\end{problem}

\begin{remark}
The involutory axiom, restricted to cobordisms from the empty manifold to itself, says that $\alpha(\overline{N})=\overline{\alpha(N)}$, where $\overline{N}$ is given by reversing the orientation of $N$. 
To recover the latter condition, one can extend our setup from $\Bool$ to the ground semiring $\Bool[\sigma]/(\sigma^2=1)$, with $\sigma$ playing a role of involution, and consider evaluation $\alpha$ on words and circular words with values in that bigger semiring subject to the involutory condition. Note that reversing an orientation of an interval with word $\omega$ written on it results in the interval with the opposite word $\omega^{\op}$, so one should have, in particular, $\sigma(\alpha(\omega))=\alpha(\omega^{\op})$, for interval words $\omega$. It might be interesting to compare this setup with various doubling contructions in Connes-Consani~\cite{CC}. 
\end{remark} 

\begin{remark} Universal construction for one-dimensional decorated cobordisms with evaluation $\alpha$ taking values in a ground field $\kk$, see~\cite{Kh3}, gives finite-dimensional state spaces when in the pair $\alpha=(\alpha_I,\alpha_{\circ})$ of noncommutative power series both $\alpha_I$ and $\alpha_{\circ}$ are rational. 
As in our Boolean case, the state space $A(-)$ is finite-dimensional if and only if the noncommutative power series $\alpha_I$ is rational (see~\cite{Kh3} for details). Over a field, with $A(-)$ finite-dimensional, decomposition of the identity always exists (vector spaces $A(-)$ and $A(+)$ are dual), unlike the Boolean case, where we need $A(-)$ to be a projective $\Bool$-semimodule. Consequently, there exists a unique circular power series $\alpha_{\circ}$, defined analogously to the formula in Figure~\ref{figure04-001}, such that the pair $\alpha=(\alpha_I,\alpha_{\circ})$ gives rise to a one-dimensional decorated TQFT subject to Atiyah's axioms (again, dropping the involutivity axiom). 
\end{remark} 

Here are some examples of cuttable languages:  
\begin{itemize}
    \item 
Choose a surjection of monoids $f:\Sigma^{\ast}\lra G$, where $G$ is a finite group. Let $L_I=f^{-1}(1)$ and for each $g\in G$ choose $u_g,v_g\in \Sigma^{\ast}$ with $f(u_g)=g, f(v_g)=g^{-1}$. Let  $S=\{(u_g,v_g)|g\in G\}.$ Then $S$ is a decomposition of the identity for $L_I$. The circular language $L_{\circ}=L_I$ in this case. Example~\ref{ex:LI-LO-asquared} in Section~\ref{section:examples} works out in details the case when $G$ is a cyclic group of order two and $\Sigma$ has one letter. 
\item Let $\Sigma=\{a\}$ and  $L_I=a^na^{\ast}=\{a^n,a^{n+1},\ldots\}$ consists of powers of $a$ starting with $a^n$. We can then take  $\sum_{i=0}^n a^i \otimes a^{n-1}$ to be a decomposition of the identity, with the associated circular language $L_{\circ}$ equals $L_I$ in this case as well. 
\end{itemize}
For an example of a cuttable language $L_I$ with the associated circular language different from $L_I$ consider the state space $A(-)\cong \Bool^2$ with basis $\{x,y\}$ and $\Sigma=\{a,b\}$ with the action, initial state and trace given in this basis by (also see Figure~\ref{LI-not-equal-Lo}) 
\begin{equation}
a=  \begin{pmatrix}
0 & 1 \\
1 & 0 
\end{pmatrix}, 
\hspace{4mm}
b=  \begin{pmatrix}
1 & 1 \\
0 & 0 
\end{pmatrix}, 
\hspace{4mm} 
q_{\init} = \begin{pmatrix} 1 \\ 0 \end{pmatrix}, 
\hspace{4mm}
\tr = \begin{pmatrix} 0 & 1 \end{pmatrix}.
\end{equation}
Operator $a$ is a transposition, $b$ is the projection onto $\Bool x$, the initial state $q_{\init}=x$ and the trace map is $0$ on $x$ and $1$ on $y$. It's easy to check that the state space $A(-)$ is indeed $\Bool^2$ and not a quotient of the latter. The language 
$L_I=(a+b)^{\ast} b (a^2)^{\ast}a+(a^2)^{\ast}a.$  The decomposition of the identity is $\id = a\otimes 1 + 1\otimes a$, see Figure~\ref{LI-not-equal-Lo-02}, and the circular language is given by the evaluation 
\[ \alpha_{\circ}(\omega) \ := \ \alpha_I(a\omega)+\alpha_I(\omega a), \ \ \omega\in \Sigma^{\ast}. 
\] 
The empty word $\emptyset$ is in $\alpha_{\circ}$ but not in $\alpha_I$, since $\alpha_I(\emptyset)=0$ and $\alpha_I(a)=1$. The word $a$ is in $\alpha_I$ but not in $\alpha_{\circ}$, since $\alpha_I(a^2)=0$. Thus, in this example, neither of the two languages $L_I,L_{\circ}$ is a subset of the other. 

\input{LI-not-equal-Lo}

\input{LI-not-equal-Lo-02}

\begin{remark} 
We can consider a weaker notion than a decomposition of the identity.  Call a word $\omega\in \Sigma^{\ast}$  \emph{decomposable} if there exists $S$ as above such that for any $x,y\in \Sigma^{\ast}$ 
\[  \alpha_L(x\omega y) \ = \ \sum_{i=1}^m \alpha_L(xu_i) \alpha_L(v_iy). 
\]
$L$-decomposable words constitute a 2-sided ideal in the monoid $\Sigma^{\ast}$ and generate a 2-sided ideal in the semiring $\Bool[\Sigma^{\ast}]$. Under the homomorphism $\gamma:\Bool[\Sigma^{\ast}]\lra A(+-)$ that sends a word $\omega$ to the arc with two two outer endpoints carrying $\omega$, this ideal is the inverse image of $\beta(A(+)\otimes A(-))$ in $A(+-)$: 
\[  \Bool[\Sigma^{\ast}] \stackrel{\gamma}{\lra} A(+-) \stackrel{\beta}{\longleftarrow} A(+)\otimes A(-), 
\] 
where we can identify
\[ A_{\circleft}(+-)= \gamma(\Bool[\Sigma^{\ast}]), \ \  \ \I_{\uparrow\downarrow}(+-) = \beta(A(+)\otimes A(-)),
\] 
see Section~\ref{subsec-Apm} for these notations. 
\end{remark}

%%%%%%%%%%%%%%%%
% circular
%%%%%%%%%%%%%%%%

\subsection{Circular, interval, and unoriented theories}\label{subsec-circular} 
$\quad$
\vspace{0.05in}

\emph{Circle-only theories and circular automata.}
If the interval language is empty, $L_I=\emptyset$, any interval evaluates to $0$. Hence, any cobordism with a floating endpoint gives the $0$ morphism in $\CCC_{\alpha}$ between its bottom and top boundary. Consequently, in this case from the start we can restrict to the subcategory $\CCC^{\circ}_\Sigma$ of $\Sigma$-decorated oriented cobordisms without floating endpoints. 
This category and the corresponding universal construction (in the linear, not Boolean, case) is considered in~\cite[Section~2]{Kh3}. All endpoints (boundary points) of a cobordism are on its top and bottom boundary (encoded by signed sequences $\varepsilon$ and $\varepsilon'$). 
 
In this category $\Hom(\varepsilon,\varepsilon')\not=0$ if and only if $|\varepsilon|=|\varepsilon'|$, where $|\varepsilon|$ of a sign sequence $\varepsilon$ is the difference of the number of pluses and minuses in it. 

\vspace{0.05in}

Given a circular language $L_{\circ}$, one can do the universal construction with the corresponding evaluation $\alpha_{\circ}$ to get a rigid symmetric monoidal $\Bool$-semilinear category $\CCC_{\alpha_{\circ}}$ and, in particular, define state spaces $A(\varepsilon)$. 
The latter space  is nontrivial only if $|\varepsilon|=0$, and the 
state space for a sequence of $n$ pluses and $n$ minuses  is isomorphic to  $A(+^n-^n)$ via a permutation cobordism. 

State space $A(+-)$ is  a unital associative idempotent semiring spanned over $\Bool$  by arcs with endpoints on $+$ and $-$ and with words $\omega\in \Sigma^{\ast}$. 
Free monoid $\Sigma^{\ast}$ acts on $\Bool$-semimodule $A(+-)$ in two commuting ways, by attaching an interval with $a\in \Sigma$ on it either to the $+$ or to the $-$ endpoint of an arc. 

Take the subset $Q_{+-}\subset A(+-)$ consisting of arcs rather than their $\Bool$-semilinear combinations. This subset is naturally a \emph{deterministic circular automaton} as defined in Section~\ref{subsec_circular} for the language $L_{\circ}$, and it is the minimal such automaton for $L_{\circ}$. 

One can attempt to give examples of \emph{non-deterministic circular automata} for $L_{\circ}$ by analogy with the Section~\ref{subsec_plus_minus} construction of non-deterministic automata via lifting of action of $\Sigma^{\ast}$ on $A(+)$ to an action on its free $\Bool$-semimodule cover. The problem here is that while an action of $\Sigma^{\ast}$ on a $\Bool$-semimodule can be lifted to its free cover, a lifting of two commuting actions might not exist. A similar problem appears when trying to define a non-deterministic ``automaton'' for the entire category $\CCC_{\alpha}$, where $\alpha$ is either a pair of languages $(\alpha_I,\alpha_{\circ})$ or just a circular language. Such an automaton would presumably consist of lifting all hom spaces $\Hom_{\CCC_{\alpha}}(\varepsilon,\varepsilon')$ to free $\Bool$-semimodules in a compatible way preserving the evaluation of closed objects. Equivalently, one would want to have a rigid symmetric monoidal $\Bool$-semilinear category 
$\widetilde{\CCC}_{\alpha}$ with the same objects as $\CCC_{\alpha}$ (signed sequences), homs being \emph{free} $\Bool$-semimodules, and a rigid monoidal $\Bool$-semilinear functor $\widetilde{F}: \widetilde{\CCC}_{\alpha}\lra \CCC_{\alpha}$ which is the identity on objects and surjective on morphisms. 
We do not attempt to study such liftings in the present paper. Non-deterministic circular automata would be related to such liftings of the category $\CCC_{\alpha_{\circ}}$. 

\vspace{0.05in} 

State space $A(+-)$ is naturally a unital associative semialgebra over $\Bool$ under the concatenation multiplication described in Section~\ref{subsec-Apm}. Suppose given a surjective homomorphism $\widetilde{A}\lra A(+-)$ of such semialgebras such that $\widetilde{A}$ is, in addition, a free $\Bool$-semimodule (of finite rank). Semialgebra $\widetilde{A}$ comes with a natural trace, via the composition of the surjection onto $A(+-)$ and the trace on $A(+-)$. The set $\irr(\widetilde{A})$ of irreducible elements (generators or basis elements) of this free $\Bool$-semimodule can then be taken as the set of states of a nondeterministic circular automaton for the 
circular language $L_{\circ}$. Such semialgebras and homomorphisms always exist. For example, one can take for the set of basis elements of  $\widetilde{A}$ the set $A(+-)\setminus \{0\}$ of nonzero elements of $A(+-)$. The multiplication in this basis of  $\widetilde{A}$ is induced by the multiplication in $A(+-)$ and gives a basis element of $\widetilde{A}$ or $0$. This is not an efficient lifting, due to large size of $\widetilde{A}$ compared to that of $A(+-)$.

We leave it to the reader to give a general definition of a nondeterministic circular automaton.

\vspace{0.1in} 

\emph{Interval-only theories.}
Assume that $L_{\circ}=\emptyset$ is the empty language. By the empty language $\emptyset$ we mean the language that contains no words. It's different from the language $\{\emptyset\}$ that contains the empty word and nothing else. In this case we can identity the syntactic monoid $E_{L_I}$ of $L_I$ with the image of the free monoid $\Sigma^{\ast}$ in $A(+-)$ under the map sending $\omega\in \Sigma^{\ast}$ to the arc with endpoints $+,-$ and carrying $\omega$. For a general language $L_I$, the image of $\Sigma^{\ast}$ in $A(+-)$ only surjects onto the syntactic monoid, see Remark~\ref{rmk_monoid}. It's a  natural question to characterize circular languages, given $L_I$, such that the image of $\Sigma^{\ast}$ in $A(+-)$ is $E_{L_I}$. Such circular languages are, in a sense, the most compatible with the interval language $L_I$. 

\vspace{0.1in} 

{\it Unoriented cobordisms.} 
There is an unoriented  version of the category  $\CCC_{\Sigma}$: one-dimensional cobordisms are now unoriented and the objects are numbers  $n\in \Z_+$, counting  the number of  top and  bottom endpoints of the  cobordism. Evaluation  $\alpha$ must  be reflection-invariant: $\alpha_I(\omega^{\op})=\alpha_I(\omega)$, for $\omega\in\Sigma^{\ast}$ and likewise for the circular evaluation: $\alpha_{\circ}(\omega^{\op})=\alpha_{\circ}(\omega)$, for $\omega\in\Sigma^{\ast}_{\circ}.$

The state space $A(1)$ of a single point in this theory comes with a symmetric pairing $(\:\:,\:\:): A(1)\times A(1)\lra \Bool$, nondegenerate in the sense that different elements of $A(1)$ define different functionals on $A(1)$. We can pick the subset $Q_1\subset A(1)$ which consists of images $\langle \omega |$ of all words $\omega\in \Sigma^{\ast}$ rather than their semilinear combinations. There is an action of $\Sigma^{\ast}$ on $Q_1$ via the map 
$\delta: \Sigma\times Q_1\lra Q_1$. This action is self-adjoint with respect to the above pairing $(\:\:,\:\:)$. The pairing restricts to the map, also denoted $(\:\:,\:\:):Q_1\times Q_1\lra \Bool$, with the self-adjointness property, $(\delta_a(q_1),q_2)=(q_1,\delta_a(q_2))$, for states $q_1,q_2\in Q_1$. There is a distinguished initial state $q_{\init}=\langle \emptyset | \in Q_1$. Pairing with $q_{\init}$ determines whether a state $q\in Q_1$ is acceptable.

$Q_1$ with this additional structure gives the minimal \emph{symmetric} or \emph{unoriented} deterministic automaton for the symmetric language $L=L^{\op}$. 
It is straightforward to turn the above structure on $Q_1$ into a proper definition of a deterministic symmetric automaton for a symmetric regular language $L$. One can further study the case when $A(1)$ is a projective semimodule and there's a decomposition of the identity for a symmetric language $L$. 

%%%%%%%%%%%%%%%%%%%%
%
% semimodule automata 
%
%%%%%%%%%%%%%%%%%%%%

\section{Topological theories for semimodules and semimodule automata}\label{sec_semimod_automata}

%%%%%%%%%%%%%%%%%%%
% semimodule pairing 
%%%%%%%%%%%%%%%%%%%

\subsection{Topological theory from a semimodule pairing}
\label{subsec_pairing} 

The category $\Bfmod$ has a natural tensor product, and each object $M$ has a dual object $M^{\ast}$ with a nondegenerate pairing $\coev_M: M^{\ast}\otimes M\lra \Bool$. Despite of that, $\Bfmod$ is not naturally a rigid monoidal category, due to the absence of coevaluation maps $\Bool \lra M \otimes M^{\ast}$ that satisfy suitable isotopy relations, see discussion in Section~\ref{subsec_semiduality}.

Figure~\ref{failure-duality-01} shows four examples of pairings between $M$ and $M^{\ast}$, for various semimodules $M$. In examples (a)-(c) coevaluation map exists, but not in (d). Spanning set for $M$ is $x,y,z$, and $x',y',z'$ for $M^{\ast}$ (except in (a), with just $x,y$ and $x',y'$).

\input{failure-duality-01}

A possible solution is to fix $M$ and enhance the tensor product by artificially adding a ``cup",   
\begin{equation}\label{eq_add_cup}
    M\widetilde{\otimes} M^{\ast} \ :=  \ (M\otimes M^{\ast})\oplus \Bool  \circleft  / \sim .
\end{equation}
That is, we take the direct sum of the tensor product $M\otimes M^{\ast}$ with a free rank one $\Bool$-semimodule generated by a cup diagram and then mod out by the equivalence relation coming from the (semi)bilinear form on this direct sum. The bilinear form is determined by the pairing between $M$ and $M^{\ast}$ plus a choice of evaluation for a circle, equal to the pairing 
\begin{equation} 
\alpha(\bigcirc)=(\circleft,\circleft)\in \Bool
\end{equation} 
of the cup with itself. Given $\alpha(\bigcirc)$, we can then build a rigid category, as follows. 

\vspace{0.05in} 

The pairing between $M$ and $M^{\ast}$ is given by a Boolean matrix once generators of these two semimodules are chosen. Let's start with a Boolean matrix $\mathbf{M}$, possibly infinite. Denote by $S_-$ and $S_+$ the sets of rows and columns, correspondingly, and by $\mathrm{M}(a,b)$ the entry at the row $a\in S_-$ and column $b\in S_+$. To $\mathbf{M}$ assign $\Bool$-semimodules $M_-$ and $M_+$. Semimodule $M_+$ is the subsemimodule of $\Bool S_+$ generated by the columns of the matrix, while $M_-$ is the subsemimodule of $\Bool S_-$ generated by the rows. Pairing $M_+\otimes M_- \lra \Bool$ given by the matrix is nondegenerate. $M_-$ is a subsemimodule of $M_+^{\ast}$, and, if the matrix is infinite, a proper subsemimodule most of the time (likewise for $M_-$). For finite matrices, we can identify $M_-\cong (M_+)^{\ast}$, $M_+\cong (M_-)^{\ast}$. 

\vspace{0.05in} 

Define the category $\CCC_S$  to be the category of oriented one-dimensional cobordisms where inner (floating) endpoints of cobordisms are labelled by elements of $S_+$ and $S_-$. Objects of $\CCC_S$ are finite sequences of signs. A morphism from $\varepsilon$ to $\varepsilon'$ is an oriented one-dimensional cobordism between these zero-manifolds which may have inner endpoints. Inner ``in" endpoints (orientation directed into the manifold) are labelled by elements of $S_+$. Inner ``out" endpoints (orientation directed out of the manifold) are labelled by elements of $S_-$. Figure~\ref{figure05-002} depicts an example.

\input{figure05-002}

Composition of morphisms is given by concatenation. An endomorphism of the empty sequence $\emptyset$ is a disjoint union of oriented intervals $I(a,b)$ with endpoints labelled by elements $a\in S_-$ and $b\in S_+$ and circles. $\CCC_S$ is a rigid symmetric monoidal category.  We call morphisms in this category \emph{$S$-cobordisms}. 

Let $\alpha$ be the evaluation function on $\End_{\CCC_S}(\emptyset)$ that evaluates connected floating components by  
\begin{equation}
    \alpha(I(a,b))=\mathM(a,b), \ \ \alpha(\bigcirc) = \lambda \in \Bool
\end{equation}
and is multiplicative on disjoint unions. 

\vspace{0.05in} 

Define $\CCC_{\alpha}'$ to be the $\Bool$-semilinear category with the same objects as $\CCC_{S}$. Morphisms are $\Bool$-semilinear combinations of morphisms in $\CCC_{S}$, with floating components evaluated using $\alpha$. When composing morphisms, the diagram is simplified if it has any floating components, see examples in Figure~\ref{figure05-004} and \ref{figure05-003}. 

\input{figure05-004}

\input{figure05-003}

Figure~\ref{failure-duality-03} shows an example of converting from a pairing matrix $\mathbf{M}$ between $M_+$ and $M_-$ to the evaluation rules for boundary-labelled intervals. 
 
\input{failure-duality-03} 

\vspace{0.05in} 

Category $\CCC_{\alpha}'$ is an intermediate category introduced for convenience. Define $\CCC_{\alpha}$ as a quotient of $\CCC_{\alpha}$ via the bilinear pairing. For two sequences $\varepsilon$, $\varepsilon'$ and two finite sets $U,V$ of $S$-cobordisms (each with boundary $(\varepsilon',\varepsilon)$) the semilinear combinations are equal in $\CCC_{\alpha}$: 
\begin{equation}
     \sum_{u\in U} u = \sum_{v\in V} v
\end{equation}
if for any way to close $u$ and $v$ using a morphism $\omega: \varepsilon^{\ast}\lra \varepsilon'^{\ast}$ the evaluations are equal: 
\begin{equation}
     \sum_{u\in U} \alpha(\ev \circ (u\otimes \omega) \circ \coev ) = \sum_{v\in V} \alpha(\ev \circ (v\otimes \omega)\circ \coev ), 
\end{equation}
where the closure is depicted in Figure~\ref{figure05-001}.   

\input{figure05-001}

Categories $\CCC_{\alpha}'$ and $\CCC_{\alpha}$ are $\Bool$-semilinear rigid symmetric monoidal categories.
Denote by $A(\varepsilon)= \Hom_{\CCC_{\alpha}}(\emptyset,\varepsilon)$ the $\Bool$-semimodule of homs from the empty  0-manifold 
$\emptyset$ (empty sequence) to $\varepsilon$. 
%(one can also denote the empty $n$-manifold by $\emptyset_n$) 
There are natural identifications of $\Bool$-semimodules
\begin{equation}
    A(+)\ \cong \ M_+, \quad\quad A(-)\ \cong \  M_- ,
\end{equation}
and 
\begin{equation}
    A(+-) \ \cong \ (M_+\otimes M_-)\oplus \Bool \circleft /\sim ,  
\end{equation}
as discussed earlier, see \eqref{eq_add_cup},
where the equivalence relation comes from the bilinear form on $(M_+\otimes M_-)\oplus \Bool \circleft$ defined via the pairing between $M_+$ and $M_-$ and circle evaluation. 

\vspace{0.1in}

Category $\CCC_{\alpha}$ converts nondegenerate pairing between $M_+$ and $M_-$ into a rigid symmetric category. Cup morphism adds coevaluation  between $M_+$ and $M_-$ that's missing from the category $\Bfmod$ unless $M_+, M_-$ are (finite) projective semimodules.  Note that $\CCC_{\alpha}$ does not contain all endomorphisms of $M_+$, only the subsemiring of endomorphisms generated by those that factor through $\Bool$ and by the identity endomorphism. For a similar completion of the entire semiring $\End_{\Bool}(M_+)$ one needs a choice of a trace map from this semiring to $\Bool$. A rigid monoidal completion of the category of all homs in $\Bfmod$ between tensor products of $M_+$ and $M_-$ further requires a choice of a compatible collection of trace maps on endomorphism semirings of all such products. 

\input{failure-duality-02}

\vspace{0.1in} 

A example of extended pairing is shown in Figure~\ref{failure-duality-02}. Here one starts with the pairing matrix $\mathbf{M}$ given in Figure~\ref{failure-duality-03} on the left. This extends to a $4\times 4$ pairing matrix on the direct product $S_+\times S_-$, which is the top $4\times 4$ matrix in each of the four matrices in Figure~\ref{failure-duality-02}.
In $M_+$, respectively $M_-$, there is a relation $x+y=y$, respectively $x'+y'=y'$. 

We add the \emph{cup} morphism $\circleft$ to the four product morphisms in $\Hom_{\CCC_{\alpha}'}(\emptyset,(+-))$ given by elements of $S_+\times S_-$. The pairing between $\circleft$ and elements of $S_+\times S_-$, written in the rightmost column  and row of these matrices comes from $\mathbf{M}$ as well. The bottom right entry (entry $(5,5)$) is the evaluation of the circle. Thus, $\mathbf{M}$ determines all but that entry of the matrix. 

In this example $M_+,M_-$ are projective semimodules and the coevaluation element exists in $M_+\otimes M_-$. It is given by 
\begin{equation}\label{eq_circle_equal}
\circleft = x\otimes y'+x'\otimes y.
\end{equation} 
Circle evaluates to $\lambda=y'(x)+x'(y)=1+1=1.$ This corresponds to the matrix (c) in the figure. Note that column $5$ is the sum of columns 2 and 3, which is just the formula \eqref{eq_circle_equal}. 

In case (b) we're adding a cup element but with a different evaluation $0$ from the one that exists in $M_+\otimes M_-$. Passing then to the state space $A(+-)$, cup $\circleft$ is not an element of $M_+\otimes M_-$ but satisfies nontrivial relations with the elements of the latter: 
\begin{equation}
    \circleft +y\otimes y' = y \otimes y', \ \ \circleft +x\otimes x'= \circleft, 
    \ \ \circleft + x\otimes y'= \circleft + y\otimes x'. 
\end{equation}
We don't know whether, together with the relations in $M_+$ and $M_-$, written above, and the standard isotopy relations, this gives a set of defining relations in the monoidal category $\CCC_{\alpha}$ for this matrix  $\mathbf{M}$ and $\lambda=0$. 
In a similar Example 4 in Section~\ref{subsec_no_labels}, corresponding to a free rank one semimodule $M_+$ and circle evaluation $\lambda=0$, new defining relations appear beyond those extracted from the state space $A(+-)$, see relations (b), (c) in  Figure~\ref{figure05-009}.  

\vspace{0.05in} 

{\it Linear examples of circular regularization.}
A similar ``completion'' to a rigid symmetric monoidal category can be constructed in many other contexts. For a toy example, consider an infinite set $J$ and a pair of vector spaces 
\begin{equation}
V_+=\oplusop{j\in J} \kk v_j, \ \ V_- = \oplusop{j\in J} \kk v^{\ast}_j,
\end{equation} 
where we think of $V_-$ as a restricted dual of $V_+$, via the pairing $v^{\ast}_k(v_j)=\delta_{j,k}$. This pairing is an ``evaluation'' map $\ev : V_+\otimes V_-\lra \kk$. Coevaluation map should have the form 
\begin{equation}
\coev: \     \kk \lra V_+\otimes V_-, \ \ \ 1\longmapsto \sum_{j\in J} v_j \otimes v_j^{\ast},
\end{equation}
which does not make sense, due to  the absence of infinite sums and an attempted composition with the evaluation $\ev$ giving an infinite sum $\sum_{j\in J}1$. 

Instead, we pretend that $\ev\circ \coev(1)=\lambda\in \kk$  and build a rigid symmetric monoidal category $\CCC_{J,\lambda}$ similarly to our setup with semimodules. The category is $\kk$-linear this time, with elements of $V_+$ and $V_-$ written at the \emph{in} and \emph{out} floating endpoints of intervals, evaluation of floating intervals given by the pairing on these two vectors spaces and circles evaluating to $\lambda$, see Figure~\ref{figure05-005}. This can be thought of as a very simple example of regularization of setting the ``dimension'' of an infinite-dimensional space to $\lambda$. 

\vspace{0.05in} 

A similar example can be done with a finite-dimensional $\kk$-vector space $V$ with a basis $\{v_i\}_{i=1}^n$. The dual space $V^{\ast}$ has a basis $\{v_i^{\ast}\}$ and the pairing $V^{\ast}\otimes V\lra \kk$ is $v_j^{\ast}(v_i)=\delta_{i,j}$. We consider a category of oriented one-dimensional cobordisms with floating endpoints labelled $i\in\{1,2,\ldots,n\}$ and evaluate an oriented $(i,j)$-segment to $\delta_{i,j}$, see Figure~\ref{figure05-005} as well.

\input{figure05-005}

Circle, if understood as trace of the identity operator on $V$, should evaluate to $\dim(V)=n\in \kk$. Instead, we evaluate a circle to some $\lambda\in \kk$. 
We can then pass to the corresponding $\kk$-linear quotient category to get a rigid symmetric monoidal category $\CCC_{n,\lambda}$. If $\lambda=n$, it's just the category whose objects are tensor products of $V$ and $V^{\ast}$ and morphisms are all $\kk$-linear maps between these products. When $\lambda\not=n$, the category is different. It's similar to the gligible quotient of the oriented rook Brauer category (see \cite{HdelM} for the rook Brauer algebra), but with endpoints carrying labels.

 %%%%%%%%%%%%%%%%%%
 % no labels
 %%%%%%%%%%%%%%%%%%
 
\subsection{Four categories for dotless cobordisms}\label{subsec_no_labels} 

Consider a special case when there are no labels neither inside nor on floating boundaries of cobordisms. Then there are only two homeomorphism classes of connected floating cobordisms -- an interval and a circle, and four ways to evaluate the pair to elements of $\Bool$ (we denote oriented interval by $\leftarrow$ and oriented circle by $\bigcirc$ below): 
\begin{enumerate}
    \item\label{sec5-case01} $\alpha(\leftarrow )=0$, $\alpha(\bigcirc)=0$, 
    \item\label{sec5-case02} $\alpha(\leftarrow)=0$, $\alpha(\bigcirc)=1$,
    \item\label{sec5-case03} $\alpha(\leftarrow)=1$, $\alpha(\bigcirc)=1$,
    \item\label{sec5-case04} $\alpha(\leftarrow)=1$, $\alpha(\bigcirc)=0$.
\end{enumerate}
This results in four rigid monoidal categories $\CCC_{\alpha}$, which we consider separately. Categories in (1)-(3) have very simple structure; in case (4) we don't know an explicit identification of hom spaces in the category nor a complete set of defining relations. 

\vspace{0.05in} 
\eqref{sec5-case01}
$\alpha(\leftarrow)=\alpha(\bigcirc)=0$. Every closed cobordism evaluates to $0$ and the bilinear pairing on one-manifolds with outer boundary a given non-empty sign sequence $\varepsilon$ is trivial. Consequently, each state space $A(\varepsilon)=0$
except that for the empty sequence $A(\emptyset_0)=\Bool[\emptyset_1]$, so that the corresponding state space is generated by the empty one-dimensional cobordism $\emptyset_1$ (which evaluates to $1$). 
The hom spaces are given by  
\[\Hom(\varepsilon,\varepsilon')\cong
\begin{cases}
 \Bool  & \varepsilon=\varepsilon'=\emptyset_0, \\
 0 & \mathrm{otherwise}. 
\end{cases} 
\]
The identity object $\one = \emptyset_0$ has $\Bool$ as the endomorphism semiring and all other objects are trivial. 

\vspace{0.05in} 

\eqref{sec5-case02}
$\alpha(\leftarrow)=0, \alpha(\bigcirc)=1$. Any cobordism with non-empty floating boundary evaluates to $0$ upon closure, due to the presence of an interval in the resulting floating cobordism. Consequently, there are relations in Figure~\ref{figure05-007} left and nontrivial diagrams in hom spaces $\Hom(\varepsilon,\varepsilon')$ are those that have no floating endpoints and consist of a pairing between the outer endpoints via connecting oriented intervals. A pair of any two such diagrams evaluates to $1$ upon closure. Consequently, any two diagrams without floating endpoints are equal in $\CCC_{\alpha}$, and the hom space $\Hom(\varepsilon,\varepsilon')$ is one-dimensional when an orientation-matching pairing of endpoints exists and trivial otherwise: 
\[\Hom(\varepsilon,\varepsilon')=
\begin{cases}
 \Bool [\emptyset_1] & \mathrm{if} \ h(\varepsilon)=h(\varepsilon'), \\
 0 & \mathrm{otherwise}. 
\end{cases} 
\]
Here $h(\varepsilon)$ is the number of $+$ signs in $\varepsilon$ minus the number of $-$ signs. These additional relations are shown in Figure~\ref{figure05-007} on the right.  
 
\vspace{0.1in} 

\input{figure05-007}

\eqref{sec5-case03}  
$\alpha(\leftarrow)=1, \alpha(\bigcirc)=1$. In this case any floating 1-manifold evaluates to $1$ and any two diagrams with the same signed endpoints are equal. Consequently, $A(\varepsilon)=\Bool$, $\Hom(\varepsilon,\varepsilon')=\Bool$ for any signed sequences. Objects $\varepsilon$ are isomorphic, over all sequences $\varepsilon$, and isomorphic to the identity object $\one=\emptyset_0$. A set of defining relations for this monoidal category is shown in Figure~\ref{figure05-008}. 

\vspace{0.1in} 

\input{figure05-008}

\eqref{sec5-case04}  
$\alpha(\leftarrow)=1, \alpha(\bigcirc)=0$. Computing the bilinear form on $A(+-)$, on $A(++--)$, and the pairing $A(++-)\otimes A(+--)\lra \Bool$ gives us relations shown in Figure~\ref{figure05-009}. We don't know a full set of defining relations for this category.

\input{figure05-009}

 %%%%%%%%%%%%%%%%%%
 % compatibility
 %%%%%%%%%%%%%%%%%%
 
\subsection{Measuring language complexity and  compatibility of languages}\label{subsec_measuring} 

For a regular language $L$ we can measure its complexity as 
\[
c(L) \ := \ \log_2|A_L(-)|.
\]
Recall that $A_L(-)$ is the state space of $-$ for the language $L$. The equality $|A_L(-)|=|A_L(+)|$ shows that  $c(L^{\op})=c(L)$. 

The state space $A_L(-)$ is a $\Bool$-semimodule, and it may be hard to compute its cardinality given the language. In the special case when $A_L(-)$ is a free semimodule on $n$ generators, the complexity $c(L)=n$. An upper bound on $c(L)$ is the minimal number of generators of $A_L(-)$ (the cardinality of $\irr(A_L(-))$).  

\vspace{0.1in} 

Consider two regular languages $L_1,L_2\subset \Sigma^{\ast}$. To introduce a measure of similarity between them, consider a modification of the category $\CCC$ similar to the one in the previous section. Namely, allow inner (floating) endpoints with ``in'' orientation of two types, corresponding to our two languages, while the inner endpoints with the ``out'' orientation have only one type, see 
Figure~\ref{figure05-006} for pictures of endpoints and evaluation of floating intervals in that category 
and 
Figure~\ref{generalized-ex-02} for an example of a more complicated morphism. 

\input{figure05-006}

\input{generalized-ex-02} 

More precisely, we first consider a ``free'' category of decorated cobordisms with those generators and subject to isotopy relations. 
Then define an  evaluation $\alpha$ using $L_1, L_2$ and an arbitrary circular language $L_{\circ}$, see Figure~\ref{figure05-006} (Language $L_{\circ}$ is used to evaluated circle diagrams). $\Bool$-semilinear combinations of cobordisms are allowed. Evaluation $\alpha$ allows to remove floating components from a diagram. After that, pass to the quotient category, denoted $\CCC_{\alpha}$, via the universal construction. The state space $A_{\alpha}(+)$ in this category is spanned by two types of diagrams: words $\omega$ on an interval with floating endpoint labelled $1$ or $2$. State space $A_{\alpha}(-)$ is spanned by only one type of diagrams. Again, $A_{\alpha}(-)\cong A_{\alpha}(+)^{\ast}$ and  $|A_{\alpha}(-)|=|A_{\alpha}(+)|$. 
These state spaces do not depend on the choice of $L_{\circ}$, but state spaces for sequences that contain both $+$ and $-$ depend on $L_{\circ}$.   

\begin{remark} One way to define a deterministic automaton for a pair of languages $(L_1,L_2)$ is as an automaton with two subsets of accepting states $Q^1_{\t}$ and $Q^2_{\t}$, one for each language. Word $\omega\in L_i$ if and only if, after reading it, the automaton is in a state in $Q^i_{\t},$ $i=1,2$. Forgetting one of these subsets produces an automaton for the other language. Similar definitions work for non-deterministic automata, etc. Variations on this definition include 
\begin{itemize}
    \item Having two initial states (or sets of initial states) for the two languages but the same set $Q_{\t}$ of accepting states. 
    \item Allowing different initial states $q_{\init}^1,q_{\init}^2$ and different subsets of accepting states $Q_{\t}^1,Q_{\t}^2$ for the two languages. 
\end{itemize}
\end{remark} 

{\it Joint complexity.}
Define the joint complexity $c(L_1,L_2)$ of languages $L_1,L_2$ as the logarithm of the size of state space $A_{1,2}(-)$ in that category.
\begin{equation} 
c(L_1,L_2) \ := \  \log_2 |A_{1,2}(-)|.
\end{equation} 
Define the relative complexity of $L_2$ given $L_1$ as \begin{equation}
    c(L_2|L_1) \ := \ c(L_1,L_2) - c(L_1)\ = \ \log_2\biggl(\frac{|A_{1,2}(-)|}{|A_1(-)|}\biggr).
\end{equation} 
Clearly, $c(L_2|L_1)\ge 0$ for any regular $L_1,L_2$. It's easy to produce languages $L_2\not= L_1$ with $c(L_2|L_1)=0$, for instance by taking any $L_1$ and the empty language $L_2$. With any language $L$ we can associate a set of languages $\{L'\}$ such that $c(L'|L)=0$. 

\vspace{0.05in} 

In our definition of the category, the labels are different at the ``out'' floating endpoints. With our conventions for the automata (which is build from the space $A(-)$ with the action of $\Sigma^{\ast}$), this means that the automaton first reads the words fully and then finds out whether the language is $L_1$ or $L_2$ at the end of the word (at the ``in'' endpoint). Alternatively, we can redefine the category by decorating only ``out'' floating endpoints with $1$ or $2$. Then whether the word being tested is in language $L_1$ or $L_2$ is known at the start. One can flip between these two types of categories and corresponding automata by reversing orientations of cobordisms and passing to opposite languages $L_1^{\op},L_2^{\op}$. A third possibility is to have labels $1$ and $2$ at both types of endpoints, but then one needs an additional evaluation function for words on floating intervals that carry opposite labels $1,2$ at their two endpoints (one possibility is to evaluate all such cobordisms to $0$). 

\vspace{0.05in} 
    
Going back to the case of a single pair $L=(L_I,L_{\circ})$ of a regular language $L_I$ and a regular circular language $L_{\circ}$, we can introduce a measure of complexity that $L_{\circ}$ adds to $L_I$. For that, consider the state space $A(+-)$ and the $\Bool$-subsemimodule in it spanned by product diagrams.  The natural map $A(+)\otimes A(-)\lra A(+-)$ has the property that its image (the semimodule above) surjects onto the reduced tensor product $A(+)\undotimes A(-)$. One can define the \emph{relative complexity} of a circular language $L_{\circ}$ given a language $L_I$ by 
\begin{equation}
    c(L_{\circ}|L_I)\ := \ \log_2|A(+-)| -  \log_2 |A(+)\undotimes A(-)| \ = \ \log_2\biggl(\frac{|A(+-)|}{|A(+)\undotimes A(-)|}\biggr).
\end{equation}

\vspace{0.05in} 

{\it Labelling both endpoints and dots.}
So far we have given several examples of closing a regular language or a $\Bool$-semimodule data into a rigid symmetric semilinear category: 
\begin{itemize}
    \item Category $\CCC_L$ assigned to a pair $L=(L_I,L_{\circ})$ of a regular language and a regular circular language. Oriented 1-cobordisms are decorated by dots inside the cobordism carrying labels from the set $\Sigma$ of letters. 
    \item Category $\CCC_{\alpha}$, where $\alpha=(\mathbf{M},\lambda)$, associated to a duality pairing between semimodules and a circle evaluation $\lambda$. Floating ends oriented 1-cobordisms are decorated by elements of sets $S_+$, $S_-$ of generators of dual semimodules $M_+,M_-$ and 1-manifolds carry no dots. 
    \item Categories for a pair of regular languages (and a circular language). One-manifolds are decorated by $\Sigma$-labelled dots and either ``in'' or ``out'' floating endpoints are labelled by $1,2$. 
\end{itemize}

It is straightforward to unify these examples. One would consider oriented one-manifold cobordisms with dots labelled by elements of $\Sigma$ and floating endpoints (depending on their orientation) labelled by elements of finite sets $S_+,S_-$. Evaluation $\alpha$ is a map from diffeomorphism classes of possible floating connected one-manifolds with these decorations to $\Bool.$ When a component is a circle, its evaluation depends only on the circular word written on it. When a component is an interval, its evaluation depends on the word $\omega$ on it and labels of the two endpoints (elements of $S_+$ and $S_-$, correspondingly). 

\input{sect05-001}

An evaluation of this form is determined by a matrix of languages and a circular language, see Figure~\ref{sect05-001}. Evaluation $\alpha$ on $(i,j)$-intervals are given by a language $L_{ij}$, and on circles by the circular language $L_{\circ}$. 

Such a data is called \emph{regular} when state spaces $A(\varepsilon)$ in the corresponding theory are finite $\Bool$-semimodules, for all sign sequences $\varepsilon$.  Regularity is equivalent to finiteness of the state space $A(+-)$, and to all languages $L_{ij},L_{\circ}$ being regular. One can ask when does this data correspond to a $\Bool$-valued TQFT so that $A(\varepsilon)^{\wedge}$ are finite distributive lattices and there are isomorphisms $A(\varepsilon)\otimes A(\varepsilon')\cong A(\varepsilon\varepsilon')$. It's straightforward to extend the notion of a decomposition of the identity to this setup.

 %%%%%%%%%%%%%%%%%%%%%
 %
 % EXAMPLES 
 %
 %%%%%%%%%%%%%%%%%%%%%
 
\section{Examples} 
 \label{section:examples}
 
This section provides several examples for various notions and constructions introduced introduced in this paper.

\begin{example}
\label{ex:LI-LO-asquared}

Let $\Sigma=\{ a\}$ and $L=(L_I,L_{\circ})$ with $L_I = L_{\circ}=(a^2)^*$. The languages are invariant under adding or removing $a^2$ from a word in any position, so there's a skein relation shown in Figure~\ref{LILo-asquared-03} left, where a dot with label $n$ denotes $a^n$. 

\input{LILo-asquared-03}

Modulo this relation, given a sequence $\varepsilon$, for a set of spanning vectors in $A(\varepsilon)$ can take all possible one-manifolds with outer boundary $\varepsilon$, without floating components, and with at most one dot on each component (strand). 
In particular, 
the spanning vectors for $A(+)$, $A(-)$ (two for each space), and for $A(+-)$ (six vectors) are given in Figure~\ref{LILo-asquared-01}. Pairing between $A(+)$ and $A(-)$ is given by the identity matrix, see Figure~\ref{circ-automata-03}, 
so that $A(+),A(-)$ are free $\Bool$-semimodules of rank two, with the bases shown in that figure. 

\input{LILo-asquared-01}

The matrix of the pairing between $A(+)$ and $A(-)$ is given in Figure~\ref{circ-automata-03}, 
showing that $A(+)$ and $A(-)$ are each a free $\Bool$-semimodule on those two generators. 

\input{circ-automata-03}

Pairing between $A(+^n)$ and $B(+^n)$ in the product sets of generators will be a $2^n\times 2^n$ matrix with a single $1$ in each row and column and $0$ everywhere else. 
Case $n=2$ (pairing $A(++)\times A(--)\lra \Bool$) is shown in Figures~\ref{circ-automata-11} and~\ref{circ-automata-12}, with the pairing given by the permutation matrix. 
Consequently, $A(+^n)\cong A(+)^{\otimes n}$ is a free $\Bool$-semimodule on $2^n$ product generators, and likewise for $A(-^n)\cong A(-)^{\otimes n}$. 

\input{circ-automata-11}

\input{circ-automata-12}

\vspace{0.05in} 

Matrix of the pairing on $A(+-)$ is given in Figure~\ref{LILo-asquared-02}, on the six diagrams that span the space. We see that column 5 is the sum of columns 1 and 4, shown as the middle relation in Figure~\ref{LILo-asquared-03}. 
Since the alphabet has a single letter, we don't need to label the dots.

Adding a dot at one of the endpoints of that relation gives us the relation that column 6 is the sum of columns 2 and 3. Columns 1 through 4 generate $A(+-)$ and have no relations on them. Consequently, $A(+-)$ is a free rank four $\Bool$-semimodule with the basis given by the first four columns.

\input{LILo-asquared-02}

The center relation in Figure~\ref{LILo-asquared-03} allows to ``cut" any component in any position and reduce any cobordism to a linear combination of cobordisms where each component has exactly one outer boundary point. This is exactly the decomposition of the identity for $L_I$, and the circular language $L_{\circ}$ is compatible with this decomposition and determined by it, as in Figure~\ref{figure04-001}. For a sequence $\varepsilon$ of length $n$ this gives $2^n$ vectors in the state space $A(\varepsilon)$: an oriented segment for each sign in the sequence $\varepsilon$, with at most one dot on it. The pairing between $A(\varepsilon)$ and $A(\varepsilon^{\ast})$ is perfect on these sets of vectors (given by the permutation matrix, with a single 1 in each row and column).

Thus, $A(\varepsilon)$ is a free $\Bool$-semimodule of rank $2^{|\varepsilon|}$ with that basis, for each sequence $\varepsilon$. The category $\CCC_{\alpha}$ for this pair $(L_I,L_{\circ})$ has two relations shown in Figure~\ref{LILo-asquared-03} left and center, and, together with evaluation of floating intervals with at most one dot, that's a complete set of defining relations in  monoidal category $\CCC_{\alpha}$. Evaluation of circles follows from the evaluation of intervals and Figure~\ref{LILo-asquared-03} relations. The category $\CCC_{\alpha}$ determines a $\Bool$-valued TQFT with $A(+),A(-)$ free rank two $\Bool$-semimodules. 
\end{example}

\begin{example}
\label{ex:LI-LO-a-asquared}
Let $\Sigma=\{ a \}$ and $L=(L_I,L_{\circ})$, with $L_I=(a^2)^*$ and $L_{\circ}=a(a^2)^*$. Again, adding or removing two dots on a strand does not change the evaluation, implying relation (3) in Figure~\ref{LI-LO-a-asquared-02}.  The spanning vectors for $A(+)$, $A(-)$, and $A(+-)$ are given in Figure~\ref{LILo-asquared-01}. They are the same as the spanning sets for the previous  Example~\ref{ex:LI-LO-asquared}. State spaces $A(+)$ and $A(-)$ are free $\Bool$-semimodules of rank two each, and $A(+^n),A(-^n)$ are free semimodules, each of rank $2^n$.
Indeed, these spaces depend only on the interval language $L_I$, which is the same as in the previous example. 

\input{LI-LO-a-asquared-01}

\input{LI-LO-a-asquared-02}

Matrix of the pairing between $A(+-)$ and itself is given in Figure~\ref{LI-LO-a-asquared-01}. 
Elements $x_1,\ldots, x_6$ are irreducible  in $A(+-)$, since no column is a $\Bool$-semilinear combination of other columns. 
However, these elements satisfy nontrivial linear relations, with the minimal ones written below. 
\begin{eqnarray}
   x_1+ x_5 & = & x_4 + x_5, \label{eq_ex2_2} \\
   x_2 + x_6 & = & x_3 + x_6,
   \label{eq_ex2_3} \\
   x_1+x_2+x_3+x_4 & = & x_5 + x_6, \label{eq_ex2_1} \\
   x_1+x_2+x_3+x_5 & = & x_5+x_6. \label{eq_ex2_4}
\end{eqnarray}
Relations (\ref{eq_ex2_1}),(\ref{eq_ex2_2}) and (\ref{eq_ex2_4}) are shown in Figure~\ref{LI-LO-a-asquared-02} as relations (I), (II), and (III), respectively.
These relations can be found directly. It's also possible to find and visualize them by assigning a hypergraph to the Boolean matrix in Figure~\ref{LI-LO-a-asquared-01}. Vertices of the hypergraph are labelled by rows 1-6 of the matrix, see Figure~\ref{LI-LO-a-asquared-05}, and to each column one assigns a (generalized) edge.  If a column contains two ones, draw an edge between corresponding rows (edges labelled $x_1,x_2,x_3,x_4$). If a column contains three or more ones, draw a generalized edge (hyperedge), that is, a subset of the set of vertices. For the present matrix, the hyperedges are labelled by $x_5$ and $x_6$. In a square Boolean matrix, there are nontrivial relations on columns as long as the matrix is not a permutation matrix. Whenever two subsets of hyperedges cover the same set of vertices, corresponding sums of $x$'s are equal. For the present example,  $x_1+x_5=x_4+x_5$, since the two sums contain the same union of  vertices $\{2,4,5,6\}$. The remaining three relations among  \eqref{eq_ex2_1}-\eqref{eq_ex2_4} follow likewise. 

\input{LI-LO-a-asquared-05}
 
With six irreducible elements and non-trivial relations on them,  $A(+-)$ is not a free $\Bool$-semimodule. Bringing monoidal structure into play, relations \eqref{eq_ex2_2} and \eqref{eq_ex2_3} are equivalent modulo relation (IV) in Figure~\ref{LI-LO-a-asquared-02}, by adding a dot at an endpoint to all terms in the relation, see Figure~\ref{LI-LO-a-asquared-04}. 

We don't know a full set of defining relations in  $\CCC_{\alpha}$ for this evaluation $\alpha$. The set of eight relations:  four relations (I)-(IV), in Figure~\ref{LI-LO-a-asquared-02} and the four evaluation via $\alpha$ of an interval and a circle with at most one dot allow to evaluate all floating components and imply all relations in $A(+-)$ and in  $A(+^n),A(-^n)$ for $n\ge 1$.  

\input{LI-LO-a-asquared-04}
\end{example}

\begin{example}
\label{ex:LI-aa-Lo-all}
Let $\Sigma=\{a\}$ and $L=(L_I,L_{\circ})$ with $L_I=(a^2)^{\ast}=\emptyset\sqcup a^2\sqcup a^4\sqcup \ldots$ and $L_{\circ}=a^{\ast}=\Sigma^{\ast}$. In this example, $\Sigma$ has a single letter and $L_{\circ}$ contains all possible words. Minimal deterministic automata for the two languages are shown in Figure~\ref{circ-automata-01}. 

\vspace{0.05in} 

The interval language $L_I$ in Examples~\ref{ex:LI-LO-asquared}-\ref{ex:LI-aa-Lo-all} is the same, and the state spaces $A(-)$ and $A(+)$ in these three examples are isomorphic (as well as state spaces $A(+^n)$ and $A(-^n)$, for all $n$, which are free $\Bool$-semimodules of rank $2^n$.)  
  
\input{circ-automata-01}

The category $\CCC_{\alpha}$ has a monoidal relation that two dots on a strand can be removed, see Figure~\ref{circ-automata-02} left. 
%Consequently, $A(+)$ and $A(-)$ each have two-element bases, a suitably oriented strand with no dots or one dot, see Figure~\ref{circ-automata-02}. 

\input{circ-automata-02}

The matrix of the pairing between $A(+)$ and $A(-)$ is given in Figure~\ref{circ-automata-03}, 
showing that $A(+)$ and $A(-)$ are each a free $\Bool$-semimodule on those two generators. Likewise, $A(+^n)$ and $A(-^n)$ are free $\Bool$-semimodules on $2^n$ generators each.

%Pairing between $A(+^n)$ and $B(+^n)$ in the product sets of generators will be a $2^n\times 2^n$ matrix with a single $1$ in each row and column and $0$ everywhere else. Consequently, $A(+^n)\cong A(+)^{\otimes n}$ is a free $\Bool$-semimodule on $2^n$ product generators, and likewise for $A(-^n)\cong A(-)^{\otimes n}$. 

\input{circ-automata-04}

Spanning set of elements for $A(+-)$ is given in Figure~\ref{circ-automata-05}, and the spanning set of elements for $A(-+)$ is given in Figure~\ref{circ-automata-06}.  

\input{circ-automata-05}

\input{circ-automata-06}

\vspace{0.1in} 

Consider now the bilinear form on $A(+-)$. Relation in Figure~\ref{circ-automata-02} on the left allows to reduce $A(+-)$ to six generators. Among these are four generators that are a pair of intervals with $0$ or $1$ dots on each and two generators given by an arc with at most one dot (see Figures~\ref{circ-automata-05} and \ref{circ-automata-06}). The form is written in Figure~\ref{circ-automata-07}, and the matrix is symmetric. 

\vspace{0.05in} 

\input{circ-automata-07}

Denote these generators, corresponding to column vectors, by $x_1,\ldots, x_6$. We see that the following additive relations on columns hold 
\begin{eqnarray}
  x_5 & = & x_5 + x_4 , \label{eq_A_1}  \\
  x_5 & = & x_5 + x_1, \label{eq_A_2}\\
  x_6 & = & x_6 + x_2, \label{eq_A_3}\\
  x_6 & = & x_6 + x_3, \label{eq_A_4}\\
  x_5+x_6 & = & x_1 + x_2 + x_3 + x_4 , \label{eq_A_5}\\
  x_5+x_3+x_2 & = & x_6 + x_4 + x_1. \label{eq_A_6}
\end{eqnarray}
Each of these relations in $A(+-)$ corresponds to a skein (or monoidal) relation in $\CCC_{\alpha}$. Within the monoidal structure, relations \eqref{eq_A_1}-\eqref{eq_A_4} are equivalent and obtained from each other by adding dots near the endpoints of strands. Relation \eqref{eq_A_1} is shown in Figure~\ref{circ-automata-08}. Denoting the sum $x_5+x_3+x_2$ by a box labelled $y_1$ on a strand, relation \eqref{eq_A_6} can be written as in Figure~\ref{circ-automata-09}, saying that a dot next to box $y_1$ equals $y_1$. Relation \eqref{eq_A_5} is shown in Figure~\ref{circ-automata-10}. 

\input{circ-automata-08}

\input{circ-automata-09}

\input{circ-automata-10}

So far we have four independent, monoidal relations in $\CCC_{\alpha}$: the relation in Figure~\ref{circ-automata-02} on the left and relations \eqref{eq_A_1}, \eqref{eq_A_6}, \eqref{eq_A_5}, in addition to the four evaluation formulas for an interval and circle with at most one dot. We do not know a complete set of relations for $\CCC_{\alpha}$. 

\end{example}

\begin{example}
\label{ex:2nd-last-b}

Consider the interval language $L_I=(a+b)^{\ast}b(a+b)$ accepting those words whose second from last letter is $b$. For a spannning set in  $A(-)$  we can take 
$\{\langle \emptyset |$, $\langle a |$, $\langle b|$, $\langle aa|$, $\langle ba|$, $\langle ab|$, $\langle bb|\}$, see Figure~\ref{ex-2nd-last-basis}. Recall that we denote a word $w$ on a  strand with $-$ boundary (and the second boundary point an ``out" arrow) by $\langle w |$ and view it as an element of $A(-)$. 

\input{ex-2nd-last-basis}

 Similarly, we take $\{ |\emptyset \langle$,  $|a \langle$, $|b \langle$, $|aa \langle$, $|ab\langle$ , $|ba \langle$, $|bb \langle\}$ as 
 a spanning set of $A(+)$, viewed as a $\Bool$-semimodule, see Figure~\ref{ex-basis-up}.
 Here a word $w$ on a strand with $+$ boundary and the other boundary point an ``in" arrow is denoted $|w\langle$ a viewed as an element of $A(+)$. 

\input{ex-basis-up}

That these are spanning sets for $A(-)$ and $A(+)$, respectively, we leave as a straightforward exercise for the reader. That the same sets of words are chosen for generators of $A(+)$ and $A(-)$  is due to the relative simplicity of $L_I$ and us not trying to pick a minimal spanning set for each of these two  $\Bool$-semimodules, see Figure~\ref{ex-2nd-last}. 

%\begin{tikzpicture}
%\end{tikzpicture}

\input{ex-2nd-last}

\input{ex-2nd-last-03}

The state space $A(-)$ and the unique minimal DFA $Q_-$ for $L_I$ are shown in Figure~\ref{ex-2nd-last-03}. We have
\begin{equation}
A(-) = \Bool x \oplus \Bool y \oplus \Bool z/(y=y+x, \hspace{2mm} z=z+x),
\end{equation}
which consists of $5$ elements $\{ 0,x,y,z,y+z\}$, with $x,y,z$ irreducible. Element $0$ is the only element of $A(-)$ not in $Q_-$. 

\input{ex-2nd-last-02}

The minimal free cover is $\Bool^3\lra A(-)$, mapping generators to $x,y,z$. There is more than one way to lift the action of $\Sigma^{\ast}$ from $A(-)$ to $\Bool^3$, and each lifting gives rise to a different minimal nondeterministic FA for $L_I$. Two of these minimal NFA are shown in Figure~\ref{ex-2nd-last-02}. Since $b(y)=y+z=x+y+z$ in $A(-)$, upon the lifting we have two choices for the corresponding action of $b$ on that generator of the free module. The second automaton has additional $b$ arrow from $y$ to $x$ to show that difference. Since $b(x)=y=x+y$, that also gives two choices for lifting the action of $b$ on $x$ to $\Bool^3$, with the  difference also shown in the two automata. It's straightforward to classify all possible liftings of $\Sigma^{\ast}$ action to $\Bool^3$ and the corresponding minimal DFA. Since $q_{\init}=x$ and $x$ lifts to only one element of $\Bool^3$, there are no additional choices for the initial subset $Q_{\init}$ of the minimal NFA. 

\vspace{0.05in} 

The language $L_I^{\op}=(a+b)b(a+b)^\ast$ consists of words
such that the second letter is $b$. The minimal  deterministic finite-state automaton (naturally denoted $Q_+$) for $L_I^{\op}$ is given in Figure~\ref{ex-2nd-last-04}. The corresponding 
$\Bool$-semimodule $A(+)$ is described by   
\begin{equation}
A(+) = \Bool x' \oplus
\Bool y' \oplus
\Bool z'/(z'= z'+x', \hspace{2mm} z'= z'+y')
\end{equation}
and consists of $5$ elements $\{ 0,x',y',z',x'+y'\}$.
Note that $x'+y'$ is not in $Q_+$. Element $0$ is in $Q_+$, since language $L_I^{\op}$ has \emph{unrecoverable} words, that is, words $\omega$ such that $\omega\omega'$ is not in the language for any word $\omega'$.  

\input{ex-2nd-last-04}

From Figure~\ref{ex-2nd-last}, we see that the sum of $z'$ with anything is $z'$, giving relations 
$z'= x'+z'$ and $z'= y'+z'$. 
We also see that $x'+y'\not=x',y'$, which implies $x'+y'$ is an element of $A(+)$ not in $Q_+$ (set of states of the minimal automaton for $L_{\op}$). Note that,  
although $|A(-)|=|A(+)|$ and $\alpha:A(-)\times A(+)\rightarrow \Bool$ is a nondegenerate pairing,  
$A(-)$ and $A(+)$ are not isomorphic as $\Bool$-semimodules. 

In this example, we restricted to $L_I$, state spaces $A(-),A(+)$ and associated automata and did not consider any circular language and the corresponding category $\CCC_{\alpha}$. 

\end{example}

\begin{example}
\label{ex:circular-aa} {\it A circular automaton.}
For the alphabet  $\Sigma=\{ a,b\}$ consider 
the circular language 
%$L=(L_I,L_{\circ})$ with {$L_I$= \color{red}???????  Pick one } 
of all circular words that have two letters $a$ next to each other,  
\[
L_{\circ}=\{ \cdots aa\cdots\} =(a+b)^{\ast}aa(a+b)^{\ast}+a(a+b)^{\ast}a.
\]

\input{circ-aa-01}

Circular automaton for $L_{\circ}$ is given in Figure~\ref{circ-aa-01}. Circular automaton for the complementary language $L_{\circ}^{\comp}$ which consists of all circular words without $aa$ as a subword is shown in Figure~\ref{circ-aa-02}.

%Given $\Sigma=\{ a,b\}$ with  $L_{\circ}$ containing all words without $aa$, the corresponding circular automaton is Figure~\ref{circ-aa-02}. 

\input{circ-aa-02}

Figure~\ref{circ-aa-00} shows how a typical circular automaton on a 2-letter alphabet $\{a,b\}$ looks near the initial state $q_{\init}$. 

\input{circ-aa-00}

\end{example}

\begin{example}
\label{ex:Lcirc-abab}
Let $\Sigma=\{ a,b\}$ and circular language $L_{\circ}$ consist of words of the form $\cdots a \cdots b \cdots a \cdots b\cdots $. 
The corresponding minimum circular automaton is given in Figure~\ref{circ-abab-01}. 

\input{circ-abab-01}
 
\end{example}

Table~\ref{table:ex-summary} lists the languages for each example in this section.

\begin{table}
    \centering
    \begin{tabular}{||c|c|c||}
    \hline 
    \hline 
    Example & $L_I$ & $L_{\circ}$ \\
    \hline 
    \hline 
    Example~\ref{ex:LI-LO-asquared} & $(a^2)^*$ & $(a^2)^*$ \\  
    Example~\ref{ex:LI-LO-a-asquared} & $(a^2)^*$ & $a(a^2)^*$ \\ 
    Example~\ref{ex:LI-aa-Lo-all} & $(a^2)^*$ & $a^{\ast}$ \\
    Example~\ref{ex:2nd-last-b} & second from last letter is $b$ & -- \\    
    Example~\ref{ex:circular-aa}     & --  & has two consecutive $a$ \\         
    Example~\ref{ex:Lcirc-abab} & -- & has $\cdots a \cdots b \cdots a \cdots b\cdots $ \\ 
    \hline 
    \hline 
    \end{tabular}
    \caption{Languages for examples in Section~\ref{section:examples}. }
    \label{table:ex-summary}
\end{table}

%\clearpage

 %   \begin{enumerate}
 %       \item QA: Quantum Algebra,
  %      \item CS.FL: Computer Science, Formal Languages,
  %      \item MP: Mathematical Physics, 
   %     \item CT: Category Theory. 
  % \end{enumerate}

%%%%%%%%%%%%%%%%%%%%%
%%
%%   REFERENCES 
%%
%%%%%%%%%%%%%%%%%%%%

%\addcontentsline{toc}{section}{References}
%\def\refname{}

\bibliographystyle{abbrv} 

\bibliography{top-automata}

\end{document}